\documentclass[11pt,a4paper]{amsart}

%packages

\usepackage[utf8]{inputenc} % allow utf-8 input
\usepackage{hyperref}       % hyperlinks
\usepackage{url}            % simple URL typesetting
\usepackage{amsfonts,amsmath,amsthm}       % blackboard math symbols
\usepackage{amsaddr}
\usepackage{tikz}
\usetikzlibrary{calc,shapes,snakes,patterns,matrix,backgrounds,trees,arrows}
\usetikzlibrary{arrows.meta}
\usepackage{pgfplots}
%\pgfplotsset{compat=1.13}
\usepackage{capt-of}
\usepackage{mathtools}
\usepackage{algorithm}% http://ctan.org/pkg/algorithms
\usepackage{algpseudocode}% http://ctan.org/pkg/algorithmicx
\usepackage{xparse}
\usepackage{listings}  
\usepackage[numbered,framed]{mcode}
\usepackage{xcolor}
\usepackage{textcomp}
\usepackage[margin=0.75in]{geometry}
\usepackage{tabularx}
\usepackage{placeins}

% macros

\DeclareMathOperator*{\old}{old}
\DeclareMathOperator{\conv}{conv}

\DeclareMathOperator{\adm}{\textsc{Admissible}}
\DeclareMathOperator{\mar}{\textsc{Mark}}
\DeclareMathOperator{\clos}{\textsc{Closure}}
\DeclareMathOperator{\upd}{\textsc{Update Mesh}}
\DeclareMathOperator{\rec}{\textsc{Recoarsen}}
\DeclareMathOperator{\reg}{\textsc{Regularize}}
\DeclareMathOperator{\coar}{\textsc{Coarsen}}
\DeclareMathOperator{\re}{\textsc{Refine}}
\newcommand{\TR}{\textsf{T-R}}
\newcommand{\TRG}{\textsf{T-RG}}
\newcommand{\TRGB}{\textsf{T-RGB}}
\newcommand{\TNVB}{\textsf{T-NVB}}
\newcommand{\QR}{\textsf{Q-R}}
\newcommand{\QRG}{\textsf{Q-RG}}
\newcommand{\QRB}{\textsf{Q-RB}}

\definecolor{pastelred}{rgb}{1.0, 0.41, 0.38}
\definecolor{lightgreen}{rgb}{0.56, 0.93, 0.56}
\definecolor{lightblue}{rgb}{0.53, 0.81, 0.98}
\definecolor{markred}{rgb}{0.8, 0.0, 0.0}
\definecolor{lemon}{rgb}{1.0, 1.0, 0.4}
\definecolor{rednodes}{rgb}{0.7, 0.11, 0.11}
\def\matlab#1{{\small\mcode{#1}}}

% environment declaration
\newtheorem*{definition}{Definition}
\newtheorem{theorem}{Theorem}

\newtheorem{bem}{Remark}
\newtheorem{lemma}{Lemma}

% set properties
\tikzset{
  schraffiert/.style={pattern=north west lines,pattern color=#1},
  schraffiert/.default=black
}
\tikzset{cross/.style={cross out, draw=black, minimum size=2*(#1-\pgflinewidth), inner sep=0pt, outer sep=0pt},
%default radius will be 1pt. 
cross/.default={3pt}}

%\title{Criteria for Non-Recursive Local Coarsening in 2D}
\title[\tiny Local Coarsening Algorithms on Adaptively Refined Meshes in 2D ]{Local Coarsening Algorithms on \\Adaptively Refined Meshes in 2D and \\ their Efficient Implementation in MATLAB}
\author{Stefan A.~Funken \and Anja Schmidt}
\address{Ulm University, Institute for Numerical Mathematics}
\email{stefan.funken@uni-ulm.de, anja.schmidt@uni-ulm.de}
%% keywords can be removed
\keywords{Coarsening, Meshes, Grids, Refinement, Adaptive Finite Element Method}
\subjclass[2010]{65M50}

\begin{document}
\maketitle

\begin{abstract}
Adaptive meshing includes local refinement as well as coarsening of meshes. Typically, coarsening algorithms are based on an explicit refinement history. In this work, we deal with local coarsening algorithms that build on the refinement strategies for triangular and quadrilateral meshes implemented in the \texttt{ameshref} package (Funken and Schmidt 2018, 2019). The \texttt{ameshref} package is a \textsc{Matlab}-toolbox for research and teaching purposes which offers the user a certain flexibility in the \textsc{Refine} step of an adaptive finite element method but can also be used in other contexts like computer graphics. This toolbox is now be extended by the coarsening option. In \texttt{ameshref}, no explicit information about the refinement process is stored, but is instead implicit in the data structure. In this work, we present coarsening algorithms that use easy-to-verify criteria to coarsen adaptively generated meshes by exploiting the data structure. Thereby, the desired properties are guaranteed and computational efficiency is maintained. A \textsc{Matlab} implementation and some numerical examples are discussed in this work and are included in full in the toolbox \texttt{ameshcoars} (Funken and Schmidt 2020). 
\end{abstract}

\section{Introduction and Outline}

Adaptive meshing is an important component in various research areas. It is widely used in the context of solving partial differential equations where solutions have local singularities \cite{nochetto2009theory}. In stationary problems this usually involves refining meshes locally. In time-dependent problems, singularities, interfaces or forces may change in time. To capture a moving singularity in the adaptive mesh, degrees of freedom need to be released that were generated in earlier time steps. To this end, it is common to deploy coarsening algorithms to maintain the adaptive efficiency \cite{bartels,alberta}. In addition, coarsening routines can also be used to generate a sequence of coarse and fine meshes in the context of multigrid techniques \cite{multigrid,gooch}. In computer graphics, adaptive meshing algorithms come into play in, e.g., the subdivision surface method \cite{book,daniels,muller} that is in turn, e.g., used in character animation \cite{pixar}.

In \cite{funkenschmidt}, strategies are presented and an efficient implementation is made available in the \texttt{ameshref}-package \cite{web}. This toolbox was designed for teaching and research purposes. It is easy to understand, simple to use, adaptable and reliable. Compared to other existing work, it brings more flexibility to the $\re$ step of an adaptive method.

Local mesh coarsening is the counterpart of local refinement and involves the deletion of nodes. Remeshing the grid after the deletion of nodes while preserving the mesh quality is not an obvious task. To this end, there exist different approaches to coarsening, e.g., edge collapsing \cite{bankxu,gooch,daniels} or making use of the refinement history \cite{bartels,chenzhang,kossa,alberta}. 

Coarsening based on the refinement history typically uses an explicit history tree to invert the refinement \cite{kossa,alberta}. This requires refinement history updates in each step that needs to be stored additionally. Various works have shown that it is not necessary to save the refinement history explicitly to coarsen elements to their corresponding parent element. Instead, relevant information is implicitly contained in the data structure and can be extracted within linear complexity. For example, Chen and Zhang proposed a concept on how to identify admissible-to-coarsening nodes without explicitly storing the history for the newest vertex bisection \cite{chenzhang}. Based on the determination of these admissible-to-coarsening nodes, elements can be coarsened to their parent element. Similar attempts were successful for bisections in any dimensions \cite{bartels}, for the red-green-blue refinement strategy in two dimensions in \cite{RGB} and for quadrilateral meshes using a red refinement in \cite{huang}. To the best of our knowledge, this has not been done for other refinement strategies in two dimensions. For this reason, we bridge the gap and present easy-to-verify criteria to adaptively coarsen meshes generated by the refinement strategies implemented in the \texttt{ameshref}-package \cite{ameshref}. We show that all relevant information is contained in the data structures built within this package and thus a coarsening implementation within linear complexity is possible. We provide a new \textsc{Matlab} toolbox called \texttt{ameshcoars} that implements coarsening efficiently by use of vectorization and built-in functions. The toolbox \texttt{ameshcoars} is created in such a way that it can be fully used in interplay with the \texttt{ameshref} package. These packages are designed for teaching and research and add more flexibility to the $\re$/$\coar$ step of an adaptive method.

\begin{figure}
\begin{center}
\begin{minipage}[t]{0.24\textwidth}
\hspace*{6mm}\begin{tikzpicture}[rotate=90]
\node at (2.5,1) {\TR};
\coordinate (1) at (0,0);
\coordinate (2) at (2,0);
\coordinate (3) at (2,2);
\coordinate (4) at (0,2);
\draw (1) -- (2);
\draw (2) -- (3);
\draw (3) -- (4);
\draw (4) -- (1);
\coordinate (5) at ($(1)!0.5!(2)$);
\coordinate (6) at ($(2)!0.5!(3)$);
\coordinate (7) at ($(3)!0.5!(4)$);
\coordinate (8) at ($(4)!0.5!(1)$);
\coordinate (9) at ($(5)!0.5!(7)$);
\draw (5) -- (7);
\draw (6) -- (8);
\draw (5) -- (6);
\draw (7) -- (8);
\draw (1) -- (3);
\coordinate (10) at ($(1)!0.5!(5)$);
\coordinate (11) at ($(5)!0.5!(9)$);
\coordinate (12) at ($(9)!0.5!(8)$);
\coordinate (13) at ($(8)!0.5!(1)$);
\draw (10) -- (11);
\draw (11) -- (13);
\draw (10) -- (12);
\draw (12) -- (13);
\coordinate (1) at (0,-1);
\coordinate (2) at (0.5,-1);
\coordinate (3) at (0,-0.5);
\draw (1) -- (2);
\draw (2) -- (3);
\draw (3) -- (1);
\coordinate (1) at (0.7,-1);
\coordinate (2) at (1.2,-1);
\coordinate (3) at (0.7,-0.5);
\draw (1) -- (2) ;
\draw (2) -- (3);
\draw (3) -- (1);
\coordinate (5) at ($(1)!0.5!(2)$);
\coordinate (6) at ($(2)!0.5!(3)$);
\coordinate (7) at ($(3)!0.5!(1)$);
\fill (5) circle (1pt);
\fill (6) circle (1pt);
\fill (7) circle (1pt);
\draw (5) -- (6);
\draw (6) -- (7);
\draw (7) -- (5);
\coordinate (A) at (0,-1.7);
\fill[white] (A) circle (0.01pt); 
\end{tikzpicture}
\end{minipage}
\begin{minipage}[t]{0.24\textwidth}
\hspace*{6mm}\begin{tikzpicture}[rotate=90]
\node at (2.5,1) {\TRG};
\coordinate (1) at (0,0);
\coordinate (2) at (2,0);
\coordinate (3) at (2,2);
\coordinate (4) at (0,2);
\draw (1) -- (2);
\draw (2) -- (3);
\draw (3) -- (4);
\draw (4) -- (1);
\coordinate (5) at ($(1)!0.5!(2)$);
\coordinate (6) at ($(2)!0.5!(3)$);
\coordinate (7) at ($(3)!0.5!(4)$);
\coordinate (8) at ($(4)!0.5!(1)$);
\coordinate (9) at ($(5)!0.5!(7)$);
\draw (5) -- (7);
\draw (6) -- (8);
\draw (5) -- (6);
\draw (7) -- (8);
\draw (1) -- (3);
\coordinate (10) at ($(1)!0.5!(5)$);
\coordinate (11) at ($(5)!0.5!(9)$);
\coordinate (12) at ($(9)!0.5!(8)$);
\coordinate (13) at ($(8)!0.5!(1)$);
\draw (10) -- (11);
\draw (11) -- (13);
\draw (10) -- (12);
\draw (12) -- (13);
\draw (12) -- (7);
\draw (11) --(6);
\coordinate (1) at (0,-1);
\coordinate (2) at (0.5,-1);
\coordinate (3) at (0,-0.5);
\draw (1) -- (2);
\draw (2) -- (3);
\draw (3) -- (1);
\coordinate (1) at (0.7,-1);
\coordinate (2) at (1.2,-1);
\coordinate (3) at (0.7,-0.5);
\draw (1) -- (2) ;
\draw (2) -- (3);
\draw (3) -- (1);
\coordinate (5) at ($(1)!0.5!(2)$);
\coordinate (6) at ($(2)!0.5!(3)$);
\coordinate (7) at ($(3)!0.5!(1)$);
\fill (5) circle (1pt);
\fill (6) circle (1pt);
\fill (7) circle (1pt);
\draw (5) -- (6);
\draw (6) -- (7);
\draw (7) -- (5);
\coordinate (1) at (1.4,-1);
\coordinate (2) at (1.9,-1);
\coordinate (3) at (1.4,-0.5);
\draw (1) -- (2);
\draw (2) -- (3);
\draw (3) -- (1);
\coordinate (6) at ($(2)!0.5!(3)$);
\fill (6) circle (1pt);
\draw (6) -- (1);
\coordinate (A) at (0,-1.7);
\fill[white] (A) circle (0.01pt); 
\end{tikzpicture}
\end{minipage}
\begin{minipage}[t]{0.24\textwidth}
\hspace*{6mm}\begin{tikzpicture}[rotate=90]
\node at (2.5,1) {\TRGB};
\coordinate (1) at (0,0);
\coordinate (2) at (2,0);
\coordinate (3) at (2,2);
\coordinate (4) at (0,2);
\draw (1) -- (2);
\draw (2) -- (3);
\draw (3) -- (4);
\draw (4) -- (1);
\coordinate (5) at ($(1)!0.5!(2)$);
\coordinate (6) at ($(2)!0.5!(3)$);
\coordinate (7) at ($(3)!0.5!(4)$);
\coordinate (8) at ($(4)!0.5!(1)$);
\coordinate (9) at ($(5)!0.5!(7)$);
\draw (5) -- (7);
\draw (6) -- (8);
\draw (5) -- (6);
\draw (7) -- (8);
\draw (1) -- (3);
\coordinate (10) at ($(1)!0.5!(5)$);
\coordinate (11) at ($(5)!0.5!(9)$);
\coordinate (12) at ($(9)!0.5!(8)$);
\coordinate (13) at ($(8)!0.5!(1)$);
\draw (10) -- (11);
\draw (11) -- (13);
\draw (10) -- (12);
\draw (12) -- (13);
\coordinate (14) at ($(5)!0.5!(6)$);
\coordinate (15) at ($(8)!0.5!(7)$);
\draw (11) --(14);
\draw (9) --(14);
\draw (2) --(14);
\draw (12) --(15);
\draw(9) --(15);
\draw (4) --(15);
\coordinate (1) at (0,-1);
\coordinate (2) at (0.5,-1);
\coordinate (3) at (0,-0.5);
\draw (1) -- (2);
\draw (2) -- (3);
\draw (3) -- (1);
\coordinate (1) at (0.7,-1);
\coordinate (2) at (1.2,-1);
\coordinate (3) at (0.7,-0.5);
\draw (1) -- (2) ;
\draw (2) -- (3);
\draw (3) -- (1);
\coordinate (5) at ($(1)!0.5!(2)$);
\coordinate (6) at ($(2)!0.5!(3)$);
\coordinate (7) at ($(3)!0.5!(1)$);
\fill (5) circle (1pt);
\fill (6) circle (1pt);
\fill (7) circle (1pt);
\draw (5) -- (6);
\draw (6) -- (7);
\draw (7) -- (5);
\coordinate (1) at (1.4,-1);
\coordinate (2) at (1.9,-1);
\coordinate (3) at (1.4,-0.5);
\draw (1) -- (2);
\draw (2) -- (3);
\draw (3) -- (1);
\coordinate (6) at ($(2)!0.5!(3)$);
\coordinate (7) at ($(3)!0.5!(1)$);
\fill (6) circle (1pt);
\fill (7) circle (1pt);
\draw (6) -- (7);
\draw (6) -- (1);
\coordinate (1) at (2.1,-1);
\coordinate (2) at (2.6,-1);
\coordinate (3) at (2.1,-0.5);
\draw (1) -- (2);
\draw (2) -- (3);
\draw (3) -- (1);
\coordinate (6) at ($(2)!0.5!(3)$);
\fill (6) circle (1pt);
\draw (6) -- (1);
\end{tikzpicture}
\end{minipage}
\begin{minipage}[t]{0.24\textwidth}
\hspace*{6mm}\begin{tikzpicture}[rotate=90]
\node at (2.5,1) {\TNVB};
\coordinate (1) at (0,0);
\coordinate (2) at (2,0);
\coordinate (3) at (2,2);
\coordinate (4) at (0,2);
\draw (1) -- (2);
\draw (2) -- (3);
\draw (3) -- (4);
\draw (4) -- (1);
\coordinate (5) at ($(1)!0.5!(2)$);
\coordinate (6) at ($(2)!0.5!(3)$);
\coordinate (7) at ($(3)!0.5!(4)$);
\coordinate (8) at ($(4)!0.5!(1)$);
\coordinate (9) at ($(5)!0.5!(7)$);
\draw (5) --(7);
\draw (6) -- (8);
\draw (1) -- (3);
\draw (7) -- (8);
\draw (5) -- (6);
\draw (5) -- (8);
\coordinate (10) at ($(1)!0.5!(5)$);
\coordinate (11) at ($(5)!0.5!(9)$);
\coordinate (12) at ($(9)!0.5!(8)$);
\coordinate (13) at ($(8)!0.5!(1)$);
\draw (11) -- (13);
\draw (10) -- (12);
\coordinate (14) at ($(5)!0.5!(6)$);
\coordinate (15) at ($(7)!0.5!(8)$);
\draw (11) -- (14);
\draw (14) -- (15);
\draw (15) -- (12);
\draw (14) -- (2);
\draw (15) -- (4);
\coordinate (1) at (0,-1);
\coordinate (2) at (0.5,-1);
\coordinate (3) at (0,-0.5);
\draw (1) -- (2);
\draw (2) -- (3);
\draw (3) -- (1);
\coordinate (1) at (0.7,-1);
\coordinate (2) at (1.2,-1);
\coordinate (3) at (0.7,-0.5);
\draw (1) -- (2) ;
\draw (2) -- (3);
\draw (3) -- (1);
\coordinate (5) at ($(1)!0.5!(2)$);
\coordinate (6) at ($(2)!0.5!(3)$);
\coordinate (7) at ($(3)!0.5!(1)$);
\fill (5) circle (1pt);
\fill (6) circle (1pt);
\fill (7) circle (1pt);
\draw (5) -- (6);
\draw (6) -- (1);
\draw (7) -- (6);
\coordinate (1) at (1.4,-1);
\coordinate (2) at (1.9,-1);
\coordinate (3) at (1.4,-0.5);
\draw (1) -- (2);
\draw (2) -- (3);
\draw (3) -- (1);
\coordinate (6) at ($(2)!0.5!(3)$);
\coordinate (7) at ($(3)!0.5!(1)$);
\fill (6) circle (1pt);
\fill (7) circle (1pt);
\draw (6) -- (7);
\draw (6) -- (1);
\coordinate (1) at (2.1,-1);
\coordinate (2) at (2.6,-1);
\coordinate (3) at (2.1,-0.5);
\draw (1) -- (2);
\draw (2) -- (3);
\draw (3) -- (1);
\coordinate (6) at ($(2)!0.5!(3)$);
\fill (6) circle (1pt);
\draw (6) -- (1);
\end{tikzpicture}
\end{minipage}\\%\vspace{-0.5cm}
%\end{center}
\vspace*{5mm}
%\begin{center}
%\hspace*{-7cm}
%\begin{minipage}[t]{0.25\textwidth}
%\end{minipage}\hfill
%\begin{minipage}[t]{0.16\textwidth}
%\hspace*{6mm}{\QR}\\
%\end{minipage}\hfill
%\begin{minipage}[t]{0.16\textwidth}
%\hspace*{-1mm}{\QRG}\\
%\end{minipage}\hfill
%\begin{minipage}[t]{0.16\textwidth}
%\hspace*{-8mm}{\QRB}\\
%\end{minipage}\hfill
%\begin{minipage}[t]{0.25\textwidth}
%\end{minipage}\\
%\hspace*{-7cm}
\begin{minipage}[t]{0.25\textwidth}
\end{minipage}\hfill
\begin{minipage}[t]{0.16\textwidth}
\hspace*{6mm}\begin{tikzpicture}[rotate=90]
\node at (2.5,1) {\QR};
\coordinate (1) at (0,0);
\coordinate (2) at (2,0);
\coordinate (3) at (2,2);
\coordinate (4) at (0,2);
\draw (1) -- (2);
\draw (2) -- (3);
\draw (3) -- (4);
\draw (4) -- (1);
\coordinate (5) at ($(1)!0.5!(2)$);
\coordinate (6) at ($(2)!0.5!(3)$);
\coordinate (7) at ($(3)!0.5!(4)$);
\coordinate (8) at ($(4)!0.5!(1)$);
\coordinate (9) at ($(5)!0.5!(7)$);
\draw (5) -- (7);
\draw (6) -- (8);
\coordinate (10) at ($(1)!0.5!(5)$);
\coordinate (11) at ($(5)!0.5!(9)$);
\coordinate (12) at ($(9)!0.5!(8)$);
\coordinate (13) at ($(8)!0.5!(1)$);
\draw (10) -- (12);
\draw (11) -- (13);
\coordinate (1) at (0,-1);
\coordinate (2) at (0.5,-1);
\coordinate (4) at (0,-0.5);
\coordinate (3) at (0.5,-0.5);
\draw (1) -- (2);
\draw (2) -- (3);
\draw (3) -- (4);
\draw (4) -- (1);
\coordinate (1) at (0.7,-1);
\coordinate (2) at (1.2,-1);
\coordinate (4) at (0.7,-0.5);
\coordinate (3) at (1.2,-0.5);
\draw (1) -- (2) ;
\draw (2) -- (3);
\draw (3) -- (4);
\draw (4) -- (1);
\coordinate (5) at ($(1)!0.5!(2)$);
\coordinate (6) at ($(2)!0.5!(3)$);
\coordinate (7) at ($(3)!0.5!(4)$);
\coordinate (8) at ($(4)!0.5!(1)$);
\fill (5) circle (1pt);
\fill (6) circle (1pt);
\fill (7) circle (1pt);
\fill (8) circle (1pt);
\draw (5) -- (7);
\draw (6) -- (8);
\coordinate (A) at (0,-1.7);
\fill[white] (A) circle (0.01pt); 
\end{tikzpicture}
\end{minipage}\hfill
\begin{minipage}[t]{0.16\textwidth}
\hspace*{-1mm}\begin{tikzpicture}[rotate=90]
\node at (2.5,1) {\QRG};
\coordinate (1) at (0,0);
\coordinate (2) at (2,0);
\coordinate (3) at (2,2);
\coordinate (4) at (0,2);
\draw (1) -- (2);
\draw (2) -- (3);
\draw (3) -- (4);
\draw (4) -- (1);
\coordinate (5) at ($(1)!0.5!(2)$);
\coordinate (6) at ($(2)!0.5!(3)$);
\coordinate (7) at ($(3)!0.5!(4)$);
\coordinate (8) at ($(4)!0.5!(1)$);
\coordinate (9) at ($(5)!0.5!(7)$);
\draw (5) -- (7);
\draw (6) -- (8);
\coordinate (10) at ($(1)!0.5!(5)$);
\coordinate (11) at ($(5)!0.5!(9)$);
\coordinate (12) at ($(9)!0.5!(8)$);
\coordinate (13) at ($(8)!0.5!(1)$);
\draw (10) -- (12);
\draw (11) -- (13);
\draw (2) --(11);
\draw (6) --(11);
\draw (7) --(12);
\draw (4) --(12);
\coordinate (1) at (0,-1);
\coordinate (2) at (0.5,-1);
\coordinate (4) at (0,-0.5);
\coordinate (3) at (0.5,-0.5);
\draw (1) -- (2);
\draw (2) -- (3);
\draw (3) -- (4);
\draw (4) -- (1);
\coordinate (1) at (0.7,-1);
\coordinate (2) at (1.2,-1);
\coordinate (4) at (0.7,-0.5);
\coordinate (3) at (1.2,-0.5);
\draw (1) -- (2) ;
\draw (2) -- (3);
\draw (3) -- (4);
\draw (4) -- (1);
\coordinate (5) at ($(1)!0.5!(2)$);
\coordinate (6) at ($(2)!0.5!(3)$);
\coordinate (7) at ($(3)!0.5!(4)$);
\coordinate (8) at ($(4)!0.5!(1)$);
\fill (5) circle (1pt);
\fill (6) circle (1pt);
\fill (7) circle (1pt);
\fill (8) circle (1pt);
\draw (5) -- (7);
\draw (6) -- (8);
\coordinate (1) at (1.4,-1);
\coordinate (2) at (1.9,-1);
\coordinate (3) at (1.9,-0.5);
\coordinate (4) at (1.4,-0.5);
\draw (1) -- (2) ;
\draw (2) -- (3);
\draw (3) -- (4);
\draw (4) -- (1);
\coordinate (5) at ($(4)!0.5!(1)$);
\fill (5) circle (1pt);
\draw (5) -- (2);
\draw (5) -- (3);
\coordinate (1) at (2.1,-1);
\coordinate (2) at (2.6,-1);
\coordinate (3) at (2.6,-0.5);
\coordinate (4) at (2.1,-0.5);
\draw (1) -- (2) ;
\draw (2) -- (3);
\draw (3) -- (4);
\draw (4) -- (1);
\coordinate (5) at ($(2)!0.5!(3)$);
\coordinate (6) at ($(4)!0.5!(1)$);
\fill (5) circle (1pt);
\fill (6) circle (1pt);
\draw (5) --(6);
\coordinate (1) at (2.8,-1);
\coordinate (2) at (3.3,-1);
\coordinate (3) at (3.3,-0.5);
\coordinate (4) at (2.8,-0.5);
\draw (1) -- (2) ;
\draw (2) -- (3);
\draw (3) -- (4);
\draw (4) -- (1);
\coordinate (5) at ($(1)!0.5!(2)$);
\coordinate (6) at ($(4)!0.5!(1)$);
\fill (5) circle (1pt); 
\fill (6) circle (1pt);
\draw (5) -- (6);
\draw (5) -- (3);
\draw (6) -- (3);
\end{tikzpicture}
\end{minipage}\hfill
\begin{minipage}[t]{0.16\textwidth}
\hspace*{-8mm}\begin{tikzpicture}[rotate=90]
\node at (2.5,1) {\QRB};
\coordinate (1) at (0,0);
\coordinate (2) at (2,0);
\coordinate (3) at (2,2);
\coordinate (4) at (0,2);
\draw (1) -- (2);
\draw (2) -- (3);
\draw (3) -- (4);
\draw (4) -- (1);
\coordinate (5) at ($(1)!0.5!(2)$);
\coordinate (6) at ($(2)!0.5!(3)$);
\coordinate (7) at ($(3)!0.5!(4)$);
\coordinate (8) at ($(4)!0.5!(1)$);
\coordinate (9) at ($(5)!0.5!(7)$);
\draw (5) -- (7);
\draw (6) -- (8);
\coordinate (10) at ($(1)!0.5!(5)$);
\coordinate (11) at ($(5)!0.5!(9)$);
\coordinate (12) at ($(9)!0.5!(8)$);
\coordinate (13) at ($(8)!0.5!(1)$);
\draw (10) -- (12);
\draw (11) -- (13);
\coordinate (14) at ($(5)!0.5!(6)$);
\coordinate (15) at ($(7)!0.5!(8)$);
\coordinate (16) at ($(5)!0.5!(2)$);
\coordinate (17) at ($(4)!0.5!(8)$);
\draw (14) --(11);
\draw (16) --(14);
\draw(14) --(6);
\draw (17) --(15);
\draw (12) --(15);
\draw (15) --(7);
\coordinate (1) at (0,-1);
\coordinate (2) at (0.5,-1);
\coordinate (4) at (0,-0.5);
\coordinate (3) at (0.5,-0.5);
\draw (1) -- (2);
\draw (2) -- (3);
\draw (3) -- (4);
\draw (4) -- (1);
\coordinate (1) at (0.7,-1);
\coordinate (2) at (1.2,-1);
\coordinate (4) at (0.7,-0.5);
\coordinate (3) at (1.2,-0.5);
\draw (1) -- (2) ;
\draw (2) -- (3);
\draw (3) -- (4);
\draw (4) -- (1);
\coordinate (5) at ($(1)!0.5!(2)$);
\coordinate (6) at ($(2)!0.5!(3)$);
\coordinate (7) at ($(3)!0.5!(4)$);
\coordinate (8) at ($(4)!0.5!(1)$);
\fill (5) circle (1pt);
\fill (6) circle (1pt);
\fill (7) circle (1pt);
\fill (8) circle (1pt);
\draw (5) -- (7);
\draw (6) -- (8);
\coordinate (1) at (1.4,-1);
\coordinate (2) at (1.9,-1);
\coordinate (3) at (1.9,-0.5);
\coordinate (4) at (1.4,-0.5);
\draw (1) -- (2) ;
\draw (2) -- (3);
\draw (3) -- (4);
\draw (4) -- (1);
\coordinate (5) at ($(1)!0.5!(2)$);
\coordinate (6) at ($(2)!0.5!(3)$);
\coordinate (A) at ($(3)!0.5!(1)$);
\fill (5) circle (1pt);
\fill (6) circle (1pt);
\draw (5) -- (A);
\draw (6) -- (A);
\draw (A) -- (4);
\coordinate (A) at (0,-1.7);
\fill[white] (A) circle (0.01pt); 
\end{tikzpicture}
\end{minipage}\hfill
\begin{minipage}[t]{0.25\textwidth}
\end{minipage}\\
\flushright
\begin{tikzpicture}
\end{tikzpicture}

\caption{Overview of refinement strategies from the \texttt{ameshref}-toolbox \cite{web,funkenschmidt,ameshref} considered for coarsening.}
\label{fig:mesh}
\end{center}
\end{figure}

In particular, we present adaptive coarsening strategies for meshes generated by the following refinement procedures: For triangular meshes, we consider the \emph{newest vertex bisection} ({\TNVB}) first mentioned in \cite{sewell}, the \emph{red-green-blue} ({\TRGB}) refinement method using reference edges presented in \cite{carstensen2004}, the \emph{red-green} ({\TRG}) refinement strategy proposed in \cite{banksherman1981}, and a \emph{red} ({\TR}) refinement strategy that naturally emerges when  hanging nodes are allowed. For quadrilateral meshes, a \emph{red} ({\QR}) refinement strategy is considered that is a refinement by quadrisection \cite{verfuerth}. This procedure allows hanging nodes in the mesh. To eliminate these hanging nodes, additional patterns are needed. To this end, \cite{bankshermanweiser} proposes a \textsc{Closure} step via the \emph{red-green} ({\QRG}) method. A different approach to eliminate hanging nodes in the mesh was inspired by \cite{kobbelt} and is referred to as \emph{red-blue} ({\QRB}) refinement method. An overview of these refinement strategies is shown in Figure~\ref{fig:mesh}. In each case a mesh is depicted that is refined in the right bottom corner. The patterns next to the meshes are used to eliminate hanging nodes. Note that, for {{\TNVB}} and {{\TRGB}}, reference edges play a crucial role in the refinement method but they are omitted in this illustration.

Our coarsening algorithms follow the framework presented in Algorithm~\ref{alg:algo1} for {\TNVB}, {\TRGB}, {\TR} and {\QR}. For {\TRG}, {\QRG} and {\QRB}, Algorithm~\ref{alg:algo2} is used that calls Algorithm~\ref{alg:algo1}.

\begin{algorithm}[h!]
\caption{$\coar$ framework for \TNVB, \TRGB, {\TR} and \QR}\label{alg:algo1}
\begin{algorithmic}[1]
\State \textbf{Input:} An adaptively generated triangulation $\mathcal{T}$ by \TNVB, \TRGB,{\TR} or {\QR} where $\mathcal{T}_\mathrm{mark}$ is the set of marked elements for coarsening.
\State \textbf{Output:} A coarsened mesh $\hat{\mathcal{T}}$.
\Procedure{CoarsenR/Rgb/Nvb}{$\mathcal{T},\mathcal{T}_\mathrm{mark}$}
\State $\mathcal{A} \gets \adm(\mathcal{T})$ \Comment{Find admissible set $\mathcal{A}$ of elements or nodes for coarsening.}
   \State $\mathcal{A} \gets \mar(\mathcal{A},\mathcal{T}_\mathrm{mark})$ \Comment{Update $\mathcal{A}$ according to the set of marked elements or nodes $\mathcal{T}_\mathrm{mark}$.}
   \State $\mathcal{A} \gets \clos(\mathcal{T},\mathcal{A})$ \Comment{Update $\mathcal{A}$ according to neighboring information.}
    \State $\hat{\mathcal{T}} \gets \upd(\mathcal{T},\mathcal{A})$  \Comment{Coarsen mesh $\mathcal{T}$ according to the set $\mathcal{A}$.}
         \State \textbf{return} $\hat{\mathcal{T}}$
\EndProcedure
\end{algorithmic}
\end{algorithm}

\begin{algorithm}[h!]
\caption{$\coar$ framework for \TRG, {\QRG} and \QRB}\label{alg:algo2}
\begin{algorithmic}[1]
\State \textbf{Input:} An adaptively generated triangulation $\mathcal{T}$ by \TRG, {\QRG} or {\QRB} where $\mathcal{T}_\mathrm{mark}$ is the set of marked elements for coarsening.
\State \textbf{Output:} A coarsened mesh $\hat{\mathcal{T}}$.
\Procedure{CoarsenRg/Rb}{$\mathcal{T},\mathcal{T}_\mathrm{mark}$}
\State $(\mathcal{T}^R,\mathcal{T}_\mathrm{mark}^R) \gets \rec(\mathcal{T},\mathcal{T}_\mathrm{mark})$ \Comment{Recoarsen green and blue patterns to receive $\mathcal{T}^R$.}
   \State $\mathcal{T}^R \gets \textsc{CoarsenR}(\mathcal{T}^R,\mathcal{T}_\mathrm{mark}^R)$ \Comment{Operate coarsening on $\mathcal{T}^R$ according to $\mathcal{T}_\mathrm{mark}^R$ (Alg.~\ref{alg:algo1}).}
   \State $\hat{\mathcal{T}} \gets \reg(\mathcal{T}^R)$ \Comment{Regularize $\mathcal{T}^R$ by adding green and blue patterns.}
         \State \textbf{return} $\hat{\mathcal{T}}$
\EndProcedure
\end{algorithmic}
\end{algorithm}

The rest of this paper is structured as follows. First, we introduce some notation in Section ~\ref{sect:prelim}. In Section~\ref{sect:refine}, we also present the refinement strategies considered in this paper and outline their properties. We describe the data structures we use to define a triangulation and explain how the refined elements are stored within these data structures. Then, in Section~\ref{sect:ideas}, we discuss our coarsening algorithms, namely Algorithm~\ref{alg:algo1} and Algorithm~\ref{alg:algo2}, focusing on the individual steps $\adm, \mar, \clos$ and $\upd$ and their differences depending on the refinement method. We also show that the presented algorithms achieve the desired results. In Section~\ref{sect:implement} we convert these ideas into code. Thereby we focus on the efficient implementation in \textsc{Matlab}. Most of this efficiency is due to the easy-to-verify criteria - but in addition we use vectorization and built-in functions to support us in efficiency. Furthermore, an overview of the \texttt{ameshcoars} toolbox is given. The work concludes with some numerical experiments in Section~\ref{sect:experiments}. To gain an insight into the abstract level, we recommend reading sections~\ref{sect:prelim} to \ref{sect:ideas}. To use the tool it is also useful to read Section~\ref{sect:overview} as it gives an overview of the included functions and their use in context.
\section{Notation and Preliminaries}\label{sect:prelim}

We consider a polygonal domain $\Omega$ in $\mathbb{R}^2$. An element $T \subset \mathbb{R}^2$ has quadrangular or triangular shape and by convention the edges are included in $T$. A \emph{triangulation} $\mathcal{T}$ is a finite set of elements $T$ with positive area $|T|>0$, the union of all elements in $\mathcal{T}$ covers the closure $\overline{\Omega}$ and for two elements $T,K \in \mathcal{T}$ with $T \neq K$ holds $\mathring{T}\cap\mathring{K}=\emptyset$ where $\mathring{T}$ is the element $T$ without edges. In the following we use the terms \emph{mesh} and \emph{grid} synonymously to triangulation.

We denote the set of edges of a triangulation $\mathcal{T}$ with $\mathcal{E}$ and the set of nodes/vertices with $\mathcal{N}$. Accordingly, $\mathcal{E}(T) \coloneqq \left\{e \in \mathcal{E}~|~e \text{ is edge of } T\right\}$ is the set of edges of an element $T \in \mathcal{T}$ and the set of nodes $\mathcal{N}(T) \coloneqq \left\{v \in \mathcal{N}~|~v \text{ is vertex of } T\right\}$, respectively. 

We call $v \in \mathcal{N}$ a \emph{hanging node} if for some element $T \in \mathcal{T}$ holds $v \in \partial T\setminus \mathcal{N}(T)$. A \emph{$1$-irregular triangulation} is a triangulation with at most one hanging node per edge. A \emph{conforming or regular triangulation} $\mathcal{T}$ of $\Omega$ is a triangulation without hanging nodes, i.e., if additionally for all $T,K \in \mathcal{T}$ with $T\neq K$ holds that $T\cap K$ is the empty set, a common node or a common edge. 

A family of triangulations $\left\{\mathcal{T}_h\right\}$ into triangles with $h = \max\limits_{T \in \mathcal{T}} h_T >0$ is called \emph{shape regular} if there exists a constant $c\geq 1$ such that 
\[ \frac{h_T}{\rho_T} \leq c \quad \text{ for all } T \in \mathcal{T}_h \text{ and all } h>0, \]
where $h_T$ denotes the diameter of $T \in \mathcal{T}_h$ and $\rho_T$ is the diameter of the largest ball inscribed in $T$ \cite{ciarlet}.
This condition is equivalent to the condition, that the minimal angle of all triangles $T \in \mathcal{T}_h$ is bounded away from zero.

A family of triangulations $\left\{\mathcal{T}_h\right\}$ into quadrilaterals with $h = \max\limits_{T \in \mathcal{T}} h_T >0$ is called \emph{shape regular} if \begin{itemize}
\item there exists a constant $c_1\geq 1$ such that 
\[ \frac{\overline{h}_T}{\underline{h}_T} \leq c_1 \quad \text{ for all } T \in \mathcal{T}_h \text{ and all } h>0, \]
where $\overline{h}_T$ and $\underline{h}_T$ denote the longest and shortest edge of the quadrilateral $T$ respectively and 
\item there exists a constant $c_2>0$ such that
\[ |\cos\varphi \,| \leq c_2 < 1 \quad \text{ for all inner angles } \varphi \text{ of } T\] 
\end{itemize} 
holds \cite{ciarletraviart}. The latter property ensures that a quadrilateral does not degenerate into a triangle. Other weaker or equivalent formulations can be found in \cite{brandts,brandts2008equivalence} for triangular meshes and in \cite{acosta,chou,ming,ming2} for quadrilateral meshes. The shape regularity of a family of triangulations is a key property in the analysis of finite element methods \cite{babuvska,braess,verfuerth}.

Let $\{ \mathcal{T}_i\}_{i=1,\ldots,n}$ be a sequence of triangulations where $\mathcal{T}_{i+1}$ results from refining $\mathcal{T}_i$ with a certain strategy. This sequence is called a \emph{nested} triangulation if $\mathcal{T}_1 \subset \mathcal{T}_2 \subset \mathcal{T}_3 \subset \cdots \subset \mathcal{T}_n$ holds for all $n \in \mathbb{N}$.

\section{Mesh Refinement Strategies}\label{sect:refine}

In this work, we are concerned with coarsening strategies of triangulations that were generated through different refinement strategies. Our goal is to find adaptive coarsening routines for these refinement strategies without explicitly knowing the refinement history. To this end, we make use of implicit information in the data structure and are thus able to find a way back in the refinement history. In the course of adaptive refinement, we ensure certain properties that we also want to maintain during coarsening. We will look at the properties in more detail when discussing the refinement process. 

All of the ideas used within this work rely on the refinement strategies as well as on the data structures used in the implementation of these refinement strategies. For this reason, we address this topic in detail in this section.

Let us first begin with the different refinement strategies. In Figure~\ref{fig:mesh}, an overview of the implemented refinement strategies in the \texttt{ameshref}-package is given that are considered for coarsening in this work. We can cluster these methods into three different categories: nested, regular refinements; nested, irregular refinements; and non-nested, regular refinements. This categorization is helpful for coarsening as similar approaches can be used within each category. Let us present each of this category in more detail.

\subsection{Nested and Regular Refinement}

In this category, we have the newest vertex bisection {\TNVB} \cite{sewell} and the red-green-blue {\TRGB} refinement as presented in \cite{carstensen2004}. The refinement patterns are shown in Figure~\ref{fig:NVBRGB}. As the name newest vertex bisection suggests, the angle at the newest vertex is bisected. In our presentation, we do not highlight the newest vertex but the edge lying opposite to the newest vertex. In the further course of this work, we call this edge a \emph{reference edge}.

\begin{figure}
\begin{center}
\begin{minipage}{0.19\linewidth}
\hspace*{6mm}\begin{tikzpicture}
[ interface/.style={
        % The border decoration is a path replacing decorator. 
        % For the interface style we want to draw the original path.
        % The postaction option is therefore used to ensure that the
        % border decoration is drawn *after* the original path.
        postaction={draw,decorate,decoration={border,angle=45,
                    amplitude=0.13cm,segment length=1mm}}},
    ]
\coordinate (1) at (-1.2,0);
\coordinate (2) at (1.2,0);
\coordinate (3) at (0,1.5);
\draw[fill=pastelred] (1)--(2) -- (3)--(1);
\draw [interface, thick](1) --(2);
\fill (1) circle (2pt); 
\fill (2) circle (2pt); 
\fill (3) circle (2pt); 
\node at (0,-0.5) {none};
\end{tikzpicture}
\end{minipage}
\begin{minipage}{0.19\linewidth}
\hspace*{6mm}\begin{tikzpicture}[ interface/.style={
        % The border decoration is a path replacing decorator. 
        % For the interface style we want to draw the original path.
        % The postaction option is therefore used to ensure that the
        % border decoration is drawn *after* the original path.
        postaction={draw,decorate,decoration={border,angle=45,
                    amplitude=0.13cm,segment length=1mm}}},
    ]
\coordinate (1) at (-1.2,0);
\coordinate (2) at (1.2,0);
\coordinate (3) at (0,1.5);
\draw[fill=lightgreen] (1) --(2)--(3)--(1);
\draw[interface,thick] (2) -- (3);
\draw[interface,thick] (3) -- (1);
\fill (1) circle (2pt); 
\fill (2) circle (2pt); 
\fill (3) circle (2pt); 
\fill ($(1)!0.5!(2)$) circle (2pt);
\draw ($(1)!0.5!(2)$) --(3);
\node at (0,-0.5) {green};
\end{tikzpicture}
\end{minipage}
\begin{minipage}{0.19\linewidth}
\hspace*{6mm}\begin{tikzpicture}[ interface/.style={
        % The border decoration is a path replacing decorator. 
        % For the interface style we want to draw the original path.
        % The postaction option is therefore used to ensure that the
        % border decoration is drawn *after* the original path.
        postaction={draw,decorate,decoration={border,angle=45,
                    amplitude=0.13cm,segment length=1mm}}},
    ]
\coordinate (1) at (-1.2,0);
\coordinate (2) at (1.2,0);
\coordinate (3) at (0,1.5);
\draw[fill=lightblue] (1)--(2) -- (3)--(1);
\draw[interface,thick] (2) -- (3);
\fill (1) circle (2pt); 
\fill (2) circle (2pt); 
\fill (3) circle (2pt); 
\fill ($(1)!0.5!(2)$) circle (2pt);
\fill ($(1)!0.5!(3)$) circle (2pt);
\draw ($(1)!0.5!(3)$) -- ($(1)!0.5!(2)$);
\draw[interface,thick] ($(1)!0.5!(2)$)--(3);
\draw[interface,thick] (1)-- ($(1)!0.5!(2)$);
\node at (0,-0.5) {blue$_\ell$};
\end{tikzpicture}
\end{minipage}
\begin{minipage}{0.19\linewidth}
\hspace*{6mm}\begin{tikzpicture}[ interface/.style={
        % The border decoration is a path replacing decorator. 
        % For the interface style we want to draw the original path.
        % The postaction option is therefore used to ensure that the
        % border decoration is drawn *after* the original path.
        postaction={draw,decorate,decoration={border,angle=45,
                    amplitude=0.13cm,segment length=1mm}}},
    ]
\coordinate (1) at (-1.2,0);
\coordinate (2) at (1.2,0);
\coordinate (3) at (0,1.5);
\draw[fill=lightblue] (1)--(2) -- (3)--(1);
\draw[interface,thick] (3) -- (1);
\fill (1) circle (2pt); 
\fill (2) circle (2pt); 
\fill (3) circle (2pt); 
\fill ($(1)!0.5!(2)$) circle (2pt);
\fill ($(2)!0.5!(3)$) circle (2pt);
\draw ($(2)!0.5!(3)$) -- ($(1)!0.5!(2)$);
\draw[interface,thick] (3)--($(1)!0.5!(2)$);
\draw[interface,thick] ($(1)!0.5!(2)$) --(2);
\node at (0,-0.5) {blue$_r$};
\end{tikzpicture}
\end{minipage}
\begin{minipage}{0.19\linewidth}
\begin{minipage}{\textwidth}
\hspace*{6mm}\begin{tikzpicture}[ interface/.style={
        % The border decoration is a path replacing decorator. 
        % For the interface style we want to draw the original path.
        % The postaction option is therefore used to ensure that the
        % border decoration is drawn *after* the original path.
        postaction={draw,decorate,decoration={border,angle=45,
                    amplitude=0.13cm,segment length=1mm}}},
    ]
\coordinate (1) at (-1.2,0);
\coordinate (2) at (1.2,0);
\coordinate (3) at (0,1.5);
\draw[fill=pastelred] (1)--(2) -- (3)--(1);
\draw[interface,thick] (1) --(2);
\fill (1) circle (2pt); 
\fill (2) circle (2pt); 
\fill (3) circle (2pt); 
\fill ($(1)!0.5!(2)$) circle (2pt);
\fill ($(2)!0.5!(3)$) circle (2pt);
\fill ($(1)!0.5!(3)$) circle (2pt);
\draw ($(1)!0.5!(2)$) -- ($(2)!0.5!(3)$);
\draw[interface,thick] ($(1)!0.5!(2)$) -- (3);
\draw[interface,thick] (3)--($(1)!0.5!(2)$);
\draw ($(1)!0.5!(2)$) -- ($(1)!0.5!(3)$);
\node at (0,-0.5) {bisec$(3)$};
\end{tikzpicture}
\end{minipage}\\
\begin{minipage}{\textwidth}
\hspace*{6mm}\begin{tikzpicture}[ interface/.style={
        % The border decoration is a path replacing decorator. 
        % For the interface style we want to draw the original path.
        % The postaction option is therefore used to ensure that the
        % border decoration is drawn *after* the original path.
        postaction={draw,decorate,decoration={border,angle=45,
                    amplitude=0.13cm,segment length=1mm}}},
    ]
\coordinate (1) at (-1.2,0);
\coordinate (2) at (1.2,0);
\coordinate (3) at (0,1.5);
\draw[fill=pastelred] (1)--(2) -- (3)--(1);
\draw[interface,thick] (1) --(2);
\fill (1) circle (2pt); 
\fill (2) circle (2pt); 
\fill (3) circle (2pt); 
\fill ($(1)!0.5!(2)$) circle (2pt);
\fill ($(2)!0.5!(3)$) circle (2pt);
\fill ($(1)!0.5!(3)$) circle (2pt);
\draw ($(1)!0.5!(2)$) -- ($(2)!0.5!(3)$)--($(1)!0.5!(3)$)--($(1)!0.5!(2)$);
\draw[interface,thick] ($(2)!0.5!(3)$)--($(1)!0.5!(3)$);
\draw[interface,thick] ($(1)!0.5!(3)$)--($(2)!0.5!(3)$);
\node at (0,-0.5) {red};
\end{tikzpicture}
\end{minipage}
\end{minipage}
\end{center}
\caption{Refinement patterns for newest vertex bisection and red-green-blue refinement. Both methods differ only by the last pattern. {\TNVB} uses a bisec$(3)$-operation where {\TRGB} uses a red refinement. The other patterns are used in both methods. Reference edges are highlighted by hatched lines. The vertex opposite to the reference edge is considered the newest vertex. It becomes clear that {\TNVB} always splits the angle at the newest vertex. {\TRGB} is an adaption to {\TNVB} with the difference that if all three edges are bisected, a red pattern is used.}
\label{fig:NVBRGB}
\end{figure}
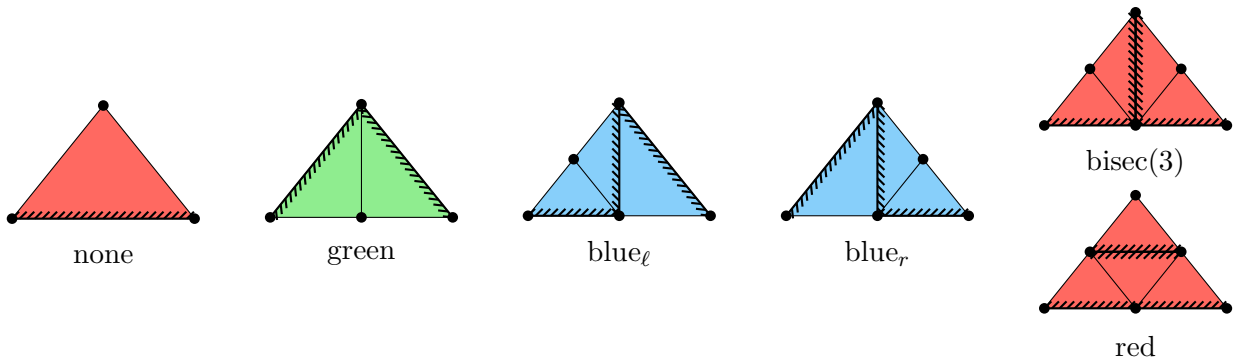

Halving the angle at the newest vertex corresponds to the bisection of the reference edge. This means that if an edge of an element is marked at least the reference edge of this element needs to be marked, too. {\TRGB} differs from {\TNVB} only in one pattern -- when all edges are bisected. Instead of using a bisec(3)-operation a red refinement pattern is used. The possible refinement patterns for {\TNVB} and {\TRGB} are shown in Figure~\ref{fig:NVBRGB}. The reference edges ensure that in a refinement process the successors of a triangle fall into at most four similarity classes, the minimal angle is thus bounded by a constant not depending on the mesh size. Thus, the shape regularity is ensured for all families $\{\mathcal{T}_h\}$ obtained by adaptive refinement via {\TNVB} or {\TRGB}.

To the best of our knowledge, no triangulations into quadrilaterals are known that fall into this category.

\subsection{Nested and Irregular Refinement}

In this category, we have the red (\textsf{R}) refinement strategy for triangular and quadrilateral meshes. 
\begin{definition}[Red Refinement]
A triangle $T$ is split into four subtriangles by joining the edges' midpoints with each other.
A quadrilateral $T$ is subdivided into four subquadrilaterals by connecting the midpoints of opposite edges with each other. We call this operation for triangles or quadrilaterals, respectively, a \emph{red refinement}, see Figure~\ref{fig:red}.
\end{definition}

\begin{figure}
\centering
\hspace*{4mm}
\begin{minipage}{0.24\textwidth}
\begin{tikzpicture}
\coordinate (1) at (-1.5,0);
\coordinate (2) at (1.5,0);
\coordinate (3) at (0,2);
\node at (0,-0.5) {none};
\draw[fill=pastelred] (1)--(2)--(3)--(1);
\fill (1) circle (2pt);
\fill (2) circle (2pt);
\fill (3) circle (2pt);
\end{tikzpicture}
\end{minipage}
\begin{minipage}{0.24\textwidth}
\begin{tikzpicture}
\coordinate (1) at (-1.5,0);
\coordinate (2) at (1.5,0);
\coordinate (3) at (0,2);
\coordinate (4) at (0,0);
\coordinate (5) at ($(2)!0.5!(3)$);
\coordinate (6) at ($(3)!0.5!(1)$);
\node at (0,-0.5) {red};
\draw[fill=pastelred] (1)--(2)--(3)--(1);
\draw (4) -- (5)-- (6)--(4);
\fill (1) circle (2pt);
\fill (2) circle (2pt);
\fill (3) circle (2pt);
\fill (4) circle (2pt);
\fill (5) circle (2pt);
\fill (6) circle (2pt);
\end{tikzpicture}
\end{minipage}\hspace*{4mm}
\begin{minipage}{0.22\textwidth}
\begin{tikzpicture}
\coordinate (1) at (-1,-1);
\coordinate (2) at (1,-1);
\coordinate (3) at (1,1);
\coordinate (4) at (-1,1);
\draw[fill=pastelred] (1)--(2)--(3)--(4)--(1);
\fill (1) circle (2pt);
\fill (2) circle (2pt);
\fill (3) circle (2pt);
\fill (4) circle (2pt);
\node at (0,-1.5) {none};
\end{tikzpicture}
\end{minipage}
\begin{minipage}{0.22\textwidth}
\begin{tikzpicture}
\coordinate (1) at (-1,-1);
\coordinate (2) at (1,-1);
\coordinate (3) at (1,1);
\coordinate (4) at (-1,1);
\coordinate (5) at (0,-1);
\coordinate (6) at (1,0);
\coordinate (7) at (0,1);
\coordinate (8) at (-1,0);
\draw[fill=pastelred] (1)--(2)--(3)--(4)--(1);
\draw (5)--(7);
\draw (6)--(8);
\fill (1) circle (2pt);
\fill (2) circle (2pt);
\fill (3) circle (2pt);
\fill (4) circle (2pt);
\fill (5) circle (2pt);
\fill (6) circle (2pt);
\fill (7) circle (2pt);
\fill (8) circle (2pt);
\fill ($(5)!0.5!(7)$) circle (2pt);
\node at (0,-1.5) {red};
\end{tikzpicture}
\end{minipage}
\caption{A triangle and quadrilateral with its red refinement.}
\label{fig:red}
\end{figure}
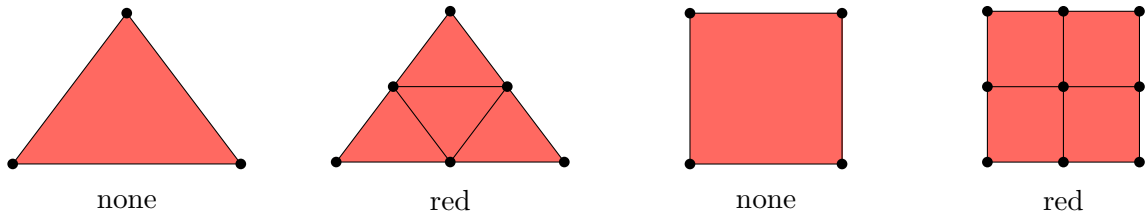 

If a red refinement is used adaptively, hanging nodes arise naturally. If hanging nodes are to be avoided, but only red refinements are allowed, the result is automatically an evenly refined mesh. For adaptive procedures, hanging nodes can thus not be avoided by red-refining only. The red refinement is a nested refinement which is irregular. There are several reasons why the number of hanging nodes per edge should be limited to one.

One reason lies in the implementation of adaptive mesh refinement. Simpler data structures can be used to track hanging nodes as there can either be one hanging node per edge or none. Further, in the context of adaptive finite element methods, the associated finite element space is spanned by basis functions whose support does not exceed a small number of elements. This has favorable consequences in the assembly of matrices as it keeps the computational cost low and ensures that the matrix is sparse. 

In summary, this rule is a sort of "regularization" of the mesh, without eliminating all hanging nodes. A more thorough analysis of the $1$-irregularity can be found in \cite{bankshermanweiser}. For this reason, we follow the $1$-Irregular Rule \emph{"Refine any unrefined element that has more than one hanging node on an edge."} established in \cite{bankshermanweiser}.
% Figure~\ref{fig:1irregular} shows a straightforward application of a red refinement for triangular and quadrilateral meshes in comparison to applying the One-Irregular Rule within the refinement step.

The shape regularity for families of triangulations into triangles obtained via the red refinement is readily apparent. A red refinement of a triangle creates four similar subtriangles. The angle for successors does thus not change during refinement. For quadrilaterals, this is not directly apparent. Zhao et al.\ proved the shape regularity for meshes obtained by enneasection instead of red refinement \cite{maozhaoshi}. I.e., a quadrilateral is not split into four subquadrilaterals but into nine by trisecting each edge and connecting opposite lying nodes with each other. The ideas of the proofs for the shape regularity of convex quadrilateral meshes obtained by enneasection carry over for the red refinement. For a family of convex quadrilateral triangulations $\{\mathcal{T}_h\}$ the shape regularity is thus satisfied.

Further, for quadrilaterals, we would like to follow the $3$-Neighbor Rule from \cite{bankshermanweiser} that says \emph{"Refine any element with three neighbors that have been red refined."} For triangles, a $2$-Neighbor Rule can be used accordingly. An implementation with and without the $2$-Neighbor Rule are given in the \texttt{ameshref}-package. The standard version (\texttt{TrefineR.m}) is without this rule.

\subsection{Non-Nested and Regular Refinement}

In this category, we have the red-green (\textsf{RG}) refinement for triangular and quadrilateral elements and the red-blue (\textsf{RB}) refinement for quadrilateral elements. In the previous section, we already discussed red refinement. Now, we want to regularize the triangulation by allowing further refinement to eliminate the hanging nodes. 
\begin{definition}[Green Refinement]
A triangle $T$ is \emph{green}-refined if it is divided into two subtriangles by bisecting one edge of the triangle and connecting this midpoint to the vertex opposite to this edge. A quadrilateral $T$ is \emph{green}-refined, if it is subdivided into
\begin{itemize}
\item three triangles by connecting the midpoint of one edge with the vertices of the opposite edge;
\item four triangles by connecting the midpoints of two adjacent edges and the common vertex of the other two edges with each other; or
\item two quadrilaterals by connecting the midpoints of two opposite edges.
\end{itemize}
\end{definition}

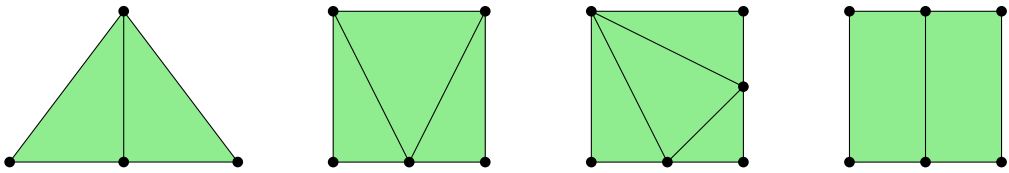
\begin{figure}
\centering
\begin{minipage}{0.24\textwidth}
\begin{tikzpicture}
\coordinate (1) at (-1.5,0);
\coordinate (2) at (1.5,0);
\coordinate (3) at (0,2);
\coordinate (4) at (0,0);
%\node at (0,-0.5) {green};
\draw[fill=lightgreen] (1)--(2)--(3)--(1);
\draw (4) -- (3);
\fill (1) circle (2pt);
\fill (2) circle (2pt);
\fill (3) circle (2pt);
\fill (4) circle (2pt);
\end{tikzpicture}
\end{minipage}
\begin{minipage}{0.19\textwidth}
\begin{tikzpicture}
\coordinate (1) at (-1,-1);
\coordinate (2) at (1,-1);
\coordinate (3) at (1,1);
\coordinate (4) at (-1,1);
\draw[fill=lightgreen] (1)--(2)--(3)--(4)--(1);
\draw (4)--($(1)!0.5!(2)$)--(3);
\fill (1) circle (2pt);
\fill (2) circle (2pt);
\fill (3) circle (2pt);
\fill (4) circle (2pt);
\fill ($(1)!0.5!(2)$) circle (2pt);
\end{tikzpicture}
\end{minipage}
\begin{minipage}{0.19\textwidth}
\begin{tikzpicture}
\coordinate (1) at (-1,-1);
\coordinate (2) at (1,-1);
\coordinate (3) at (1,1);
\coordinate (4) at (-1,1);
\draw[fill=lightgreen] (1)--(2)--(3)--(4)--(1);
\draw (4)--($(1)!0.5!(2)$)--($(2)!0.5!(3)$)--(4);
\fill (1) circle (2pt);
\fill (2) circle (2pt);
\fill (3) circle (2pt);
\fill (4) circle (2pt);
\fill ($(1)!0.5!(2)$) circle (2pt);
\fill ($(2)!0.5!(3)$) circle (2pt);
\end{tikzpicture}
\end{minipage}
\begin{minipage}{0.19\textwidth}
\begin{tikzpicture}
\coordinate (1) at (-1,-1);
\coordinate (2) at (1,-1);
\coordinate (3) at (1,1);
\coordinate (4) at (-1,1);
\draw[fill=lightgreen] (1)--(2)--(3)--(4)--(1);
\draw ($(1)!0.5!(2)$)--($(3)!0.5!(4)$);
\fill (1) circle (2pt);
\fill (2) circle (2pt);
\fill (3) circle (2pt);
\fill (4) circle (2pt);
\fill ($(1)!0.5!(2)$) circle (2pt);
\fill ($(3)!0.5!(4)$) circle (2pt);
\end{tikzpicture}
\end{minipage}
\caption{Green refinements of a triangle and quadrilateral.}
\label{fig:green}
\end{figure}

Figure~\ref{fig:green} illustrates the different green patterns for triangles and quadrilaterals. In the red-green refinement, the green patterns are now applied like a jigsaw puzzle to eliminate all the hanging nodes that are created by red refinement, cf.~Figure~\ref{fig:regularize_green}, in compliance with the $1$-Irregular Rule and $d$-Neighbor Rule with $d=2$ for triangles and $d=3$ for quadrilaterals. The red-green refinement procedure thus reads: \begin{enumerate} \item recoarsen green patterns, \item red-refine irregular mesh in accordance with the $1$-Irregular and $d$-Neighbor Rule, and \item regularize mesh with green patterns. \end{enumerate}
This procedure can be repeated until the triangulation fulfils the desired properties.
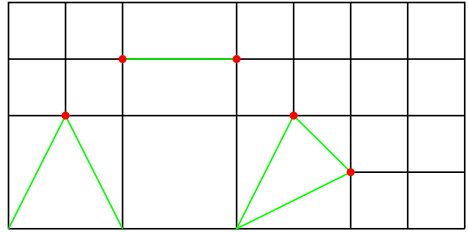
\begin{figure}
\centering
\scalebox{1.5}{
\begin{tikzpicture}[rotate=180]
\draw (0,2)--(0,0)--(4,0)--(4,2);
\draw (0,1.5)--(1,1.5);
\draw (0,2)--(4,2);
\draw (0,1)--(4,1);
\draw (0,0.5)--(4,0.5);
\draw (1,2)--(1,0);
\draw (2,2)--(2,0);
\draw (3,2)--(3,0);
\draw (0.5,2)--(0.5,0);
\draw (1.5,1)--(1.5,0);
\draw (3.5,1)--(3.5,0);
%\draw[red] (0,3)--(0.5,2)--(1,3);
\draw[green] (1,1.5)--(1.5,1)--(2,2)--(1,1.5);
\draw[green] (2,0.5)--(3,0.5);
\draw [green] (3,2)--(3.5,1)--(4,2);
%\fill[red] (0.5,2) circle (1pt);
\fill[red] (1,1.5) circle (1pt);
\fill[red] (1.5,1) circle (1pt);
\fill[red] (2,0.5) circle (1pt);
\fill[red] (3,0.5) circle (1pt);
\fill[red] (3.5,1) circle (1pt);
\end{tikzpicture}}
\caption{Regularization of an irregular mesh of quadrilaterals by creating green patterns. The red nodes represent the hanging nodes of the irregular mesh. The green patterns are arranged accordingly to obtain a regular triangulation.} 
\label{fig:regularize_green}
\end{figure}

These refinement strategies do not create a nested sequence of triangulations during the refinement process. The shape regularity for a family of triangulations $\{\mathcal{T}_h\}$ obtained by the red-green refinement is ensured. Red-refinements preserve the shape regularity and, as green patterns are removed before further refinement, the angles can not become too small. The red-green refinement of quadrilaterals leads to a mixed triangulation with triangles and quadrilaterals. The red-blue refinement strategy generates an adaptive mesh with quadrilaterals only. In contrast, new nodes need to be added by a blue pattern; green pattern can be used without additional nodes. 

\begin{definition}[Blue Refinement]
A quadrilateral $T$ is \emph{blue}-refined if it is split into three quadrilaterals by joining the midpoint of $T$ with the two midpoints of adjacent edges and with the node that is opposite to the node that is shared by those adjacent edges, cf.~Figure~\ref{fig:blue}.
\end{definition}

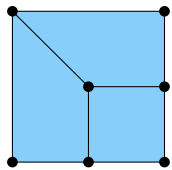
\begin{figure}
\begin{center}
\begin{minipage}[t]{0.19\textwidth}
\begin{tikzpicture}
\coordinate (1) at (-1,-1);
\coordinate (2) at (1,-1);
\coordinate (3) at (1,1);
\coordinate (4) at (-1,1);
\coordinate (5) at (0,-1);
\coordinate (6) at (1,0);
\coordinate (7) at (0,1);
\coordinate (8) at (-1,0);
\coordinate (9) at (0,0);
\draw[fill=lightblue] (1)--(2)--(3)--(4)--(1);
\draw (5)--(9);
\draw (6)--(9);
\draw (9)--(4);
\fill (1) circle (2pt);
\fill (2) circle (2pt);
\fill (3) circle (2pt);
\fill (4) circle (2pt);
\fill (5) circle (2pt);
\fill (6) circle (2pt);
\fill (9) circle (2pt);
\end{tikzpicture}
\end{minipage}
\end{center}
\caption{Blue refinement of a quadrilateral.} 
\label{fig:blue}
\end{figure}

The red-blue refinement is similar to the red-green refinement. However, as we only have one blue refinement pattern, we cannot fit this blue pattern to eliminate all hanging nodes. By closing one hanging node another might be introduced. Or, for two opposite lying hanging nodes, the only fitting pattern is the red one. Thus, an additional \textsc{Closure} step might be needed to close the mesh properly. Further remarks are to be made for red-blue refinement, to ensure that the mesh refinement strategy defines a unique way of refining the elements. A thorough discussion of this topic is presented in \cite{ameshref,funkenschmidt} and is omitted here, as for the coarsening step this is not essential.
The family of triangulations $\{\mathcal{T}_h\}$ does satisfy shape regularity. Blue patterns are not refined further and thus the shape regularity of the red-refinements is essential.

\subsection{Data Structure and Storage of Refined Elements}\label{sect:datastructure}

For our coarsening routines, we present the key ideas used to go back in the refinement history and coarsen adaptively marked elements. A refinement creates so-called \emph{child} elements. In coarsening, we want to coarsen these child elements back to their \emph{parent} elements. We will use this terminology throughout this work. The ideas used for {\TNVB} and {\TRGB} are not as obvious and some effort has to be made to determine an easy-to-verify criterion for admissible nodes. An algorithmic perspective on coarsening for {\TNVB} is made in \cite{chenzhang} and this idea is implemented in accordance with our refinement procedure in \cite{p1afem}. The same has been done in \cite{RGB} for {\TRGB}. Both implementations are included in the \texttt{ameshcoars}-package but their discussion is only marginally addressed in this work, as it is presented thoroughly in the mentioned works.
For the other listed refinement strategies the ideas to coarsen elements is quite clear. The core of the work here lies in the efficient implementation with the lack of an explicit refinement history. For this purpose, we concentrate on the data structures of these triangulations and the storage of refined elements within this data structure.

\subsubsection{Data Structure}

A triangulation $\mathcal{T}$ consists of a finite number of elements $T$, i.e.,we can denote a triangulation as a set of elements $\mathcal{T}=\left\{T_1,\ldots,T_m\right\}$. The elements $T_\ell$ for $\ell=1,\ldots m$ are further described for quadrilateral elements by $ T_\ell= \conv(v_i,v_j,v_k,v_l)$ with vertices $v_i,v_j,v_k,v_l$ that are located within a defined coordinate system. In case of triangular elements, we have $ T_\ell= \conv(v_i,v_j,v_k)$ with vertices $v_i,v_j,v_k$, respectively. The set of all vertices also called the set of nodes $\mathcal{N}=\left\{v_1,\ldots,v_n\right\}$, is stored in an $n \times 2$ array \matlab{coordinates} where the $\ell$-th node $v_\ell = (x_\ell,y_\ell) \in \mathbb{R}^2$ with its x- and y-coordinates is represented by
\[ \text{\matlab{coordinates($\ell$,:)= $[x_\ell \quad y_\ell]$}}. \]
The triangulation $\mathcal{T}=\left\{T_1,\ldots,T_m\right\}$ is stored in an $m \times 4$ array for quadrilaterals and in an $m  \times 3$ array for triangles. If there is no ambiguity, we denote this array by \matlab{elements}. If triangular und quadrilateral elements are present at the same time, we make the distincton \matlab{elements3} for triangular and \matlab{elements4} for quadrilateral elements. The $\ell$-th element $ T_\ell= \conv(v_i,v_j,v_k,v_l)$ is then stored in
\[ \matlab{elements4($\ell$,:) = $[i\quad j\quad k \quad l]$} \] 
with ordering of the vertices in a mathematical positive sense and for $ T_\ell= \conv(v_i,v_j,v_k)$ in
\[ \matlab{elements3($\ell$,:) $= [i\quad j\quad k]$}, \]
respectively. Additional boundary information can be added, where the $\ell$-th edge $E_\ell=\conv(v_i,v_j)$ corresponds to
\[\matlab{boundary($\ell$,:)$ = [i\quad j]$}.\]
Irregular edges are edges on which a hanging node lies. These edges are stored in a separate array \matlab{irregular} for which holds 
\[ \matlab{irregular($\ell$,:)$= [i \quad j \quad k]$}\]
is the $\ell$-th edge $E_\ell = \conv(v_i, v_j)$ with hanging node $v_k$. Another way to interpret an irregular edge is to see it as a \emph{virtual element}. This means that the three nodes form a triangle $V=\conv(v_i,v_j,v_k)$ with area $|V|=0$ and are therefore called virtual. We call an element $K \in \mathcal{T}$ a \emph{direct neighbor} of $T \in \mathcal{T}$ if $T$ and $K$ have a common edge. We call an element $K \in \mathcal{T}$ an \emph{indirect neighbor} of $T \in \mathcal{T}$ if they are connected by a virtual element, i.e. $ T\cap K$ is either an edge of $T$ or an edge of $K$, but not both at the same time.

The data structures presented so far are minimal in the sense that they describe the mesh fully but without any further calculations they do not provide information about the neighborhood. However, this information will be of great importance in the following. Therefore, auxiliary functions are used to generate neighborhood information that can be easily extracted later. The creation of the additional data structures is an effort that is performed once each time, but which ensures an efficient extraction of information during the implementation.

With these auxiliary functions we number the edges, save in \matlab{egde2nodes} which nodes belong to an edge and then define the elements in a mathematically positive sense by these edge numbers in \matlab{element3edges} for triangles and in \matlab{element4edges} for quadrilaterals. This information is sufficient for refinement. For coarsening we need additional information, namely which elements are adjacent to an edge, stored in \matlab{edge2elements}.

The creation of these additional data structures ensures an overall efficient implementation of the coarsening routines.

\subsubsection{Storage of Refined Elements}

Besides the data structures, another important feature is the storage of refined elements. We do not store the refinement history of an adaptively generated mesh. However, in order to coarsen meshes, some information must be implicit in the way refined elements are stored. Therefore we will go into this in more detail.

Depending on the refinement strategy, the way refined elements are stored plays a role. What they all have in common is that new coordinates are appended to the end of \matlab{coordinates}. The smaller the index, the older a node is. New elements are not stored at the end of the array \matlab{elements}. Red refinements are inserted at the position where the parent element was defined -- all other elements are shifted by the number of additionally inserted elements. We emphasize that this type of storage for red, red-green or red-blue strategies is not exploited for our coarsening algorithms. What we do use is that for quadrilaterals, the first node in an element is always the oldest, i.e., it has the minimum index. For triangles, this is not taken into account during storage, but for coarsening, the same sorting is used. Red-green and red-blue triangulations are stored in blocks, e.g., all red elements are stored in one block, and all green respectively blue elements are stored in a subsequent block. To distinguish a red block from a blue or green one, the number of blue or green elements is tracked in a global variable \matlab{nB} or \matlab{nG}. As the elements' order in green and blue blocks is not changed during refinement or coarsening, elements with the same parent element are always stored consecutively, making coarsening easy. Thus, only suitable criteria need to be found to coarsen red patterns. In summary, we only use the following storage properties for coarsening:
\begin{itemize}
\item New coordinates are appended at the end, so the smaller the index, the older the node.
\item Elements are defined in a mathematically positive sense; for quadrilaterals we want to have the nodes with minimum index at the beginning (otherwise an additional sorting is required as for for triangles).
\item Blue and green elements are stored in a block after all red elements.
\end{itemize}

For {\TNVB} or {\TRGB} the situation is different. There, it is of great importance not to lose the successive way of storing elements that have the same parent element over different refinement levels. For more information, see \cite{RGB}.

%\begin{figure}
%\begin{minipage}{0.3\textwidth}
%\begin{tikzpicture}
%
%\begin{axis}[%
%width=0.8\textwidth,
%height = 0.6\textwidth,
%scale only axis,
%xmin=0,
%xmax=1,
%ymin=0,
%ymax=1,
%axis line style={draw=none},
%ticks=none,
%axis x line*=bottom,
%axis y line=none
%]
%
%\addplot[area legend, draw=black, forget plot]
%table[row sep=crcr] {%
%x	y\\
%0	0\\
%1	0\\
%0.5	1\\
%}--cycle;
%\end{axis}
%\end{tikzpicture}%
%\end{minipage}
%\begin{minipage}{0.3\textwidth}
%\begin{tikzpicture}
%
%\begin{axis}[%
%width=0.8\textwidth,
%height=0.6\textwidth,
%scale only axis,
%xmin=0,
%xmax=1,
%ymin=0,
%ymax=1,
%axis line style={draw=none},
%ticks=none,
%axis x line*=bottom,
%axis y line=none
%]
%
%\addplot[area legend, table/row sep=crcr, patch, mesh, color=black, fill=none, forget plot, patch table={%
%0	3	4\\
%3	1	5\\
%4	5	2\\
%5	4	3\\
%}]
%table[row sep=crcr] {%
%x	y\\
%0	0\\
%1	0\\
%0.5	1\\
%0.5	0\\
%0.25	0.5\\
%0.75	0.5\\
%};
%\end{axis}
%\end{tikzpicture}%
%\end{minipage}
%\begin{minipage}{0.3\textwidth}
%\begin{tikzpicture}
%
%\begin{axis}[%
%width=0.8\textwidth,
%height=0.6\textwidth,
%scale only axis,
%xmin=0,
%xmax=1,
%ymin=0,
%ymax=1,
%axis line style={draw=none},
%ticks=none,
%axis x line*=bottom,
%axis y line=none
%]
%
%\addplot[area legend, table/row sep=crcr, patch, mesh, color=black, fill=none, forget plot, patch table={%
%0	3	4\\
%3	6	9\\
%6	1	7\\
%9	7	5\\
%7	9	6\\
%4	5	2\\
%5	10	9\\
%10	4	8\\
%9	8	3\\
%8	9	10\\
%}]
%table[row sep=crcr] {%
%x	y\\
%0	0\\
%1	0\\
%0.5	1\\
%0.5	0\\
%0.25	0.5\\
%0.75	0.5\\
%0.75	0\\
%0.875	0.25\\
%0.375	0.25\\
%0.625	0.25\\
%0.5	0.5\\
%};
%\end{axis}
%\end{tikzpicture}%
%\end{minipage}
%\end{figure}
\section{Mesh Coarsening Strategies}\label{sect:ideas}

In this section, we present the main ideas used to identify the set of admissible-to-coarsen elements. As presented in Section~\ref{sect:refine}, the refining process of red-green refinements is implemented in a three-step-procedure, i.e., we remove green patterns, refine the $1$-irregular grid, and close the grid again with green patterns to ensure the regularity of the grid. We want to use the same three-step-procedure for red-green and red-blue coarsening, replacing the refinement operation on the $1$-irregular grid by a coarsening operation, cf.~Algorithm~\ref{alg:algo2}.

We examine coarsening of $1$-irregular grids first. Un-/Closure of the mesh with green and blue patterns is a straight-forward task and is shortly presented afterwards.

\subsection{Red Refinements}

In adaptive coarsening, different aspects come into play. The task is to find an admissible set of elements that can be coarsened without losing desired properties of the mesh. 

As a basis for the coarsening of elements, we determine quartets of elements that all have the same parent element. This ensures that only elements that belonged together in a previous refinement step are merged. In particular, this guarantees that the shape regularity of a mesh is maintained when it is coarsened. These quartets are collected in the set of $\adm$ elements for coarsening.

To make the procedure adaptive, there is an option to $\mar$ certain elements for coarsening. Not all marked elements can be coarsened, but only those that appear in a quartet. This means that the marking function can reduce the number of admissible quartets determined in the previous step but it cannot enlarge it.

With the so updated admissible set of quartets, properties like the $1$-irregularity of a mesh or the $d$-Neighbor Rule are not taken into account. These are considered in a $\clos$ step. There, the number of admissible quartets is again reduced to maintain the $1$-irregularity of the grid and the $d$-Neighbor Rule. 

The remaining admissible quartets can then be coarsened back to their parent element while maintaining all desired mesh properties. This is done in the step $\upd$ where elements, nodes and irregularity data are updated for the admissible quartets and the rest remain as before.

In summary, the coarsening strategy is carried out using a four-step-procedure that reads
 \[ \adm - \mar - \clos - \upd, \]
cf.~ Algorithm~\ref{alg:algo1}. As this algorithm describes, a set of admissible elements $\mathcal{A}$ is determined, which is updated several times before the information is used to update the mesh.
In the following, we present each step in detail for $1$-irregular meshes. In the presentation, we keep in mind that the coarsening operation on the $1$-irregular grid is also used in a red-green and red-blue context, see~Algorithm~\ref{alg:algo2}. For this reason, characteristics have to be determined that apply equally for those refinement methods.
 
 \subsubsection{$\adm$}

A red refinement of an element creates four new subelements. A coarsening of such a refinement, on the contrary, is intended to merge these four subelements to regain their parent element. So the task of the step $\adm$ is to determine quartets of elements that can be combined to form a larger element. These quartets can not be selected arbitrarily but their determination follows a certain structure. This structure is not explicitly given in our data structure, but we illustrate the idea in Figure~\ref{fig:tree}. The refinement of a single element can be displayed in a tree structure. If this element is red-refined, then four new elements of a finer level are created. The refinement level corresponds to the level of the tree. If adaptive refinement is applied, it is possible that only some elements are refined and others are not. Accordingly, the mesh consists of elements of different levels of refinement. Elements of different levels can not be joined together. To this end, we need to group four elements on the same level that have the same ancestor in the tree - an admissible quartet of elements. The tree structure in Figure~\ref{fig:tree} is displayed for quadrilateral meshes but applies analogously for the triangular case.

\begin{figure}
\centering
\begin{minipage}{0.85\textwidth}
\scalebox{0.82}{
\tikzset{every picture/.style={line width=0.75pt}} %set default line width to 0.75pt        
   
\begin{tikzpicture}[x=0.75pt,y=0.75pt,yscale=-1,xscale=1]
  
%uncomment if require: \path (0,439); %set diagram left start at 0, and has height of 439

%Shape: Square [id:dp6695047095587079] 
\draw   (442,30) -- (542,30) -- (542,130) -- (442,130) -- cycle ;
%Shape: Square [id:dp761636321417916] 
\draw   (418,159) -- (468,159) -- (468,209) -- (418,209) -- cycle ;
%Shape: Square [id:dp3122549123494489] 
\draw   (618,159) -- (668,159) -- (668,209) -- (618,209) -- cycle ;
%Shape: Square [id:dp8827153252358841] 
\draw   (318,159) -- (368,159) -- (368,209) -- (318,209) -- cycle ;
%Shape: Square [id:dp34610773738212264] 
\draw   (518,159) -- (568,159) -- (568,209) -- (518,209) -- cycle ;

%Shape: Square [id:dp1270585539202126] 
\draw   (270,240) -- (295,240) -- (295,265) -- (270,265) -- cycle ;
%Shape: Square [id:dp957559146379912] 
\draw   (310,240) -- (335,240) -- (335,265) -- (310,265) -- cycle ;
%Shape: Square [id:dp32427662903967036] 
\draw   (350,240) -- (375,240) -- (375,265) -- (350,265) -- cycle ;
%Shape: Square [id:dp30986602684527387] 
\draw   (390,240) -- (415,240) -- (415,265) -- (390,265) -- cycle ;

%Shape: Square [id:dp7469084975086189] 
\draw   (532,289) -- (544.5,289) -- (544.5,301.5) -- (532,301.5) -- cycle ;
%Shape: Square [id:dp20673345998408799] 
\draw   (552,289) -- (564.5,289) -- (564.5,301.5) -- (552,301.5) -- cycle ;
%Shape: Square [id:dp11297564322404496] 
\draw   (572,289) -- (584.5,289) -- (584.5,301.5) -- (572,301.5) -- cycle ;
%Shape: Square [id:dp020539817500978108] 
\draw   (592,289) -- (604.5,289) -- (604.5,301.5) -- (592,301.5) -- cycle ;

%Shape: Square [id:dp8859070523152063] 
\draw   (472,240) -- (497,240) -- (497,265) -- (472,265) -- cycle ;
%Shape: Square [id:dp9905494758691893] 
\draw   (512,240) -- (537,240) -- (537,265) -- (512,265) -- cycle ;
%Shape: Square [id:dp11675130677223355] 
\draw   (552,240) -- (577,240) -- (577,265) -- (552,265) -- cycle ;
%Shape: Square [id:dp6954346938762751] 
\draw   (592,240) -- (617,240) -- (617,265) -- (592,265) -- cycle ;

%Straight Lines [id:da8732115707317721] 
\draw    (438,134.5) -- (375.88,156.82) ;
\draw [shift={(374,157.5)}, rotate = 340.23] [color={rgb, 255:red, 0; green, 0; blue, 0 }  ][line width=0.75]    (10.93,-3.29) .. controls (6.95,-1.4) and (3.31,-0.3) .. (0,0) .. controls (3.31,0.3) and (6.95,1.4) .. (10.93,3.29)   ;
%Straight Lines [id:da29933229567997677] 
\draw    (547,132) -- (610.14,156.77) ;
\draw [shift={(612,157.5)}, rotate = 201.42000000000002] [color={rgb, 255:red, 0; green, 0; blue, 0 }  ][line width=0.75]    (10.93,-3.29) .. controls (6.95,-1.4) and (3.31,-0.3) .. (0,0) .. controls (3.31,0.3) and (6.95,1.4) .. (10.93,3.29)   ;
%Straight Lines [id:da9938231662665968] 
\draw    (479,136.5) -- (464.29,153.97) ;
\draw [shift={(463,155.5)}, rotate = 310.1] [color={rgb, 255:red, 0; green, 0; blue, 0 }  ][line width=0.75]    (10.93,-3.29) .. controls (6.95,-1.4) and (3.31,-0.3) .. (0,0) .. controls (3.31,0.3) and (6.95,1.4) .. (10.93,3.29)   ;
%Straight Lines [id:da247456139677389] 
\draw    (504,136.5) -- (521.51,152.17) ;
\draw [shift={(523,153.5)}, rotate = 221.82] [color={rgb, 255:red, 0; green, 0; blue, 0 }  ][line width=0.75]    (10.93,-3.29) .. controls (6.95,-1.4) and (3.31,-0.3) .. (0,0) .. controls (3.31,0.3) and (6.95,1.4) .. (10.93,3.29)   ;
%Straight Lines [id:da22257465675351074] 
\draw    (371,213.5) -- (391.55,233.12) ;
\draw [shift={(393,234.5)}, rotate = 223.67000000000002] [color={rgb, 255:red, 0; green, 0; blue, 0 }  ][line width=0.75]    (10.93,-3.29) .. controls (6.95,-1.4) and (3.31,-0.3) .. (0,0) .. controls (3.31,0.3) and (6.95,1.4) .. (10.93,3.29)   ;
%Straight Lines [id:da15556776312794018] 
\draw    (573,212.5) -- (593.55,232.12) ;
\draw [shift={(595,233.5)}, rotate = 223.67000000000002] [color={rgb, 255:red, 0; green, 0; blue, 0 }  ][line width=0.75]    (10.93,-3.29) .. controls (6.95,-1.4) and (3.31,-0.3) .. (0,0) .. controls (3.31,0.3) and (6.95,1.4) .. (10.93,3.29)   ;
%Straight Lines [id:da45716294891790576] 
\draw    (581.75,267.13) -- (592.39,283.09) ;
\draw [shift={(593.5,284.75)}, rotate = 236.31] [color={rgb, 255:red, 0; green, 0; blue, 0 }  ][line width=0.75]    (10.93,-3.29) .. controls (6.95,-1.4) and (3.31,-0.3) .. (0,0) .. controls (3.31,0.3) and (6.95,1.4) .. (10.93,3.29)   ;
%Straight Lines [id:da3196980744986836] 
\draw    (315,213.5) -- (298.3,232.98) ;
\draw [shift={(297,234.5)}, rotate = 310.6] [color={rgb, 255:red, 0; green, 0; blue, 0 }  ][line width=0.75]    (10.93,-3.29) .. controls (6.95,-1.4) and (3.31,-0.3) .. (0,0) .. controls (3.31,0.3) and (6.95,1.4) .. (10.93,3.29)   ;
%Straight Lines [id:da696983045914171] 
\draw    (515,214.5) -- (498.34,233.01) ;
\draw [shift={(497,234.5)}, rotate = 311.99] [color={rgb, 255:red, 0; green, 0; blue, 0 }  ][line width=0.75]    (10.93,-3.29) .. controls (6.95,-1.4) and (3.31,-0.3) .. (0,0) .. controls (3.31,0.3) and (6.95,1.4) .. (10.93,3.29)   ;
%Straight Lines [id:da7002100787775882] 
\draw    (335,214) -- (326.79,233.04) ;
\draw [shift={(326,234.88)}, rotate = 293.32] [color={rgb, 255:red, 0; green, 0; blue, 0 }  ][line width=0.75]    (10.93,-3.29) .. controls (6.95,-1.4) and (3.31,-0.3) .. (0,0) .. controls (3.31,0.3) and (6.95,1.4) .. (10.93,3.29)   ;
%Straight Lines [id:da9441806876749901] 
\draw    (537,213) -- (526.95,231.74) ;
\draw [shift={(526,233.5)}, rotate = 298.22] [color={rgb, 255:red, 0; green, 0; blue, 0 }  ][line width=0.75]    (10.93,-3.29) .. controls (6.95,-1.4) and (3.31,-0.3) .. (0,0) .. controls (3.31,0.3) and (6.95,1.4) .. (10.93,3.29)   ;
%Straight Lines [id:da9909825998087737] 
\draw    (352.5,214.38) -- (359.33,233.62) ;
\draw [shift={(360,235.5)}, rotate = 250.45] [color={rgb, 255:red, 0; green, 0; blue, 0 }  ][line width=0.75]    (10.93,-3.29) .. controls (6.95,-1.4) and (3.31,-0.3) .. (0,0) .. controls (3.31,0.3) and (6.95,1.4) .. (10.93,3.29)   ;
%Straight Lines [id:da11246027760678456] 
\draw    (551,214.5) -- (560.11,232.71) ;
\draw [shift={(561,234.5)}, rotate = 243.43] [color={rgb, 255:red, 0; green, 0; blue, 0 }  ][line width=0.75]    (10.93,-3.29) .. controls (6.95,-1.4) and (3.31,-0.3) .. (0,0) .. controls (3.31,0.3) and (6.95,1.4) .. (10.93,3.29)   ;
%Straight Lines [id:da029261558134556553] 
\draw    (572,268.38) -- (576.41,282.59) ;
\draw [shift={(577,284.5)}, rotate = 252.76999999999998] [color={rgb, 255:red, 0; green, 0; blue, 0 }  ][line width=0.75]    (10.93,-3.29) .. controls (6.95,-1.4) and (3.31,-0.3) .. (0,0) .. controls (3.31,0.3) and (6.95,1.4) .. (10.93,3.29)   ;
%Straight Lines [id:da6232009227796108] 
\draw    (549.75,268) -- (539.66,282.24) ;
\draw [shift={(538.5,283.88)}, rotate = 305.32] [color={rgb, 255:red, 0; green, 0; blue, 0 }  ][line width=0.75]    (10.93,-3.29) .. controls (6.95,-1.4) and (3.31,-0.3) .. (0,0) .. controls (3.31,0.3) and (6.95,1.4) .. (10.93,3.29)   ;
%Straight Lines [id:da5635671089816809] 
\draw    (561,268.63) -- (557.48,282.93) ;
\draw [shift={(557,284.88)}, rotate = 283.83] [color={rgb, 255:red, 0; green, 0; blue, 0 }  ][line width=0.75]    (10.93,-3.29) .. controls (6.95,-1.4) and (3.31,-0.3) .. (0,0) .. controls (3.31,0.3) and (6.95,1.4) .. (10.93,3.29)   ;

%Shape: Square [id:dp7642183132429154] 
\draw   (0,30) -- (100,30) -- (100,130) -- (0,130) -- cycle ;
%Shape: Square [id:dp7676731534244217] 
\draw   (0,210) -- (50,210) -- (50,260) -- (0,260) -- cycle ;
%Shape: Square [id:dp33199093492538545] 
\draw   (50,260) -- (100,260) -- (100,310) -- (50,310) -- cycle ;
%Shape: Square [id:dp403596523797907] 
\draw   (0,260) -- (50,260) -- (50,310) -- (0,310) -- cycle ;
%Shape: Square [id:dp329901328002561] 
\draw   (50,210) -- (100,210) -- (100,260) -- (50,260) -- cycle ;

%Shape: Square [id:dp990152581528694] 
\draw   (0,260) -- (25,260) -- (25,285) -- (0,285) -- cycle ;
%Shape: Square [id:dp264726678995593] 
\draw   (25,260) -- (50,260) -- (50,285) -- (25,285) -- cycle ;
%Shape: Square [id:dp8019602483476818] 
\draw   (0,285) -- (25,285) -- (25,310) -- (0,310) -- cycle ;
%Shape: Square [id:dp6695440746774335] 
\draw   (25,285) -- (50,285) -- (50,310) -- (25,310) -- cycle ;
%Shape: Square [id:dp11548101143734102] 
\draw   (50,210) -- (75,210) -- (75,235) -- (50,235) -- cycle ;
%Shape: Square [id:dp7292307898652859] 
\draw   (75,210) -- (100,210) -- (100,235) -- (75,235) -- cycle ;
%Shape: Square [id:dp8733345542985578] 
\draw   (50,235) -- (75,235) -- (75,260) -- (50,260) -- cycle ;
%Shape: Square [id:dp15555696101191308] 
\draw   (75,235) -- (100,235) -- (100,260) -- (75,260) -- cycle ;

%Shape: Square [id:dp362657374177794] 
\draw   (130,30) -- (180,30) -- (180,80) -- (130,80) -- cycle ;
%Shape: Square [id:dp13320621132768895] 
\draw   (180,80) -- (230,80) -- (230,130) -- (180,130) -- cycle ;
%Shape: Square [id:dp2262370864121429] 
\draw   (130,80) -- (180,80) -- (180,130) -- (130,130) -- cycle ;
%Shape: Square [id:dp02792254015033413] 
\draw   (180,30) -- (230,30) -- (230,80) -- (180,80) -- cycle ;

%Shape: Square [id:dp45788724075813236] 
\draw   (130,210) -- (180,210) -- (180,260) -- (130,260) -- cycle ;
%Shape: Square [id:dp08775032149586925] 
\draw   (180,260) -- (230,260) -- (230,310) -- (180,310) -- cycle ;
%Shape: Square [id:dp07992964569549799] 
\draw   (130,260) -- (180,260) -- (180,310) -- (130,310) -- cycle ;
%Shape: Square [id:dp041258390601940254] 
\draw   (180,210) -- (230,210) -- (230,260) -- (180,260) -- cycle ;

%Shape: Square [id:dp4292303369952969] 
\draw   (130,260) -- (155,260) -- (155,285) -- (130,285) -- cycle ;
%Shape: Square [id:dp2307641116381074] 
\draw   (155,260) -- (180,260) -- (180,285) -- (155,285) -- cycle ;
%Shape: Square [id:dp4543926417263482] 
\draw   (130,285) -- (155,285) -- (155,310) -- (130,310) -- cycle ;
%Shape: Square [id:dp9516277202775828] 
\draw   (155,285) -- (180,285) -- (180,310) -- (155,310) -- cycle ;
%Shape: Square [id:dp0580351678481853] 
\draw   (180,210) -- (205,210) -- (205,235) -- (180,235) -- cycle ;
%Shape: Square [id:dp012599297835145573] 
\draw   (205,210) -- (230,210) -- (230,235) -- (205,235) -- cycle ;
%Shape: Square [id:dp696181278440176] 
\draw   (180,235) -- (205,235) -- (205,260) -- (180,260) -- cycle ;
%Shape: Square [id:dp2564248967223727] 
\draw   (205,235) -- (230,235) -- (230,260) -- (205,260) -- cycle ;

%Shape: Square [id:dp9180920551006162] 
\draw   (205,210) -- (217.5,210) -- (217.5,222.5) -- (205,222.5) -- cycle ;
%Shape: Square [id:dp04219762512385428] 
\draw   (217.5,210) -- (230,210) -- (230,222.5) -- (217.5,222.5) -- cycle ;
%Shape: Square [id:dp5247470354013019] 
\draw   (205,222.5) -- (217.5,222.5) -- (217.5,235) -- (205,235) -- cycle ;
%Shape: Square [id:dp14383967070699166] 
\draw   (217.5,222.5) -- (230,222.5) -- (230,235) -- (217.5,235) -- cycle ;

% Text Node
\draw (37,3) node [anchor=north west][inner sep=0.75pt]    {$\mathcal{T}_{0}$};
% Text Node
\draw (37,183) node [anchor=north west][inner sep=0.75pt]    {$\mathcal{T}_{2}$};
% Text Node
\draw (476,69) node [anchor=north west][inner sep=0.75pt]    {$T_{0,1}$};
% Text Node
\draw (330,173) node [anchor=north west][inner sep=0.75pt]    {$T_{1,1}$};
% Text Node
\draw (430,173) node [anchor=north west][inner sep=0.75pt]    {$T_{1,2}$};
% Text Node
\draw (530,173) node [anchor=north west][inner sep=0.75pt]    {$T_{1,3}$};
% Text Node
\draw (630,173) node [anchor=north west][inner sep=0.75pt]    {$T_{1,4}$};
% Text Node
\draw (273,248) node [anchor=north west][inner sep=0.75pt]  [font=\scriptsize]  {$T_{2,1}$};
% Text Node
\draw (313,248) node [anchor=north west][inner sep=0.75pt]  [font=\scriptsize]  {$T_{2,2}$};
% Text Node
\draw (353,248) node [anchor=north west][inner sep=0.75pt]  [font=\scriptsize]  {$T_{2,3}$};
% Text Node
\draw (393,248) node [anchor=north west][inner sep=0.75pt]  [font=\scriptsize]  {$T_{2,4}$};
% Text Node
\draw (474,248) node [anchor=north west][inner sep=0.75pt]  [font=\scriptsize]  {$T_{2,5}$};
% Text Node
\draw (514,248) node [anchor=north west][inner sep=0.75pt]  [font=\scriptsize]  {$T_{2,6}$};
% Text Node
\draw (554,248) node [anchor=north west][inner sep=0.75pt]  [font=\scriptsize]  {$T_{2,7}$};
% Text Node
\draw (594,248) node [anchor=north west][inner sep=0.75pt]  [font=\scriptsize]  {$T_{2,8}$};
% Text Node
\draw (530,303) node [anchor=north west][inner sep=0.75pt]  [font=\scriptsize]  {$T_{3,1}$};
% Text Node
\draw (590,303) node [anchor=north west][inner sep=0.75pt]  [font=\scriptsize]  {$T_{3,4}$};
% Text Node
\draw (570,303) node [anchor=north west][inner sep=0.75pt]  [font=\scriptsize]  {$T_{3,3}$};
% Text Node
\draw (550,303) node [anchor=north west][inner sep=0.75pt]  [font=\scriptsize]  {$T_{3,2}$};
% Text Node
\draw (167,3) node [anchor=north west][inner sep=0.75pt]    {$\mathcal{T}_{1}$};
% Text Node
\draw (167,183) node [anchor=north west][inner sep=0.75pt]    {$\mathcal{T}_{3}$};

\end{tikzpicture}
}
\end{minipage}
\caption{Initial triangulation $\mathcal{T}_0$, adaptively red-refined triangulations $\mathcal{T}_1,\mathcal{T}_2, \mathcal{T}_3$, and the corresponding tree structure. We only merge elements on the same level that have the same parent element. If all edges starting from the parent element lead to leaves, i.e., there are no further successors on a lower level, then the elements are coarsened back to their parent element. Otherwise, the lower level needs to be coarsened first. Here, $T_{2,1},\ldots, T_{2,4}$ can be coarsened to $T_{1,1}$ but $T_{2,5},\ldots,T_{2,8}$ can not be coarsened to $T_{1,3}$. For this to work, $T_{3,1},\ldots,T_{3,4}$ is first coarsened back to $T_{2,7}$. Then, the successors of $T_{1,3}$ do not have any outgoing edges to elements on a smaller level anymore and can therefore be coarsened. }
\label{fig:tree}
\end{figure}
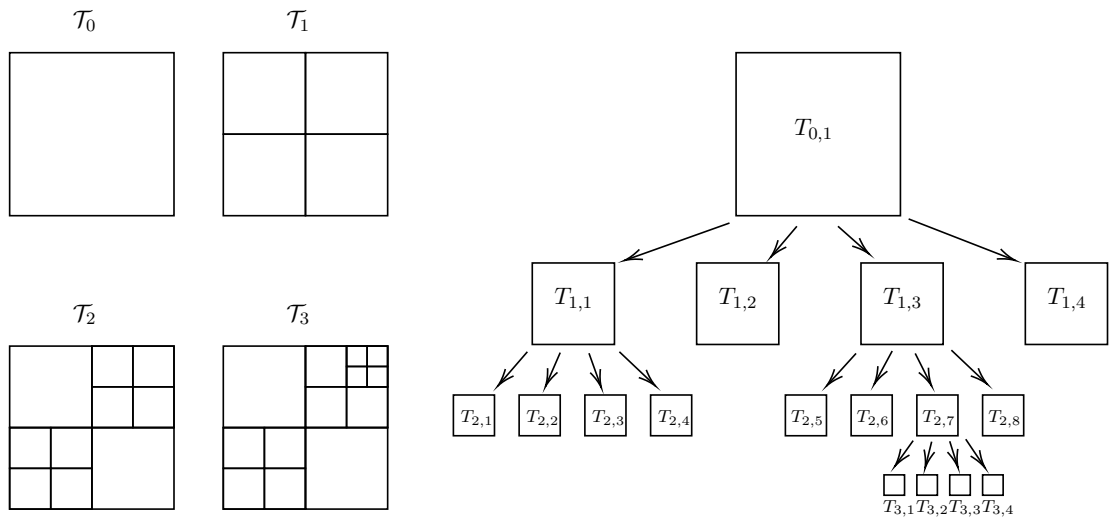

Keeping in mind that this tree structure is not explicitly given in our data structure, information about the parent element and the level of elements is a priori unknown. In a tree structure this can be easily read. In our case, it is necessary to determine an easy-to-verify criterion that checks for admissible quartets. This forms the basis for our coarsening procedures.

To distinguish such an admissible quartet of elements from other quartets, we first take a look at the refinement of an element. For quadrilateral meshes, a red refinement creates five new nodes. One of these nodes is the middle node and the other four nodes are the midpoints of the element's four edges. They are connected to the middle node by an edge. In the following, we want to speak of a five-point stencil when we talk about the five newly created nodes and their connections via edges. The four vertices of the parent element stem from an older generation than the five new nodes, see Figure~\ref{fig:father}.

One idea to determine an admissible quartet of elements is to find out whether or not a middle node belongs to the latest generation in each element adjacent to that node. Usually, out of the five-point stencil, the middle node is the node that is lastly added from these five points. This indicates to determine the newest nodes per element. However, to apply this method in the context of red-blue meshes, a determination via the newest nodes no longer works because with earlier blue refinements the middle node is created before some other nodes of the five-point stencil and is therefore considered older.

In contrast, the oldest nodes in the element can be determined robustly. The oldest nodes always originate from the respective parent element and therefore serve as optimal orientation features. We are now exploiting this as follows: The elements are numbered in mathematically positive order. If we now know which node is the oldest, we can conclude that the second following node within the element must be the corresponding middle node of a red refinement. If this middle node is determined to be the middle node for \emph{all} four adjacent elements, then these four elements form an admissible quartet of elements. Algorithm~\ref{alg:Qadm} describes the just indicated procedure in more detail. A supportive illustration is given in Figure~\ref{fig:middlenode}.

\begin{figure}
\centering
\hspace*{5mm}
\begin{minipage}{0.15\textwidth}
\begin{tikzpicture}
\coordinate (1) at (-0.5,-0.5);
\coordinate (2) at (1,-0.5);
\coordinate (3) at (1,1);
\coordinate (4) at (-0.5,1);
\coordinate (5) at (0.25,-0.5);
\coordinate (6) at (1,0.25);
\coordinate (7) at (0.25,1);
\coordinate (8) at (-0.5,0.25);
\coordinate (9) at ($(1)!0.5!(3)$);
\draw[red] (5)--(7);
\draw[red] (6)--(8);
\draw (1)--(2)--(3)--(4)--(1);
\fill (1) circle (2pt);
\fill (2) circle (2pt);
\fill (3) circle (2pt);
\fill (4) circle (2pt);
\fill[pastelred] (5) circle (2pt);
\fill[pastelred]  (6) circle (2pt);
\fill[pastelred]  (7) circle (2pt);
\fill[pastelred]  (8) circle (2pt);
\fill[pastelred]  (9) circle (2pt);
%\node[circle,draw=black, fill=white,inner sep=0pt,minimum size=4pt] at (9) {};
%\node at (9) [below right=-1mm] {$\ast$};
%\node at (9) [below left=-1mm] {$\ast$};
%\node at (9) [above right=-1mm] {$\ast$};
%\node at (9) [above left=-1mm] {$\ast$};
\end{tikzpicture}
\end{minipage}
\begin{minipage}{0.15\textwidth}
\begin{tikzpicture}
\coordinate (1) at (-0.5,-0.5);
\coordinate (2) at (1,-0.5);
\coordinate (3) at (1,1);
\coordinate (4) at (-0.5,1);
\fill (1) circle (2pt);
\fill (2) circle (2pt);
\fill (3) circle (2pt);
\fill (4) circle (2pt);
\draw (1)--(2)--(3)--(4)--(1);
\end{tikzpicture}
\end{minipage}\hspace*{1cm}
\begin{minipage}{0.15\textwidth}
\begin{tikzpicture}
\coordinate (1) at (-1,0);
\coordinate (2) at (1,0);
\coordinate (3) at (0,1.5);
\coordinate (4) at (0,0);
\coordinate (5) at ($(2)!0.5!(3)$);
\coordinate (6) at ($(3)!0.5!(1)$);
%\node at (0,0.5) {$\ast$};
\draw (1)--(2)--(3)--(1);
\draw[pastelred] (4) -- (5)-- (6)--(4);
\fill (1) circle (2pt);
\fill (2) circle (2pt);
\fill (3) circle (2pt);
\fill[pastelred] (4) circle (2pt);
\fill[pastelred] (5) circle (2pt);
\fill[pastelred] (6) circle (2pt);
\end{tikzpicture}
\end{minipage}
\begin{minipage}{0.15\textwidth}
\begin{tikzpicture}
\coordinate (1) at (-1,0);
\coordinate (2) at (1,0);
\coordinate (3) at (0,1.5);
\draw (1)--(2)--(3)--(1);
\fill (1) circle (2pt);
\fill (2) circle (2pt);
\fill (3) circle (2pt);
\end{tikzpicture}
\end{minipage}
\caption{Left: A red refinement of a quadrilateral and its parent element. A five-point stencil (in red) is introduced in a red refinement.  Right: Red-refined triangular element and its parent element. A middle element (in red) is introduced in a red refinement. For both holds that the nodes in red stem from a newer generation than the nodes in black. }
\label{fig:father}
\end{figure}
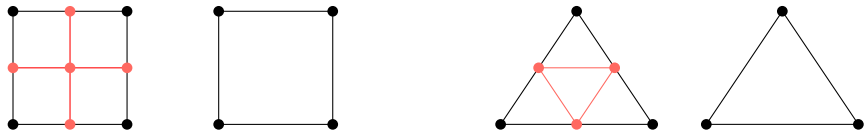

\begin{algorithm}[h!]
\caption{$\adm$ (Quadrilateral)}
\label{alg:Qadm}
\begin{algorithmic}[1]
\State \textbf{Input:} Quadrilateral mesh $\mathcal{T}$ obtained by red-refinement. $\mathcal{T}=\left\{T_1,\ldots,T_m\right\}$ with $T_i = \conv(v_{i1},v_{i2},v_{i3},v_{i4})$ for all $i \in \left\{1,\ldots,m\right\}$. The vertices $v_{ij}$ are cyclically permuted such that $v_{i1}$ is an older vertex than $v_{ij}$ for $j \in \left\{2,3,4\right\}$.
\State \textbf{Output:} Admissible set $\mathcal{A}$ of quartets $Q$ for coarsening.

\Procedure{Admissible}{$\mathcal{T}$}
\State $ \mathcal{A} \coloneqq \emptyset$
\ForAll{$i \in \left\{1,\ldots,m\right\}$} %\Comment{Find elements that have the middle node in common.}
\State find index set $I \subset \left\{1,\ldots,m\right\}$ such that $v_{i3}=v_{j3}$ for all $j \in I$ 
\If{$\#I=4$ and $v_{i3} \not\in \mathcal{N}_0$}% \Comment{Four elements have the same parent element -- coarsening possible.}
\State $Q \gets \left\{T \in \mathcal{T}~|~v_{i3} \in T\right\}$
\State $\mathcal{A} \gets \mathcal{A} \cup \left\{Q\right\}$
\EndIf
         \EndFor
                  \State \textbf{return} ${\mathcal{A}}$
\EndProcedure
\end{algorithmic}
\end{algorithm}

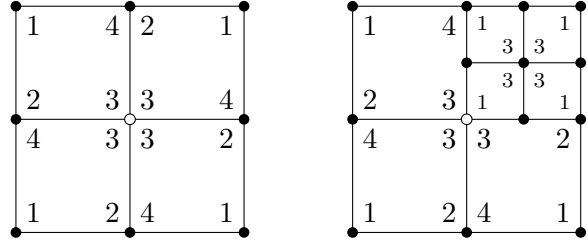
\begin{figure}
\centering
\hspace*{15mm}
\begin{minipage}{0.25\textwidth}
\begin{tikzpicture}
\coordinate (1) at (-1,-1);
\coordinate (2) at (2,-1);
\coordinate (3) at (2,2);
\coordinate (4) at (-1,2);
\coordinate (5) at (0.5,-1);
\coordinate (6) at (2,0.5);
\coordinate (7) at (0.5,2);
\coordinate (8) at (-1,0.5);
\coordinate (9) at ($(1)!0.5!(3)$);
\draw (5)--(7);
\draw (6)--(8);
\draw (1)--(2)--(3)--(4)--(1);
\fill (1) circle (2pt);
\fill (2) circle (2pt);
\fill (3) circle (2pt);
\fill (4) circle (2pt);
\fill (5) circle (2pt);
\fill (6) circle (2pt);
\fill (7) circle (2pt);
\fill (8) circle (2pt);
\node[circle,draw=black, fill=white,inner sep=0pt,minimum size=4pt] at (9) {};
\node at (9) [below right] {3};
\node at (9) [below left] {3};
\node at (9) [above right] {3};
\node at (9) [above left] {3};
\node at (4) [below right] {1};
\node at (3) [below left] {1};
\node at (1) [above right] {1};
\node at (2) [above left] {1};
\node at (7) [below right] {2};
\node at (6) [below left] {2};
\node at (5) [above left] {2};
\node at (8) [above right] {2};
\node at (8) [below right] {4};
\node at (7) [below left] {4};
\node at (6) [above left] {4};
\node at (5) [above right] {4};
\end{tikzpicture}\end{minipage}
\begin{minipage}{0.25\textwidth}
\begin{tikzpicture}
\coordinate (1) at (-1,-1);
\coordinate (2) at (2,-1);
\coordinate (3) at (2,2);
\coordinate (4) at (-1,2);
\coordinate (5) at (0.5,-1);
\coordinate (6) at (2,0.5);
\coordinate (7) at (0.5,2);
\coordinate (8) at (-1,0.5);
\coordinate (9) at ($(1)!0.5!(3)$);
\coordinate (10) at ($(9)!0.5!(6)$);
\coordinate (11) at ($(9)!0.5!(7)$);
\coordinate (12) at ($(7)!0.5!(3)$);
\coordinate (13) at ($(3)!0.5!(6)$);
\coordinate (14) at ($(9)!0.5!(3)$);
\draw (5)--(7);
\draw (6)--(8);
\draw (1)--(2)--(3)--(4)--(1);
\draw (10)--(12);
\draw (11)--(13);
\fill (1) circle (2pt);
\fill (2) circle (2pt);
\fill (3) circle (2pt);
\fill (4) circle (2pt);
\fill (5) circle (2pt);
\fill (6) circle (2pt);
\fill (7) circle (2pt);
\fill (8) circle (2pt);
\node[circle,draw=black, fill=white,inner sep=0pt,minimum size=4pt] at (9) {};
\fill (11) circle (2pt);
\fill (12) circle (2pt);
\fill (13) circle (2pt);
\fill  (10) circle (2pt);
\fill  (14) circle (2pt);
\node at (9) [below right] {3};
\node at (9) [below left] {3};
\node at (9) [above right] {\scriptsize 1};
\node at (9) [above left] {3};
\node at (14) [below right] {\scriptsize 3};
\node at (14) [below left] {\scriptsize 3};
\node at (14) [above right] {\scriptsize 3};
\node at (14) [above left] {\scriptsize 3};
\node at (4) [below right] {1};
\node at (3) [below left] {\scriptsize 1};
\node at (1) [above right] {1};
\node at (2) [above left] {1};
\node at (7) [below right] {\scriptsize 1};
\node at (6) [below left] {2};
\node at (5) [above left] {2};
\node at (8) [above right] {2};
\node at (8) [below right] {4};
\node at (7) [below left] {4};
\node at (6) [above left] {\scriptsize 1};
\node at (5) [above right] {4};
\end{tikzpicture}
\end{minipage}
\caption{Element numbering of red refinements. The storage position within an element is indicated by the numbers. The oldest node of an element is stored at position one. As elements are numbered counterclockwise, the node at the third position is a candidate for a middle node. To determine if it is a middle node of an admissible quartet, this node must be shared by four elements having this node on position three in common. Left: The middle node (in white) is shared by all elements on position three, i.e., the four elements form an admissible quartet. Right: The same middle node (in white) is now blocked because of the refined upper right corner. Here, the position three is not shared by all elements, so we are not on the last level. The elements adjacent to this middle node are thus not an admissible quartet.}
\label{fig:middlenode}
\end{figure}

For triangular meshes, a red refinement creates three new nodes. The newly created nodes are the midpoints of the element's three edges that are connected with each other to form a new triangle. In the following, we want to speak of a middle element when we talk about the three newly created nodes and their connections via edges. As for quadrilateral meshes, the three vertices of the parent element stem from an older generation than the three new nodes.

Again, we want to find admissible quartets of elements. Unlike in the quadrilateral case, we can not work with a middle node. However, we can determine the middle elements. A middle element and its three direct neighbors thus form an admissible quartet of elements. So the first task is to find middle elements. A middle element consists only of nodes of a newer generation. To determine these, we make use of the oldest nodes.

\begin{algorithm}[h!]
\caption{$\adm$ (Triangular)}\label{alg:Tadm}
\begin{algorithmic}[1]
\State \textbf{Input:} Triangular mesh $\mathcal{T}$ obtained by red-refinement. $\mathcal{T}=\left\{T_1,\ldots,T_m\right\}$ with $T_i = \conv(v_{i1},v_{i2},v_{i3})$ for all $i \in \left\{1,\ldots,m\right\}$. The vertices $v_{ij}$ are cyclically permuted such that $v_{i1}$ is an older vertex than $v_{ij}$ for $j \in \left\{2,3\right\}$.
\State \textbf{Output:} Admissible set $\mathcal{A}$ of quartets $Q$ for coarsening.
\Procedure{Admissible}{$\mathcal{T}$}
\ForAll{edges $e \in \mathcal{E}(\mathcal{T})$}
\State find set $I_e= \left\{ i \in \left\{1,\ldots,m\right\}~|~ e \text{ is edge of } T_i\right\}$
\State $v_{\mathrm{old},e}=  \old\limits_{i \in I_e}  v_{i1}$ \Comment{$\old$ returns the oldest node.} 
\EndFor
\State $\mathcal{O} = \left\{ T \in \mathcal{T}~|~v_{\mathrm{old},e_1}=v_{\mathrm{old},e_2}=v_{\mathrm{old},e_3}, \text{ where } e_1,e_2,e_3 \text{ are edges of } T\right\}$ \State $\mathcal{M} = \mathcal{T}\setminus \mathcal{O}$ \Comment{Middle elements.}
\State $\mathcal{A} \coloneqq \emptyset$
\ForAll{ $T \in \mathcal{M}$}
\If{$T$ has $3$ direct neighbors}  \Comment{Quartet having the same parent element.}
\State $Q = \big\{T\big\} \cup \left\{\hat{T} \in \mathcal{T}~|~\hat{T}  \text{ is direct neighbor of } T \right\}$
\State $\mathcal{A} \gets \mathcal{A} \cup \left\{Q\right\}$
 \EndIf
\EndFor
\State \textbf{return} ${\mathcal{A}}$\EndProcedure
\end{algorithmic}
\end{algorithm}

\begin{figure}
\tikzset{every picture/.style={line width=0.75pt}} %set default line width to 0.75pt        

\begin{tikzpicture}[x=0.75pt,y=0.75pt,yscale=-1,xscale=1]
%uncomment if require: \path (0,376); %set diagram left start at 0, and has height of 376

%Shape: Right Triangle [id:dp22703983809078787] 
\draw   (182.73,42.4) -- (16.14,200.65) -- (16.14,42.4) -- cycle ;
%Shape: Right Triangle [id:dp6347472560987248] 
\draw   (16.14,200.65) -- (182.73,42.4) -- (182.73,200.65) -- cycle ;
%Shape: Right Triangle [id:dp6200439171150264] 
\draw   (16.14,122.35) -- (99.44,43.37) -- (99.44,122.35) -- cycle ;
%Shape: Right Triangle [id:dp9635884608436149] 
\draw   (182.73,122.35) -- (99.44,201.33) -- (99.44,122.35) -- cycle ;
%Shape: Right Triangle [id:dp33467908366088195] 
\draw   (391.64,41.07) -- (225.05,199.32) -- (225.05,41.07) -- cycle ;
%Shape: Right Triangle [id:dp712200792123843] 
\draw   (225.05,199.32) -- (391.64,41.07) -- (391.64,199.32) -- cycle ;
%Shape: Right Triangle [id:dp024563476868561307] 
\draw   (225.05,121.02) -- (308.35,41.9) -- (308.35,121.02) -- cycle ;
%Shape: Right Triangle [id:dp46445587552255874] 
\draw   (391.64,120.19) -- (308.35,199.32) -- (308.35,120.19) -- cycle ;

%Shape: Right Triangle [id:dp7844842926623162] 
\draw   (598,40.24) -- (431.41,198.49) -- (431.41,40.24) -- cycle ;
%Shape: Right Triangle [id:dp33112718931001484] 
\draw   (431.41,198.49) -- (598,40.24) -- (598,198.49) -- cycle ;
%Shape: Right Triangle [id:dp6959393099199956] 
\draw  [fill=pastelred  ,fill opacity=1 ] (431.41,120.19) -- (514.7,41.07) -- (514.7,120.19) -- cycle ;
%Shape: Right Triangle [id:dp5282454769196576] 
\draw  [fill=pastelred  ,fill opacity=1 ] (598,119.36) -- (514.7,198.49) -- (514.7,119.36) -- cycle ;
%Shape: Right Triangle [id:dp943595593364027] 
\draw   (182,82) -- (142,122) -- (142,82) -- cycle ;
%Shape: Right Triangle [id:dp6174619041005383] 
\draw   (390.6,79.8) -- (350.6,119.8) -- (350.6,79.8) -- cycle ;
%Shape: Right Triangle [id:dp4180416536040089] 
\draw  [fill=pastelred  ,fill opacity=1 ] (598,79) -- (558,119) -- (558,79) -- cycle ;
\node at (570,95) {$\ast$};
\node at (490,95) {\Large$\ast$};

% Text Node
\draw (11.02,202.2) node [anchor=north west][inner sep=0.75pt]  [font=\small]  {$1$};
% Text Node
\draw (175.98,201.28) node [anchor=north west][inner sep=0.75pt]  [font=\small]  {$2$};
% Text Node
\draw (174.85,25.3) node [anchor=north west][inner sep=0.75pt]  [font=\small]  {$3$};
% Text Node
\draw (13.65,27.3) node [anchor=north west][inner sep=0.75pt]  [font=\small]  {$4$};
% Text Node
\draw (94.25,201.36) node [anchor=north west][inner sep=0.75pt]  [font=\small]  {$5$};
% Text Node
\draw (100.45,122.99) node [anchor=north west][inner sep=0.75pt]  [font=\small]  {$6$};
% Text Node
\draw (3.13,112.5) node [anchor=north west][inner sep=0.75pt]  [font=\small]  {$7$};
% Text Node
\draw (183.61,113.33) node [anchor=north west][inner sep=0.75pt]  [font=\small]  {$8$};
% Text Node
\draw (96.31,26.42) node [anchor=north west][inner sep=0.75pt]  [font=\small]  {$9$};
% Text Node
\draw (67.09,167.32) node [anchor=north west][inner sep=0.75pt]  [color={rgb, 255:red, 208; green, 2; blue, 27 }  ,opacity=1 ]  {$1$};
% Text Node
\draw (32.04,138.25) node [anchor=north west][inner sep=0.75pt]  [color={rgb, 255:red, 208; green, 2; blue, 27 }  ,opacity=1 ]  {$1$};
% Text Node
\draw (119.65,138.25) node [anchor=north west][inner sep=0.75pt]  [color={rgb, 255:red, 208; green, 2; blue, 27 }  ,opacity=1 ]  {$5$};
% Text Node
\draw (150.32,167.32) node [anchor=north west][inner sep=0.75pt]  [color={rgb, 255:red, 208; green, 2; blue, 27 }  ,opacity=1 ]  {$2$};
% Text Node
\draw (119.65,59.35) node [anchor=north west][inner sep=0.75pt]  [color={rgb, 255:red, 208; green, 2; blue, 27 }  ,opacity=1 ]  {$3$};
% Text Node
\draw (67.09,88.42) node [anchor=north west][inner sep=0.75pt]  [color={rgb, 255:red, 208; green, 2; blue, 27 }  ,opacity=1 ]  {$6$};
% Text Node
\draw (32.04,59.35) node [anchor=north west][inner sep=0.75pt]  [color={rgb, 255:red, 208; green, 2; blue, 27 }  ,opacity=1 ]  {$4$};
% Text Node
\draw (268.59,183.09) node [anchor=north west][inner sep=0.75pt]  [color={rgb, 255:red, 208; green, 2; blue, 27 }  ,opacity=1 ]  {$1$};
% Text Node
\draw (268.59,105.86) node [anchor=north west][inner sep=0.75pt]  [color={rgb, 255:red, 208; green, 2; blue, 27 }  ,opacity=1 ]  {$1$};
% Text Node
\draw (225.66,142.4) node [anchor=north west][inner sep=0.75pt]  [color={rgb, 255:red, 208; green, 2; blue, 27 }  ,opacity=1 ]  {$1$};
% Text Node
\draw (268.59,154.86) node [anchor=north west][inner sep=0.75pt]  [color={rgb, 255:red, 208; green, 2; blue, 27 }  ,opacity=1 ]  {$1$};
% Text Node
\draw (309.76,142.4) node [anchor=north west][inner sep=0.75pt]  [color={rgb, 255:red, 208; green, 2; blue, 27 }  ,opacity=1 ]  {$1$};
% Text Node
\draw (351.82,183.09) node [anchor=north west][inner sep=0.75pt]  [color={rgb, 255:red, 208; green, 2; blue, 27 }  ,opacity=1 ]  {$2$};
% Text Node
\draw (380.73,150.71) node [anchor=north west][inner sep=0.75pt]  [color={rgb, 255:red, 208; green, 2; blue, 27 }  ,opacity=1 ]  {$2$};
% Text Node
\draw (351.82,153.2) node [anchor=north west][inner sep=0.75pt]  [color={rgb, 255:red, 208; green, 2; blue, 27 }  ,opacity=1 ]  {$2$};
% Text Node
\draw (384.17,59) node [anchor=north west][inner sep=0.75pt]  [font=\scriptsize,color={rgb, 255:red, 208; green, 2; blue, 27 }  ,opacity=1 ]  {$3$};
% Text Node
\draw (268.59,74.3) node [anchor=north west][inner sep=0.75pt]  [color={rgb, 255:red, 208; green, 2; blue, 27 }  ,opacity=1 ]  {$4$};
% Text Node
\draw (309.76,71.81) node [anchor=north west][inner sep=0.75pt]  [color={rgb, 255:red, 208; green, 2; blue, 27 }  ,opacity=1 ]  {$3$};
% Text Node
\draw (351.82,38.59) node [anchor=north west][inner sep=0.75pt]  [color={rgb, 255:red, 208; green, 2; blue, 27 }  ,opacity=1 ]  {$3$};
% Text Node
\draw (342.34,63.33) node [anchor=north west][inner sep=0.75pt]  [color={rgb, 255:red, 208; green, 2; blue, 27 }  ,opacity=1 ]  {$3$};
% Text Node
\draw (370.53,59) node [anchor=north west][inner sep=0.75pt]  [font=\scriptsize,color={rgb, 255:red, 208; green, 2; blue, 27 }  ,opacity=1 ]  {$3$};
% Text Node
\draw (226.53,66) node [anchor=north west][inner sep=0.75pt]  [color={rgb, 255:red, 208; green, 2; blue, 27 }  ,opacity=1 ]  {$4$};
% Text Node
\draw (257.2,38.59) node [anchor=north west][inner sep=0.75pt]  [color={rgb, 255:red, 208; green, 2; blue, 27 }  ,opacity=1 ]  {$4$};
% Text Node
\draw (128.98,64.28) node [anchor=north west][inner sep=0.75pt]  [font=\small]  {$10$};
% Text Node
\draw (131.98,124.28) node [anchor=north west][inner sep=0.75pt]  [font=\small]  {$12$};
% Text Node
\draw (184.98,72.28) node [anchor=north west][inner sep=0.75pt]  [font=\small]  {$11$};
% Text Node
\draw (124.32,101.32) node [anchor=north west][inner sep=0.75pt]  [color={rgb, 255:red, 208; green, 2; blue, 27 }  ,opacity=1 ]  {$6$};
% Text Node
\draw (144,85) node [anchor=north west][inner sep=0.75pt]  [color={rgb, 255:red, 208; green, 2; blue, 27 }  ,opacity=1 ]  {$10$};
% Text Node
\draw (167.32,60.32) node [anchor=north west][inner sep=0.75pt]  [color={rgb, 255:red, 208; green, 2; blue, 27 }  ,opacity=1 ]  {$3$};
% Text Node
\draw (167,100) node [anchor=north west][inner sep=0.75pt]  [color={rgb, 255:red, 208; green, 2; blue, 27 }  ,opacity=1 ]  {$8$};
% Text Node
\draw (364.73,79.13) node [anchor=north west][inner sep=0.75pt]  [font=\scriptsize,color={rgb, 255:red, 208; green, 2; blue, 27 }  ,opacity=1 ]  {$3$};
% Text Node
\draw (364.33,90) node [anchor=north west][inner sep=0.75pt]  [font=\scriptsize,color={rgb, 255:red, 208; green, 2; blue, 27 }  ,opacity=1 ]  {$8$};
% Text Node
\draw (373.4,108) node [anchor=north west][inner sep=0.75pt]  [font=\scriptsize,color={rgb, 255:red, 208; green, 2; blue, 27 }  ,opacity=1 ]  {$8$};
% Text Node
\draw (383.8,95.2) node [anchor=north west][inner sep=0.75pt]  [font=\scriptsize,color={rgb, 255:red, 208; green, 2; blue, 27 }  ,opacity=1 ]  {$8$};
% Text Node
\draw (350.6,90.8) node [anchor=north west][inner sep=0.75pt]  [font=\scriptsize,color={rgb, 255:red, 208; green, 2; blue, 27 }  ,opacity=1 ]  {$6$};
% Text Node
\draw (331.8,95.2) node [anchor=north west][inner sep=0.75pt]  [font=\scriptsize,color={rgb, 255:red, 208; green, 2; blue, 27 }  ,opacity=1 ]  {$6$};
% Text Node
\draw (331.8,108.2) node [anchor=north west][inner sep=0.75pt]  [font=\scriptsize,color={rgb, 255:red, 208; green, 2; blue, 27 }  ,opacity=1 ]  {$6$};
% Text Node
\draw (351.8,119.45) node [anchor=north west][inner sep=0.75pt]  [color={rgb, 255:red, 208; green, 2; blue, 27 }  ,opacity=1 ]  {$5$};

\end{tikzpicture}

\caption{Determination of middle elements via an easy-to-verify criterion. Left: The numbers written in black are the indices of the nodes. The nodes 1--4 are part of the initial triangulation, 5--11 were added via refinement. To each element we assign the index of the oldest node per element (in red). Here, the oldest node is represented by the smallest index. Middle: We now compare the indices of the oldest nodes per element with the neighboring element and assign the older value (the smaller index) to the edge that is shared by these two elements. Boundary edges do not have a direct neighbor -- no comparison is needed. Right: We consider an element a middle element if it does not have the same oldest node assigned on all three edges (in red). The asterisks indicate that this middle element and its direct neighbors is considered an admissible quartet, whereas a middle element without an asterisk does not have three direct neighbors and is thus not in the set of admissible quartets. }
\label{fig:middleelement}
\end{figure}
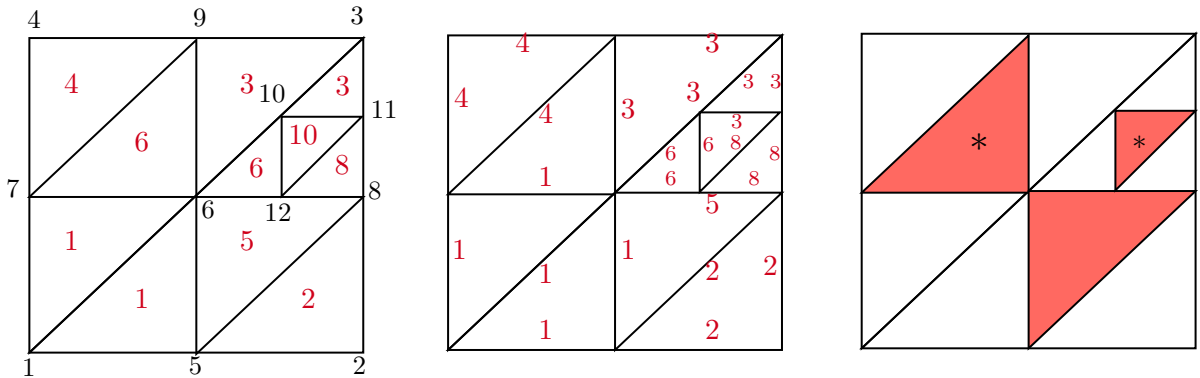

The determination of the admissible quartets of elements in the case of triangular meshes is described in Algorithm~\ref{alg:Tadm}. An illustration of the algorithm is shown in Figure~\ref{fig:middleelement}. We will now briefly introduce the main ideas that lead to this procedure. The algorithm takes advantage of the property that a middle element consists only of newer nodes and thus differs from the other elements - since these always contain a node from an older generation. The procedure therefore provides for the oldest node to be determined for each element. If the element $T_i = \conv(v_{i1},v_{i2},v_{i3})$ is given, we assume that $v_{i1}$ is the oldest node of this element. If not, we will cyclically permute the vertices of the element until $v_{i1}$ is the oldest node of this element. The next step is to compare the oldest node of an element with the oldest nodes of the direct neighbor element. We assign the oldest nodes of this comparison to the edge shared by these two elements. We find that the three elements of a red refinement that are not the middle element are assigned the same oldest node on each edge. So if this is not the case, it is considered the middle element. 

Finally, in order to obtain the admissible quartets of elements, we examine whether a middle element thus determined has three direct neighbours. If this is the case, an admissible quartet is given by this middle element and its three direct neighbours.

\subsubsection{$\mar$}  In the step $\adm$, we determined all admissible quartets that can be considered for elimination. With the step $\mar$, adaptivity is incorporated into a coarsening step. This includes that the determined set $\mathcal{A}$ is reduced by a marking operation. We consider the following marking operation: If an element of an admissible quartet is marked for coarsening, all elements of this quartet are considered for coarsening. If no element of an admissible quartet is marked, this quartet is not coarsened. This means that $\mathcal{A}$ is updated and the number of quartets for coarsening is reduced. This step is identical for both triangular and quadrilateral meshes, and is described in Algorithm~\ref{alg:mark}.

\begin{algorithm}[h!]
\caption{$\mar$}\label{alg:mark}
\begin{algorithmic}[1]
\State \textbf{Input:} Admissible set $\mathcal{A}$ of quartets and a set of marked elements $\mathcal{T}_\mathrm{mark}$.
\State \textbf{Output:} Admissible and marked set $\mathcal{A}$ of quartets $Q$ for coarsening.
\Procedure{Mark}{$\mathcal{A},\mathcal{T}_\mathrm{mark}$}
\State $\bar{\mathcal{A}}\coloneqq\emptyset$
\ForAll{$T \in \mathcal{T}_\mathrm{mark}$} \Comment{Consider all elements in a quartet for coarsening or none.}
\If{$\exists~ Q \in \mathcal{A}$ such that $T \in Q$ }
\State $\bar{\mathcal{A}} \gets \bar{\mathcal{A}}\cup \left\{Q\right\}$
\EndIf
\EndFor
\State $\mathcal{A} \gets \bar{\mathcal{A}}$
\State \textbf{return} ${\mathcal{A}}$ 
\EndProcedure
\end{algorithmic}
\end{algorithm}

Note that the convention of at least one marked element per quartet is not restrictive. By a small modification, we can also require that all elements of an admissible quartet need to be marked to be considered for coarsening. 

This marking operation can lead to k-irregular triangulations with $k>1,~ k \in \mathbb{N}$ and non-compliance with the 3-Neighbor Rule. To this end, an additional step $\clos$ is needed.

\subsubsection{$\clos$}

After a $\clos$ step, two properties shall be satisfied: the $1$-irregularity and the $d$-Neighbor Rule with $d=2$ for triangular and $d=3$ for quadrilateral meshes. To this end, we again want to find easy-to-verify criteria to ensure both rules for triangular and quadrilateral meshes.

The $1$-irregularity of a mesh means that there can be at most one hanging node per edge. Without taking neighbor information into account, coarsening can lead to meshes with more than one hanging node per edge, cf.~Figure~\ref{fig:irregular}. To prevent this, coarsening back to the parent element is blocked if one of the neighboring elements has a higher refinement level, i.e., one of the elements in an admissible quartet has an irregular edge. By blocking, we mean that we reduce the set of admissible quartets $\mathcal{A}$ by those quartets that have at least one irregular edge. Note, that this blocking is not always necessary. For example, if there is another admissible quartet for coarsening at a higher level of refinement that causes this irregular edge of an element and this quartet is coarsened back to the parent element, blocking is not necessary. However, to distinguish between those two cases, which are both specified in Figure~\ref{fig:irregular}, one would not only need the immediate neighbor information but the entire neighborhood. Since the refinement also produced patterns of different levels in the subsequent refinement steps, a coarsening without this distinction does not slow down the coarsening process relative to the refinement.

\begin{figure}
\centering
\begin{minipage}{0.22\textwidth}
\begin{tikzpicture}
\coordinate (1) at (0,0);
\coordinate (2) at (1.5,0);
\coordinate (3) at (3,0);
\coordinate (4) at (0,1.5);
\coordinate (5) at (1.5,1.5);
\coordinate (6) at (3,1.5);
\coordinate (7) at ($(1)!0.5!(2)$);
\coordinate (8) at ($(3)!0.5!(2)$);
\coordinate (9) at ($(3)!0.5!(6)$);
\coordinate (10) at ($(5)!0.5!(6)$);
\coordinate (11) at ($(5)!0.5!(4)$);
\coordinate (12) at ($(1)!0.5!(4)$);
\coordinate (13) at ($(12)!0.5!(9)$);
\coordinate (14) at ($(13)!0.5!(12)$);
\coordinate (15) at ($(13)!0.5!(9)$);
\coordinate (16) at ($(15)!0.5!(9)$);
\coordinate (17) at ($(9)!0.5!(6)$);
\coordinate (18) at ($(10)!0.5!(6)$);
\coordinate (19) at ($(10)!0.5!(15)$);
\coordinate (20) at ($(5)!0.5!(13)$);
\coordinate (21) at ($(13)!0.5!(15)$);
\coordinate (22) at ($(20)!0.5!(19)$);
\coordinate (23) at ($(16)!0.5!(18)$);
\coordinate (24) at ($(19)!0.5!(10)$);
\coordinate (25) at ($(5)!0.5!(10)$);
\coordinate (26) at ($(25)!0.5!(10)$);
\coordinate (27) at ($(25)!0.5!(22)$);
\coordinate (28) at ($(19)!0.5!(22)$);
\coordinate (29) at ($(28)!0.5!(26)$);
\draw (1)--(3)--(6)--(4)--(1);
\draw (12)--(9);
\draw (11)--(7);
\draw (5)--(2);
\draw (10)--(8);
\draw (18)--(16);
\draw (25)--(21);
\draw (20)--(17);
\draw (26)--(28);
\draw (27)--(24);
\node at (14) [below right=-1mm] {\textcolor{red}{$\ast$}};
\node at (14) [below left=-1mm] {\textcolor{red}{$\ast$}};
\node at (14) [above right=-1mm] {\textcolor{red}{$\ast$}};
\node at (14) [above left=-1mm] {\textcolor{red}{$\ast$}};
\node at (29) [below right=-1mm] {\tiny $\ast$};
\node at (29) [below left=-1mm] {\tiny$\ast$};
\node at (29) [above right=-1mm] {\tiny$\ast$};
\node at (29) [above left=-1mm] {\tiny$\ast$};
\node at (23) [below right=-1mm] {\textcolor{red}{$\ast$}};
\node at (23) [below left=-1mm] {\textcolor{red}{$\ast$}};
\node at (23) [above right=-1mm] {\textcolor{red}{$\ast$}};
\node at (23) [above left=-1mm] {\textcolor{red}{$\ast$}};
\end{tikzpicture}\end{minipage}
\hspace*{3.5mm}
\begin{minipage}{0.22\textwidth}
\begin{tikzpicture}
\coordinate (1) at (0,0);
\coordinate (2) at (1.5,0);
\coordinate (3) at (3,0);
\coordinate (4) at (0,1.5);
\coordinate (5) at (1.5,1.5);
\coordinate (6) at (3,1.5);
\coordinate (7) at ($(1)!0.5!(2)$);
\coordinate (8) at ($(3)!0.5!(2)$);
\coordinate (9) at ($(3)!0.5!(6)$);
\coordinate (10) at ($(5)!0.5!(6)$);
\coordinate (11) at ($(5)!0.5!(4)$);
\coordinate (12) at ($(1)!0.5!(4)$);
\coordinate (13) at ($(12)!0.5!(9)$);
\coordinate (14) at ($(13)!0.5!(12)$);
\coordinate (15) at ($(13)!0.5!(9)$);
\coordinate (16) at ($(15)!0.5!(9)$);
\coordinate (17) at ($(9)!0.5!(6)$);
\coordinate (18) at ($(10)!0.5!(6)$);
\coordinate (19) at ($(10)!0.5!(15)$);
\coordinate (20) at ($(5)!0.5!(13)$);
\coordinate (21) at ($(13)!0.5!(15)$);
\coordinate (22) at ($(20)!0.5!(19)$);
\coordinate (23) at ($(16)!0.5!(18)$);
\coordinate (24) at ($(19)!0.5!(10)$);
\coordinate (25) at ($(5)!0.5!(10)$);
\coordinate (26) at ($(25)!0.5!(10)$);
\coordinate (27) at ($(25)!0.5!(22)$);
\coordinate (28) at ($(19)!0.5!(22)$);
\coordinate (29) at ($(28)!0.5!(26)$);
\draw (1)--(3)--(6)--(4)--(1);
\draw (13)--(9);
\draw (5)--(2);
\draw (10)--(8);
\draw (25)--(21);
\draw (20)--(19);
\fill (20) circle (2pt);
\fill (13) circle (2pt);
\fill (19) circle (2pt);
\end{tikzpicture}
\end{minipage}
\caption{Left: Admissible quartets for coarsening are marked by asterisks. Right: Coarsening of all marked elements does not result in a $1$-irregular mesh, i.e., coarsening may introduce more than one hanging node per edge. To prevent this case, we have the convention to block a quartet as soon as one of its elements has an irregular edge. Here, quartets marked in red are blocked for coarsening and only the black ones are coarsened. This ensures the $1$-irregularity of the grid. It is not always necessary to block elements marked in red, as can be seen on the right of the mesh. Here, coarsening this quartet would introduce no more than one hanging node per edge. However, during the refinement process, elements of different levels were also created in different steps. Thus, the two shown cases are therefore not distinguished and are handled in the same way.  }
\label{fig:irregular}
\end{figure}
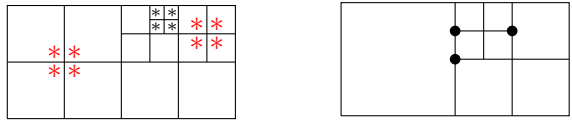

The last desired property is to comply with the $3$-Neighbor Rule which states that any element with three or more neighbors that have been refined must also be refined. This rule shall be effective in a reverse way for coarsening, i.e., an admissible quartet is not coarsened to the parent element if this operation would result in the parent element having at most one direct neighbor. Only the most recently added nodes are decisive for whether coarsening is allowed or not. That means we look at the five-point stencils of the admissible quartets contained in $\mathcal{A}$. By a clever assignment of values for the nodes in the stencils, it is easy to determine if a stencil can be removed without violating the $3$-Neighbor Rule. If it cannot be removed, the set of admissible quartets $\mathcal{A}$ is reduced by the quartet that contains this stencil. To pass this new information to its neighborhood, the process is repeated until no further changes to $\mathcal{A}$ occur. The $\clos$ step for quadrilateral meshes is described in Algorithm~\ref{alg:Qclos}. In particular, the value assignment for the nodes in the stencil is specified. An illustration of the criterion for the $3$-Neighbor Rule in Algorithm~\ref{alg:Qclos} is given in Figure~\ref{fig:numbering}.

\begin{algorithm}[h!]
\caption{$\clos$ (Quadrilateral)}\label{alg:Qclos}
\begin{algorithmic}[1]
\State \textbf{Input:} Quadrilateral mesh $\mathcal{T}$ obtained by red-refinement and admissible set $\mathcal{A}$ of quartets $Q$ for coarsening.
\State \textbf{Output:} Updated admissible set $\mathcal{A}$ of quartets $Q$ for coarsening.
\Procedure{Closure}{$\mathcal{T},\mathcal{A}$}
\ForAll{$Q \in \mathcal{A}$} \Comment{Ensure $1$-irregularity.}
\If{for any $T \in Q$ holds that $T$ has an irregular edge}
\State $\mathcal{A} \gets \mathcal{A}\setminus \left\{Q\right\}$
\EndIf
\EndFor
\While{first run or $\mathcal{A}$ changes} \Comment{Ensure 3-Neighbor Rule.}
\State $\hat{\mathcal{A}}\coloneqq \mathcal{A}$
\ForAll{$Q \in \mathcal{A}$} 
\State $\mathcal{N}_\mathrm{st}\gets \left\{ v \in \mathcal{N}~|~v \text{ is node of the 5-point stencil in } Q\right\}$\Comment{See Figure~\ref{fig:father}.}
\ForAll{$v \in \mathcal{N}_\mathrm{st}$} 
\If{$v$ is a middle node}
\State $w_v \gets 0$
\ElsIf{$v$ is a hanging node, a boundary node or $v$ is shared by two stencils of admissible \hspace*{3.3cm}quartets $Q$ contained in $\mathcal{A}$ }
\State $w_v \gets 2$
\Else
\State $w_v \gets 1$
\EndIf
\EndFor
\If{$\sum\limits_{v \in \mathcal{N}_\mathrm{st}} w_v \leq 5$}
\State $\hat{\mathcal{A}} \gets \hat{\mathcal{A}}\setminus \left\{Q\right\}$
\EndIf
\EndFor
\State $\mathcal{A} \gets \hat{\mathcal{A}}$
\EndWhile
\State \textbf{return} ${\mathcal{A}}$
\EndProcedure
\end{algorithmic}
\end{algorithm}

\begin{figure}
\centering
\begin{minipage}{0.32\textwidth}
\begin{tikzpicture}
\coordinate (1) at (0,0);
\coordinate (2) at (1.5,0);
\coordinate (3) at (3,0);
\coordinate (4) at (4.5,0);
\coordinate (5) at (4.5,1.5);
\coordinate (6) at (4.5,3);
\coordinate (7) at (3,3);
\coordinate (8) at (1.5,3);
\coordinate (9) at (0,3);
\coordinate (10) at (0,1.5);
\coordinate (11) at (1.5,1.5);
\coordinate (12) at (3,1.5);
\coordinate (13) at ($(1)!0.5!(2)$);
\coordinate (14) at ($(2)!0.5!(3)$);
\coordinate (15) at ($(3)!0.5!(4)$);
\coordinate (16) at ($(4)!0.5!(5)$);
\coordinate (17) at ($(5)!0.5!(6)$);
\coordinate (18) at ($(7)!0.5!(6)$);
\coordinate (19) at ($(8)!0.5!(7)$);
\coordinate (20) at ($(9)!0.5!(8)$);
\coordinate (21) at ($(9)!0.5!(10)$);
\coordinate (22) at ($(10)!0.5!(1)$);
\coordinate (23) at ($(2)!0.5!(11)$);
\coordinate (24) at ($(11)!0.5!(10)$);
\coordinate (25) at ($(3)!0.5!(12)$);
\coordinate (26) at ($(12)!0.5!(11)$);
\coordinate (27) at ($(12)!0.5!(5)$);
\coordinate (28) at ($(7)!0.5!(12)$);
\coordinate (29) at ($(8)!0.5!(11)$);
\coordinate (30) at ($(13)!0.5!(24)$);
\coordinate (31) at ($(14)!0.5!(26)$);
\coordinate (32) at ($(27)!0.5!(15)$);
\coordinate (33) at ($(27)!0.5!(18)$);
\coordinate (34) at ($(19)!0.5!(26)$);
\coordinate (35) at ($(20)!0.5!(24)$);
\coordinate (36) at ($(21)!0.5!(35)$);
\coordinate (37) at ($(20)!0.5!(35)$);
\coordinate (38) at ($(9)!0.5!(20)$);
\coordinate (39) at ($(9)!0.5!(21)$);
\coordinate (40) at ($(37)!0.5!(39)$);
\coordinate (41) at ($(1)!0.5!(13)$);
\coordinate (42) at ($(30)!0.5!(13)$);
\coordinate (43) at ($(30)!0.5!(22)$);
\coordinate (44) at ($(22)!0.5!(1)$);
\coordinate (45) at ($(44)!0.5!(42)$);
\coordinate (46) at ($(14)!0.5!(3)$);
\coordinate (47) at ($(3)!0.5!(25)$);
\coordinate (48) at ($(31)!0.5!(25)$);
\coordinate (49) at ($(31)!0.5!(14)$);
\coordinate (50) at ($(47)!0.5!(49)$);
\coordinate (51) at ($(15)!0.5!(4)$);
\coordinate (52) at ($(16)!0.5!(4)$);
\coordinate (53) at ($(16)!0.5!(32)$);
\coordinate (54) at ($(15)!0.5!(32)$);
\coordinate (55) at ($(52)!0.5!(54)$);
\coordinate (56) at ($(33)!0.5!(17)$);
\coordinate (57) at ($(17)!0.5!(6)$);
\coordinate (58) at ($(18)!0.5!(6)$);
\coordinate (59) at ($(18)!0.5!(33)$);
\coordinate (60) at ($(57)!0.5!(59)$);
\draw (1)--(4)--(6)--(9)--(1);
\draw (21)--(17);
\draw (10)--(5);
\draw (22)--(16);
\draw (20)--(13);
\draw (8)--(2);
\draw (19)--(14); 
\draw (7)--(3);
\draw (18)--(15);
\draw(38)--(36);
\draw (39)--(37);
\draw (43)--(41);
\draw (44)--(42);
\draw (49)--(47);
\draw (48)--(46);
\draw (54)--(52);
\draw (51)--(53);
\draw (59)--(57);
\draw (58)--(56);
\node at (40) [below right=-1mm] {\tiny $\ast$};
\node at (40) [below left=-1mm] {\tiny$\ast$};
\node at (40) [above right=-1mm] {\tiny$\ast$};
\node at (40) [above left=-1mm] {\tiny$\ast$};
\node at (45) [below right=-1mm] {\tiny $\ast$};
\node at (45) [below left=-1mm] {\tiny$\ast$};
\node at (45) [above right=-1mm] {\tiny$\ast$};
\node at (45) [above left=-1mm] {\tiny$\ast$};
\node at (50) [below right=-1mm] {\tiny $\ast$};
\node at (50) [below left=-1mm] {\tiny$\ast$};
\node at (50) [above right=-1mm] {\tiny$\ast$};
\node at (50) [above left=-1mm] {\tiny$\ast$};
\node at (55) [below right=-1mm] {\tiny $\ast$};
\node at (55) [below left=-1mm] {\tiny$\ast$};
\node at (55) [above right=-1mm] {\tiny$\ast$};
\node at (55) [above left=-1mm] {\tiny$\ast$};
\node at (60) [below right=-1mm] {\tiny $\ast$};
\node at (60) [below left=-1mm] {\tiny$\ast$};
\node at (60) [above right=-1mm] {\tiny$\ast$};
\node at (60) [above left=-1mm] {\tiny$\ast$};
\node at (34) [below right=-1mm] {\textcolor{red}{$\ast$}};
\node at (34) [below left=-1mm] {\textcolor{red}{$\ast$}};
\node at (34) [above right=-1mm] {\textcolor{red}{$\ast$}};
\node at (34) [above left=-1mm] {\textcolor{red}{$\ast$}};
\node at (38) [above] {\scriptsize 2};
\node at (43) [above] {\scriptsize 2};
\node at (48) [above] {\scriptsize 2};
\node at (53) [above] {\scriptsize 2};
\node at (58) [above] {\scriptsize 2};
\node at (19) [above] {\scriptsize 2};
\node at (36) [below] {\scriptsize 2};
\node at (41) [below] {\scriptsize 2};
\node at (56) [below] {\scriptsize 2};
\node at (51) [below] {\scriptsize 2};
\node at (46) [below] {\scriptsize 2};
\node at (26) [below left] {\scriptsize 1};
\node at (39) [left] {\scriptsize 2};
\node at (44) [left] {\scriptsize 2};
\node at (49) [left] {\scriptsize 2};
\node at (54) [left] {\scriptsize 2};
\node at (59) [left] {\scriptsize 2};
\node at (29) [below left] {\scriptsize 1};
\node at (37) [right] {\scriptsize 2};
\node at (42) [right] {\scriptsize 2};
\node at (47) [right] {\scriptsize 2};
\node at (52) [right] {\scriptsize 2};
\node at (57) [right] {\scriptsize 2};
\node at (28) [below right] {\scriptsize 1};
\fill (39) circle (2pt);
\fill (38) circle (2pt);
\fill (36) circle (2pt);
\fill (37) circle (2pt);
\fill (44) circle (2pt);
\fill (41) circle (2pt);
\fill (42) circle (2pt);
\fill (43) circle (2pt);
\fill (46) circle (2pt);
\fill (49) circle (2pt);
\fill (47) circle (2pt);
\fill (48) circle (2pt);
\fill (51) circle (2pt);
\fill (52) circle (2pt);
\fill (54) circle (2pt);
\fill (53) circle (2pt);
\fill (57) circle (2pt);
\fill (58) circle (2pt);
\fill (59) circle (2pt);
\fill (56) circle (2pt);
\fill (19) circle (2pt);
\fill (29) circle (2pt);
\fill (26) circle (2pt);
\fill (28) circle (2pt);
\end{tikzpicture}
\end{minipage}\hspace*{15mm}
\begin{minipage}{0.32\textwidth}
\begin{tikzpicture}
\coordinate (1) at (0,0);
\coordinate (2) at (1.5,0);
\coordinate (3) at (3,0);
\coordinate (4) at (4.5,0);
\coordinate (5) at (4.5,1.5);
\coordinate (6) at (4.5,3);
\coordinate (7) at (3,3);
\coordinate (8) at (1.5,3);
\coordinate (9) at (0,3);
\coordinate (10) at (0,1.5);
\coordinate (11) at (1.5,1.5);
\coordinate (12) at (3,1.5);
\coordinate (13) at ($(1)!0.5!(2)$);
\coordinate (14) at ($(2)!0.5!(3)$);
\coordinate (15) at ($(3)!0.5!(4)$);
\coordinate (16) at ($(4)!0.5!(5)$);
\coordinate (17) at ($(5)!0.5!(6)$);
\coordinate (18) at ($(7)!0.5!(6)$);
\coordinate (19) at ($(8)!0.5!(7)$);
\coordinate (20) at ($(9)!0.5!(8)$);
\coordinate (21) at ($(9)!0.5!(10)$);
\coordinate (22) at ($(10)!0.5!(1)$);
\coordinate (23) at ($(2)!0.5!(11)$);
\coordinate (24) at ($(11)!0.5!(10)$);
\coordinate (25) at ($(3)!0.5!(12)$);
\coordinate (26) at ($(12)!0.5!(11)$);
\coordinate (27) at ($(12)!0.5!(5)$);
\coordinate (28) at ($(7)!0.5!(12)$);
\coordinate (29) at ($(8)!0.5!(11)$);
\coordinate (30) at ($(13)!0.5!(24)$);
\coordinate (31) at ($(14)!0.5!(26)$);
\coordinate (32) at ($(27)!0.5!(15)$);
\coordinate (33) at ($(27)!0.5!(18)$);
\coordinate (34) at ($(19)!0.5!(26)$);
\coordinate (35) at ($(20)!0.5!(24)$);
\coordinate (36) at ($(21)!0.5!(35)$);
\coordinate (37) at ($(20)!0.5!(35)$);
\coordinate (38) at ($(9)!0.5!(20)$);
\coordinate (39) at ($(9)!0.5!(21)$);
\coordinate (40) at ($(37)!0.5!(39)$);
\coordinate (41) at ($(1)!0.5!(13)$);
\coordinate (42) at ($(30)!0.5!(13)$);
\coordinate (43) at ($(30)!0.5!(22)$);
\coordinate (44) at ($(22)!0.5!(1)$);
\coordinate (45) at ($(44)!0.5!(42)$);
\coordinate (46) at ($(14)!0.5!(3)$);
\coordinate (47) at ($(3)!0.5!(25)$);
\coordinate (48) at ($(31)!0.5!(25)$);
\coordinate (49) at ($(31)!0.5!(14)$);
\coordinate (50) at ($(47)!0.5!(49)$);
\coordinate (51) at ($(15)!0.5!(4)$);
\coordinate (52) at ($(16)!0.5!(4)$);
\coordinate (53) at ($(16)!0.5!(32)$);
\coordinate (54) at ($(15)!0.5!(32)$);
\coordinate (55) at ($(52)!0.5!(54)$);
\coordinate (56) at ($(33)!0.5!(17)$);
\coordinate (57) at ($(17)!0.5!(6)$);
\coordinate (58) at ($(18)!0.5!(6)$);
\coordinate (59) at ($(18)!0.5!(33)$);
\coordinate (60) at ($(57)!0.5!(59)$);
\draw (1)--(4)--(6)--(9)--(1);
\draw (21)--(17);
\draw (10)--(5);
\draw (22)--(16);
\draw (20)--(13);
\draw (8)--(2);
\draw (19)--(14); 
\draw (7)--(3);
\draw (18)--(15);
\draw(38)--(36);
\draw (39)--(37);
\draw (43)--(41);
\draw (44)--(42);
\draw (49)--(47);
\draw (48)--(46);
\draw (54)--(52);
\draw (51)--(53);
\node at (40) [below right=-1mm] {\tiny $\ast$};
\node at (40) [below left=-1mm] {\tiny$\ast$};
\node at (40) [above right=-1mm] {\tiny$\ast$};
\node at (40) [above left=-1mm] {\tiny$\ast$};
\node at (45) [below right=-1mm] {\tiny $\ast$};
\node at (45) [below left=-1mm] {\tiny$\ast$};
\node at (45) [above right=-1mm] {\tiny$\ast$};
\node at (45) [above left=-1mm] {\tiny$\ast$};
\node at (50) [below right=-1mm] {\tiny $\ast$};
\node at (50) [below left=-1mm] {\tiny$\ast$};
\node at (50) [above right=-1mm] {\tiny$\ast$};
\node at (50) [above left=-1mm] {\tiny$\ast$};
\node at (55) [below right=-1mm] {\tiny $\ast$};
\node at (55) [below left=-1mm] {\tiny$\ast$};
\node at (55) [above right=-1mm] {\tiny$\ast$};
\node at (55) [above left=-1mm] {\tiny$\ast$};
\node at (34) [below right=-1mm] {$\ast$};
\node at (34) [below left=-1mm] {$\ast$};
\node at (34) [above right=-1mm] {$\ast$};
\node at (34) [above left=-1mm] {$\ast$};
\node at (33) [below right=-1mm] {$\ast$};
\node at (33) [below left=-1mm] {$\ast$};
\node at (33) [above right=-1mm] {$\ast$};
\node at (33) [above left=-1mm] {$\ast$};
\node at (38) [above] {\scriptsize 2};
\node at (43) [above] {\scriptsize 2};
\node at (48) [above] {\scriptsize 2};
\node at (53) [above] {\scriptsize 2};
\node at (19) [above] {\scriptsize 2};
\node at (36) [below] {\scriptsize 2};
\node at (41) [below] {\scriptsize 2};
\node at (51) [below] {\scriptsize 2};
\node at (46) [below] {\scriptsize 2};
\node at (26) [below left] {\scriptsize 1};
\node at (39) [left] {\scriptsize 2};
\node at (44) [left] {\scriptsize 2};
\node at (49) [left] {\scriptsize 2};
\node at (54) [left] {\scriptsize 2};
\node at (29) [below left] {\scriptsize 1};
\node at (37) [right] {\scriptsize 2};
\node at (42) [right] {\scriptsize 2};
\node at (47) [right] {\scriptsize 2};
\node at (52) [right] {\scriptsize 2};
\node at (28) [below right] {\scriptsize 2};
\node at (18) [above] {\scriptsize 2};
\node at (17) [right] {\scriptsize 2};
\node at (27) [below left] {\scriptsize 1};
\fill (39) circle (2pt);
\fill (38) circle (2pt);
\fill (36) circle (2pt);
\fill (37) circle (2pt);
\fill (44) circle (2pt);
\fill (41) circle (2pt);
\fill (42) circle (2pt);
\fill (43) circle (2pt);
\fill (46) circle (2pt);
\fill (49) circle (2pt);
\fill (47) circle (2pt);
\fill (48) circle (2pt);
\fill (51) circle (2pt);
\fill (52) circle (2pt);
\fill (54) circle (2pt);
\fill (53) circle (2pt);
\fill (19) circle (2pt);
\fill (29) circle (2pt);
\fill (26) circle (2pt);
\fill (28) circle (2pt);
\fill (17) circle (2pt);
\fill (18) circle (2pt);
\fill (27) circle (2pt);
\end{tikzpicture}
\end{minipage}
\caption{Two exemplary meshes with the assigned weights $w_v$ for all 5-point stencil nodes $v$ contained in $\mathcal{A}$ (black dots).  As a 5-node stencil, we consider the 5 newest nodes of an admissible quartet of elements for coarsening. Hanging nodes, boundary nodes and nodes shared by two stencils of an admissible quartet of elements receive a weight of two. Middle nodes receive a weight of zero which is omitted in the presentation. Every other node of a stencil gets the weight one. If the sum of weights per stencil is greater than 5 coarsening is performed (black asterisks), if it is less than or equal to 5 the quartet of elements containing this stencil is blocked for coarsening (red asterisks). This criterion applied in a loop ensures that the 3-Neighbor Rule is maintained.}
\label{fig:numbering}
\end{figure}
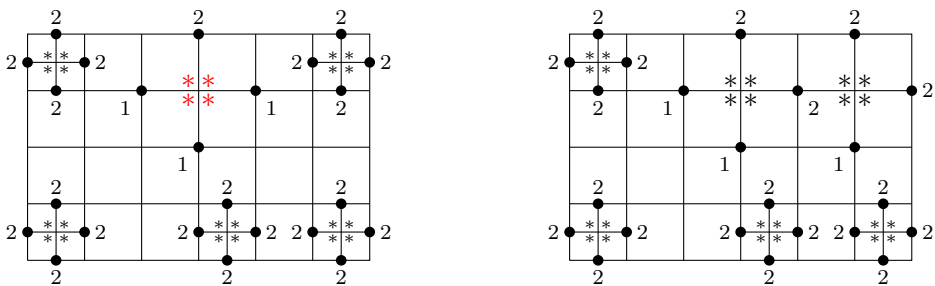

For triangular meshes, the situation is analogous. Here, an admissible quartet of elements is also eliminated from the set of admissible quartets $\mathcal{A}$ as soon as one of the elements in an admissible quartet has an irregular edge. Note, that an admissible quartet consists of one middle element and its three neighboring elements. The middle element cannot have an irregular edge if it is in an admissible quartet. Thus, only the other three elements sharing their edges with neighboring elements may be affected. This ensures the $1$-irregularity.

For triangular irregular meshes, the 2-Neighbor Rule is not used. However, in case of a red-green refinement a 2-Neighbor Rule is needed. 

In this case, the same ideas can be used with small modifications. Here, the nodes of the middle elements of admissible quartets are considered. The assignment of weights $w_v$ for all middle element nodes $v$ is performed analogously to the quadrilateral case, except that there are no middle nodes. As a further change, the sum must be less or equal than four instead of five. Algorithm~\ref{alg:Tclos} explains the step $\clos$ for triangular meshes in more detail. 

\begin{algorithm}[h!]
\caption{ $\clos$ (Triangular)}\label{alg:Tclos}
\begin{algorithmic}[1]
\State \textbf{Input:} Triangular mesh $\mathcal{T}$ obtained by red-refinement and admissible set $\mathcal{A}$ of quartets $Q$ for coarsening.
\State \textbf{Output:} Updated admissible set $\mathcal{A}$ of quartets $Q$ for coarsening.
\Procedure{Closure}{$\mathcal{T},\mathcal{A}$}
\ForAll{$Q \in \mathcal{A}$} \Comment{Ensure $1$-irregularity.}
\If{for any $T \in Q$ holds that $T$ has an irregular edge}
\State $\mathcal{A} \gets \mathcal{A}\setminus \left\{Q\right\}$
\EndIf
\EndFor
\If{2-Neighbor Rule is in use}  \Comment{Optional: Ensure 2-Neighbor Rule.}
\While{first run or $\mathcal{A}$ changes} 
\State $\hat{\mathcal{A}}\coloneqq \mathcal{A}$
\ForAll{$Q \in \mathcal{A}$} 
\State $\mathcal{N}_{M} \gets \left\{ v \in \mathcal{N}(T)~|~T \text{ is middle element in } Q\right\}$
\ForAll{$v \in \mathcal{N}_{M}$} 
\If{$v$ is a hanging node, a boundary node or is shared by two middle elements of \hspace*{3.2cm} admissible quartets $Q$ contained in $\mathcal{A}$ }
\State $w_v \gets 2$
\Else
\State $w_v \gets1$
\EndIf
\EndFor
\If{$\sum\limits_{v \in \mathcal{N}_M} w_v \leq 4$}
\State $\hat{\mathcal{A}} \gets \hat{\mathcal{A}}\setminus \left\{Q\right\}$
\EndIf
\EndFor
\State $\mathcal{A} \gets \hat{\mathcal{A}}$
\EndWhile
\EndIf
\State \textbf{return} ${\mathcal{A}}$
\EndProcedure
\end{algorithmic}
\end{algorithm}

With all these easy-to-verify criteria, we have now narrowed down the admissible quartets for coarsening to include adaptivity and adherence of rules. These quartets are then finally updated in the next step.

\subsubsection{$\upd$}

The step $\upd$ is the last step performed within the coarsening algorithm. The elements to be coarsened are specified in quartets in $\mathcal{A}$ and only need to be joined to their parent element, see Figure~\ref{fig:father}. To ensure that the vertices are arranged counterclockwise within the parent element, the elements in an admissible quartet must be sorted accordingly. After sorting, the quartet can be coarsened to their parent element. All other elements are not changed. Coarsening may create new irregular edges and remove old ones. Therefore the step $\upd$ also includes the update of the irregularity data. Algorithm~\ref{alg:upd} outlines this last step.

\begin{algorithm}[h!]
\caption{ $\upd$ }\label{alg:upd}
\begin{algorithmic}[1]
\State \textbf{Input:} Mesh $\mathcal{T}$ obtained by red-refinement and admissible set $\mathcal{A}$ of quartets $Q$ for coarsening.
\State \textbf{Output:} Coarsened mesh $\hat{\mathcal{T}}$.
\Procedure{Update Mesh}{$\mathcal{T},\mathcal{A}$}
\ForAll{$Q \in \mathcal{A}$}
\State Coarsen $T \in Q$ to their parent element according to Figure~\ref{fig:father}.
\EndFor
\State Update irregularity data.
\State \textbf{return} $\hat{\mathcal{T}}$
\EndProcedure
\end{algorithmic}
\end{algorithm}

\medskip

We have now seen the basic concept of the coarsening algorithm and its single components for red meshes. In the further course of the work, the data structures and the exact implementation of the presented points will be dealt with much more concretely. We move their discussion to Section~\ref{sect:implement}. For now, we focus on some important results for \textsc{CoarsenR} from the algorithmic framework in Algorithm~\ref{alg:algo1}.

\subsubsection{Properties of \textsc{CoarsenR}}

We would now like to discuss important properties of the meshes that the \textsc{CoarsenR} algorithm outputs. It is irrelevant whether we are talking about triangular or quadrilateral meshes: the results apply equally to both shapes. We want to note that Algorithm~\ref{alg:algo1} in the specific case of \textsc{QcoarsenR} calls Algorithm~\ref{alg:Qadm},\ref{alg:mark},\ref{alg:Qclos} and \ref{alg:upd} and \textsc{TcoarsenR} calls Algorithm~\ref{alg:Tadm},\ref{alg:mark},\ref{alg:Tclos} and \ref{alg:upd}.

We have constructed the algorithms in such a way that it retains all the properties that were obtained during the refinement. By construction, it holds

\begin{theorem}[Output \textsc{CoarsenR}]\label{th:1}
Let $\mathcal{T}$ be a $1$-irregular triangulation obtained by red refinement of an initial triangulation $\mathcal{T}_0$ and $\mathcal{T}_\mathrm{mark} \subset \mathcal{T}$. Then \textsc{CoarsenR}$(\mathcal{T},\mathcal{T}_\mathrm{mark})$ from Algorithm~\ref{alg:algo1} generates a $1$-irregular and shape regular triangulation. Further, the $d$-Neighbor Rule holds with $d=3$ for quadrilaterals and, if the 2-Neighbor Rule is in use for triangular meshes, with $d$=2 for triangles.
\end{theorem}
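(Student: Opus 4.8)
The plan is to check, step by step along the pipeline $\adm \to \mar \to \clos \to \upd$ of Algorithm~\ref{alg:algo1}, that no step can destroy $1$-irregularity, shape regularity or the $d$-Neighbor Rule, and that the $\clos$ step actively re-establishes the latter two after $\mar$ has possibly spoiled them. The observation that organizes everything is that $\adm$, $\mar$ and both passes of $\clos$ only shrink the working set $\mathcal{A}$, while $\upd$ does nothing but replace, for each surviving $Q \in \mathcal{A}$, its four elements by their common parent (Figure~\ref{fig:father}). Hence it suffices to establish three things: (i) every $Q$ produced by $\adm$ is genuinely the set of four children of one red refinement, with a well-defined parent; (ii) the first loop of $\clos$ forces the output to be $1$-irregular; and (iii) the while-loop of $\clos$ forces the $d$-Neighbor Rule.

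For (i), I would first isolate the structural lemma behind Algorithms~\ref{alg:Qadm} and \ref{alg:Tadm}, proved by induction on the number of red refinements applied to $\mathcal{T}_0$ using the storage conventions of Section~\ref{sect:datastructure}: in a $1$-irregular red-refined mesh, the center of a red-refined quadrilateral is exactly a node $v \notin \mathcal{N}_0$ that, in each of the four elements containing it, occupies the position two steps counterclockwise after the oldest vertex; and a triangle is the central triangle of a red refinement iff the three ``oldest-node'' labels its edges receive in Algorithm~\ref{alg:Tadm} are not all equal. Granting this, every $Q \in \mathcal{A}$ is a true sibling quartet with parent $P$, and since the criterion can succeed only when the four adjacent elements are all leaves, the parents that get coarsened are pairwise non-overlapping and each had all its children as leaves; so $\upd$ retiles each such region by a single positive-area element with no new overlaps, and $\hat{\mathcal{T}}$ is again a triangulation, in fact one obtainable from $\mathcal{T}_0$ by a shorter sequence of red refinements. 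Shape regularity of $\hat{\mathcal{T}}$ is then immediate from the red-refinement discussion in Section~\ref{sect:refine}: for triangles the (finitely many) similarity classes are unchanged, and for convex quadrilaterals the aspect-ratio and angle bounds inherited from $\mathcal{T}_0$ survive.

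For (ii), the point is that coarsening creates \emph{no} new node: $\upd$ only deletes the nodes of a stencil (or of a middle element). An edge of $\hat{\mathcal{T}}$ that already lived in $\mathcal{T}$ can therefore only lose hanging nodes (when a finer neighbor is removed), while every truly new edge is a parent edge $e = e' \cup e''$ whose single interior node is the shared midpoint $m = e' \cap e''$; since the first loop of $\clos$ has already discarded every quartet containing an element with an irregular edge, $e'$ and $e''$ bore no hanging node in $\mathcal{T}$, so $e$ acquires at most $m$. Hence $\hat{\mathcal{T}}$ is $1$-irregular. For (iii), the reverse $d$-Neighbor Rule says no freshly created parent $P$ may end up unrefined with $d$ or more red-refined (indirect) neighbors, and no untouched element may either. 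The heart of the argument is that the weighted stencil sum of Algorithm~\ref{alg:Qclos} (resp.\ Algorithm~\ref{alg:Tclos}) is a sound and complete local certificate for this: a weight-$1$ edge-midpoint of $P$ marks precisely a side facing a finer element that is \emph{not} itself being coarsened, i.e.\ a refined neighbor of $P$, whereas weight $2$ marks the sides that will not be such a neighbor (boundary, or, via the ``shared by two admissible stencils'' clause, a side that will face another fresh parent) and weight $0$ the center, which always vanishes; a short count then shows that the number of refined neighbors of $P$ equals the number of weight-$1$ sides, which is $\le d-1$ exactly when the stencil sum exceeds $d+2$ (the thresholds $5$ for quadrilaterals and $4$ for triangles). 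Because coarsening one quartet alters exactly these incidences for its neighbors, re-running the loop until $\mathcal{A}$ stabilizes propagates the constraint, and the loop terminates since $\mathcal{A}$ strictly decreases in a finite family. Together with the hypothesis that $\mathcal{T}$ already obeys the $d$-Neighbor Rule (and the fact that coarsening an adjacent quartet can only give an untouched element more direct neighbors, except in configurations the weight test already forbids), this yields the rule for $\hat{\mathcal{T}}$.

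The part I expect to be the real work is (iii): turning the informal dictionary between the integer weights and the post-coarsening neighbor relations into a rigorous case distinction, in particular verifying that the ``shared by two stencils'' clause plus the fixed-point iteration really do catch every way in which one coarsening operation can endanger the $d$-Neighbor Rule at a neighboring element, and that the triangular count with threshold $4$ is the faithful analogue of the quadrilateral count with threshold $5$. By comparison, (i) and (ii) are essentially the bookkeeping already implicit in the data-structure conventions of Section~\ref{sect:datastructure}, and the harmlessness of $\mar$ needs no separate argument because $\clos$ operates on whatever set $\mathcal{A}$ it is handed.
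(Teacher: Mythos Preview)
The paper does not supply a proof of Theorem~\ref{th:1}; it merely prefaces the statement with the sentence ``By construction, it holds'' and moves on. Your proposal is precisely the argument that such an assertion invites the reader to reconstruct: you walk through the pipeline, verify that $\adm$ produces genuine sibling quartets (so $\upd$ cannot damage shape regularity), that the first loop of $\clos$ guarantees $1$-irregularity of the output, and that the weighted-sum test in the while-loop encodes the $d$-Neighbor Rule for the newly created parents. Your reading of the weights is correct --- after the irregularity pass, a weight-$1$ midpoint in the stencil of $Q$ corresponds exactly to a side on which the parent $P$ will face a still-refined neighbor after $\upd$, and the thresholds $5$ (quadrilaterals) and $4$ (triangles) translate to ``at most $d-1$ such sides'' --- and your observation that an untouched element can only \emph{lose} refined neighbors when an adjacent quartet is coarsened disposes of the elements outside $\mathcal{A}$. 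The fixed-point iteration then handles the interaction between quartets, and termination is clear since $\mathcal{A}$ is finite and strictly decreasing.

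In short, your outline is sound and is essentially what the paper leaves implicit under ``by construction''; there is no alternative proof in the paper to compare against, and you have correctly identified step~(iii) as the place where the real bookkeeping lives.
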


We also want our algorithm to be able to recover the initial triangulation. In our case this can be done without any further conditions to the triangulation.

\begin{theorem}[Coarsening]\label{th:coarsen}
Let $\mathcal{T}$ be an arbitrary red refinement of an initial triangulation $\mathcal{T}_0$. Let $(\mathcal{T}^{(i)})_{i=0,1,\ldots}$ be a sequence of triangulations obtained by Algorithm~\ref{alg:algo1}, i.e.,
\[ \mathcal{T}^{(0)}\coloneqq \mathcal{T} \quad\text{ and }\quad \mathcal{T}^{(i+1)} = \textsc{CoarsenR}(\mathcal{T}^{(i)},\mathcal{T}^{(i)}).\]
Then after a finite number of steps $M \in \mathbb{N}_0$, we obtain 
\[ \mathcal{T}^{(M)} = \mathcal{T}_0.\]
\end{theorem}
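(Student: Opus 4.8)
The plan is to run a well-founded descent on the maximal refinement level. Since $\mathcal{T}$ arises from $\mathcal{T}_0$ by a finite sequence of red refinements, the associated sequence is nested, so every $T\in\mathcal{T}$ lies in a unique $T_0\in\mathcal{T}_0$; as each red refinement divides the area by four, the number $\ell(T):=\log_4(|T_0|/|T|)\in\mathbb{N}_0$ is well defined, independent of the refinement history, and $L(\mathcal{T}):=\max_{T\in\mathcal{T}}\ell(T)$ is finite with $L(\mathcal{T})=0\iff\mathcal{T}=\mathcal{T}_0$. Moreover every triangulation produced by \textsc{CoarsenR} is again a red refinement of $\mathcal{T}_0$, since coarsening a quartet of children to their common parent exactly undoes one red refinement; hence the level function remains available along the whole sequence $(\mathcal{T}^{(i)})$. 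The theorem then follows from the single claim: if $L(\mathcal{T}^{(i)})=L\ge 1$, then $L(\mathcal{T}^{(i+1)})\le L-1$. Iterating the claim gives $L(\mathcal{T}^{(M)})=0$, i.e.\ $\mathcal{T}^{(M)}=\mathcal{T}_0$, for the explicit value $M:=L(\mathcal{T})$.

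To prove the claim I would show that, with every element marked, \textsc{CoarsenR} coarsens \emph{every} maximal quartet, meaning the set of the four children of a level-$(L-1)$ element of $\mathcal{T}^{(i)}$ that has been red-refined. First, every level-$L$ element lies in such a quartet: its parent $P$ was red-refined, all four children of $P$ are present, and none of them is refined further (that would create level $L+1$), so these four children are exactly the leaves below $P$. Consequently, once all maximal quartets are coarsened to their parents, no level-$L$ element survives and the new maximal level is $\le L-1$. Next I would verify the four steps of Algorithm~\ref{alg:algo1} on a maximal quartet $Q$ with parent $P$. The step $\adm$ detects $Q$: the five new nodes of the red refinement of $P$ (resp.\ its middle triangle) belong to the latest generation within each of the four elements, which is precisely the configuration recognized by Algorithm~\ref{alg:Qadm} (resp.\ Algorithm~\ref{alg:Tadm})—using that the centre of $P$, resp.\ the nodes of the middle triangle, are not initial nodes, and that the middle triangle has three genuine direct neighbours, namely the unrefined corner children of $P$. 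Since everything is marked, $\mar$ keeps $Q$. In $\clos$, the $1$-irregularity phase cannot discard $Q$, because a level-$L$ element has no irregular edge (there is no finer element in $\mathcal{T}^{(i)}$).

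The crux, and the step I expect to be the main obstacle, is showing that the $d$-Neighbour loop of $\clos$ discards no maximal quartet. I would let $\mathcal{A}_L$ denote the set of all maximal quartets and prove, by induction over the passes of the while-loop, that the invariant $\mathcal{A}_L\subseteq\mathcal{A}$ is preserved, because it forces the weight sum of each $Q\in\mathcal{A}_L$ above the threshold ($5$ for quadrilaterals, $4$ for triangles). The stencil of $Q$ consists of the weight-$0$ centre of $P$ together with the midpoints $m_1,\dots,m_k$ of the $k$ edges of $P$ ($k=4$ for quadrilaterals, $k=3$ for triangles), so it suffices to show each $m_j$ has weight $2$. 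Fixing the edge $e_j$ of $P$ with midpoint $m_j$, I would distinguish three cases: $e_j\subset\partial\Omega$, so $m_j$ is a boundary node; the other side of $e_j$ is not split at $m_j$, so $m_j$ lies in the relative interior of a neighbouring edge and hence is a hanging node; or the other side is split at $m_j$, in which case some element has an edge equal to one half of $e_j$—and here the geometric bookkeeping enters: because all edges of a level-$\ell$ descendant of a level-$0$ element are the level-$0$ edge lengths divided by $2^\ell$, that element must be at level $L$, hence unrefined, and its parent (collinear with $e_j$ and of the same length) is exactly the level-$(L-1)$ neighbour of $P$ across $e_j$; that neighbour was red-refined, so its children form another member of $\mathcal{A}_L$ whose stencil also contains $m_j$. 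In every case $m_j$ is a boundary node, a hanging node, or shared by two stencils of quartets in $\mathcal{A}$, so $w_{m_j}=2$ and the stencil sum is $2k\ge 6$, above the threshold. Therefore $\mathcal{A}_L$ survives $\clos$, the step $\upd$ merges these pairwise disjoint quartets (children of distinct, interior-disjoint level-$(L-1)$ elements) to their parents, and $\mathcal{T}^{(i+1)}$ has maximal level $\le L-1$, proving the claim. The delicate point is the "split" case above: matching the purely combinatorial datum (a half-edge appearing as an element edge) to the geometry of red refinement and excluding any subdivision of the far side of $e_j$ not produced by a single red refinement of $P$'s neighbour; here the $1$-irregularity of $\mathcal{T}^{(i)}$ (guaranteed by Theorem~\ref{th:1} for the iterates) is what limits the far side to exactly the three listed configurations.
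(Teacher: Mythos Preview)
Your proposal is correct, but it takes a markedly different route from the paper. The paper reduces Theorem~\ref{th:coarsen} to a much weaker key lemma: it shows only that whenever $\mathcal{T}\neq\mathcal{T}_0$ the set $\mathcal{A}$ entering $\upd$ is \emph{non-empty}, so at least one quartet is coarsened and the element count strictly decreases. Finiteness of $M$ then follows immediately; the sharp bound $M=L(\mathcal{T})$ is stated only as an unproven remark supported by numerical evidence. Your argument is strictly stronger: by showing that \emph{every} maximal-level quartet survives all phases of $\clos$, you prove that the maximal level drops by at least one per step and hence obtain the explicit value $M=L(\mathcal{T})$. The price is the case analysis in the $d$-Neighbour phase, which the paper avoids entirely; its proof of non-emptiness simply observes that a quartet at the globally finest level cannot be blocked by either the $1$-irregularity test or the $d$-Neighbour test, without ever computing stencil weights.

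One point worth tightening in your write-up: the claim that an element across a half-edge $f\subset e_j$ with $f$ as an edge ``must be at level $L$'' cannot be read off from edge lengths alone unless one restricts to the line through $e_j$ and traces both refinement trees back to the common initial edge. The clean way to run your case~3 is via the uniform refinement $\mathcal{U}_{L-1}$ of $\mathcal{T}_0$: the parent $P$ has a unique neighbour $P'\in\mathcal{U}_{L-1}$ across $e_j$, and the $1$-irregularity of $\mathcal{T}^{(i)}$ forces the leaf on the far side to be either $P'$ itself (so $m_j$ is hanging) or a child of $P'$ (so the children of $P'$ form a second quartet in $\mathcal{A}_L$ sharing $m_j$); any coarser leaf would put two hanging nodes on a single edge. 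With this formulation your invariant $\mathcal{A}_L\subseteq\mathcal{A}$ goes through cleanly.
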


For this to be true, the input set $\mathcal{A}$ in $\upd$ has to be non-empty for an arbitrary red triangulation $\mathcal{T}\neq\mathcal{T}_0$. This way, $\upd$ always coarsens elements to their parent element and after a finite number of coarsening steps, the initial mesh is regained. That coarsening cannot extend beyond the initial mesh is given in the functions $\adm$. For quadrilaterals, the condition $v_{i3} \not\in \mathcal{N}_0$ in Algorithm~\ref{alg:Qadm} ensures that only middle nodes are considered that were not already included in the initial grid. In the case of triangles, the function $\old$ in Algorithm~\ref{alg:Tadm} is responsible for not coarsening further. If the function $\old$ is applied to an initial grid, all nodes are of the same age. This means that no middle elements are detected that belong to the initial mesh, and therefore no further coarsening beyond the initial mesh is performed.
We thus prove

\begin{lemma}[Non-emptyness of $\mathcal{A}$]
Let $\mathcal{T}$ be an arbitrary $1$-irregular triangulation consisting of red patterns and $\mathcal{T}\not= \mathcal{T}_0$ applies. Then, the input set $\mathcal{A}$ of $\upd$ is non-empty when \textsc{CoarsenR}$(\mathcal{T},\mathcal{T})$ is called.
\end{lemma}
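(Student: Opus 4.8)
The plan is to exhibit an explicit non-empty sub-family $\mathcal{D}\subseteq\mathcal{A}$ that survives $\adm$, $\mar$ and $\clos$, so that the set fed to $\upd$ is non-empty. Since $\mathcal{T}$ is obtained from $\mathcal{T}_0$ by red refinements, every $T\in\mathcal{T}$ has a well-defined refinement level (the length of its ancestry chain in the refinement forest rooted at $\mathcal{T}_0$); let $L$ be the maximal level occurring in $\mathcal{T}$. Because $\mathcal{T}\neq\mathcal{T}_0$ we have $L\ge 1$, so there is an element $P$ at level $L-1$ that was red-refined, and all of its children sit at level $L$, hence are leaves and belong to $\mathcal{T}$. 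Call a \emph{deepest quartet} the set of children of such a $P$ (four subquadrilaterals, resp.\ the middle element together with its three corner children), and let $\mathcal{D}$ be the set of all deepest quartets; then $\mathcal{D}\neq\emptyset$.

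First I would check that $\adm$ places every $Q\in\mathcal{D}$ into $\mathcal{A}$. In a quadrilateral child of $P$ the inherited corner of $P$ is the unique oldest vertex, and traversing the vertices counter-clockwise the centre node $v$ of $P$'s red refinement sits two positions further on; hence after the cyclic normalisation of Algorithm~\ref{alg:Qadm} the centre node is exactly $v_{i3}$, it is a vertex of precisely the four children of $P$ (it is interior to $P$), and $v\notin\mathcal{N}_0$, so the quartet is detected. For triangles the middle element $M$ has its three vertices among the edge midpoints of $P$, all strictly younger than any corner of $P$; thus the value $v_{\mathrm{old},e}$ assigned in Algorithm~\ref{alg:Tadm} to each of the three interior edges of $M$ is the (distinct) corner of $P$ lying in the adjacent corner child, so $M$ is flagged as a middle element and, having three interior edges, has three direct neighbours — so again the quartet is detected. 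Because $\textsc{CoarsenR}(\mathcal{T},\mathcal{T})$ marks every element, $\mar$ deletes nothing, and $\mathcal{D}\subseteq\mathcal{A}$ after $\mar$.

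The heart of the argument is that $\clos$ removes no deepest quartet. I would use two structural facts about nested $1$-irregular red meshes: (i) all elements that ever carry a fixed segment as a full edge lie at the same refinement level (proved by induction that doubles the edge step by step until one reaches an edge of $\mathcal{T}_0$); and (ii) an element at the maximal level $L$ has no irregular edge, since a hanging node on one of its edges would be the midpoint of that edge, created by red-refining the element on the other side that carries the edge as a full edge, which by (i) also lies at level $L$, forcing a level-$(L+1)$ element and contradicting maximality. Fact (ii) shows that the $1$-irregularity loop of Algorithm~\ref{alg:Qclos}/\ref{alg:Tclos} keeps all of $\mathcal{D}$. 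For the $d$-Neighbor loop, fix $Q\in\mathcal{D}$ with parent $P$ and consider an edge $\tilde e$ of $P$ with midpoint $m$, which is one of the nodes that receives a weight. By $1$-irregularity and (ii), exactly one of three situations occurs: $\tilde e$ is a boundary edge, so $m$ is a boundary node; or the opposite side of $\tilde e$ is a single element having $\tilde e$ as a full edge, so $m$ is a hanging node for that element; or the opposite side of $\tilde e$ is split into two leaf elements meeting at $m$, whose common parent $K$ carries $\tilde e$ as a full edge and hence, by (i) and maximality, is itself a level-$(L-1)$ element all of whose children are leaves, i.e.\ $K$ generates another deepest quartet whose $5$-point stencil (resp.\ middle element) also contains $m$. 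In every case $m$ receives weight $2$. Consequently, as long as all of $\mathcal{D}$ remains in $\mathcal{A}$, the weighted sum attached to each $Q\in\mathcal{D}$ equals $8>5$ in the quadrilateral case and $6>4$ in the triangular case, so no $Q\in\mathcal{D}$ is scheduled for deletion. An induction over the iterations of the \texttt{while} loop (all of $\mathcal{D}$ is present at the first iteration; if it is present at the start of an iteration the previous line shows it is present at the end) yields $\mathcal{D}\subseteq\mathcal{A}$ throughout $\clos$. Hence the set $\mathcal{A}$ handed to $\upd$ contains $\mathcal{D}\neq\emptyset$, which is exactly the assertion.

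The main obstacle, and the only place requiring real care, is the verification of the two structural facts (i) and (ii): they are geometrically evident but a rigorous proof must track how edges are created and bisected along the refinement forest and must use $1$-irregularity of the final mesh $\mathcal{T}$ (the intermediate meshes need not be $1$-irregular). Once these are in place, the weight bookkeeping for the $d$-Neighbor loop and the detection checks for $\adm$ are routine.
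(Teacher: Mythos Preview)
Your proof is correct and follows the same strategy as the paper: both single out the quartets at the maximal refinement level and argue that they survive $\adm$, $\mar$, and $\clos$. Your treatment of the $\clos$ step is considerably more detailed than the paper's---where the paper simply asserts that a finest-level quartet ``cannot be blocked'' by either the $1$-irregularity check or the $d$-Neighbor Rule, you make this precise via the structural facts (i) and (ii) and an explicit case analysis showing that every non-central stencil node of a deepest quartet receives weight~$2$, so the weighted sum is $8>5$ (resp.\ $6>4$); this is a genuine strengthening of the paper's argument rather than a different route.
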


\begin{proof}
The set of admissible quartets $\mathcal{A}$ is first generated in $\adm$. During the procedure, this initial set $\mathcal{A}$ is only reduced but not enlarged. For this purpose, we first prove that the function $\adm$ outputs a non-empty set $\mathcal{A}$.

Due to the hierarchical structure of the mesh, there are always quartets of elements that have the same parent element. These elements correspond to the last level in a tree, cf.~Figure~\ref{fig:tree}. The only exception is the roots in the tree, but this case is excluded with the condition $\mathcal{T}\not= \mathcal{T}_0$. Thus, the output $\mathcal{A}$ of $\adm$ is non-empty. All elements of the triangulation are marked and therefore the step $\mar$ does not change the set $\mathcal{A}$. It remains to show that after the step $\clos$ there are still quartets in $\mathcal{A}$. 

To ensure the $1$-irregularity, quartets of elements are blocked, if they have an irregular edge. If an element in the quartet has an irregular edge, this means that there are elements at a finer level. If the quartet is at the finest level, it cannot be blocked by irregular edges. So there are always quartets that are not blocked because they are at the finest level. Thus, the set $\mathcal{A}$ remains non-empty.

 In addition, the $d$-Neighbor Rule blocks a quartet from being coarsened if neighboring elements on the same level are blocked. This blocking does not result from adaptive marking, because all elements are marked for coarsening. If such a blocking is done, there must therefore be elements on a finer level. Otherwise, if this quartet is at the finest level, it cannot be blocked. We conclude $\mathcal{A}\neq \emptyset$.
\end{proof}

\begin{bem}
The hierarchical structure of the mesh even indicates that the number of coarsening steps is given by the level of refinement, see also numerical experiments in Section~\ref{sect:experiments}.
\end{bem}

\subsection{Red-Green and Red-Blue Refinements}

We now discuss Algorithm~\ref{alg:algo2} for red-green and red-blue meshes. This algorithm calls the presented Algorithm \ref{alg:algo1}. To this end, we only focus on the other two steps $\rec$ and $\reg$. 

\subsubsection{$\rec$}

The $\rec$ operation can be easily explained using Figure~\ref{fig:regularize_green}. There, a mesh is depicted with different green patterns. By recoarsen we mean to delete the green patterns to obtain a $1$-irregular mesh with only red patterns.

To coarsen a regular mesh obtained by red-green or red-blue refinement back to an irregular mesh consisting of red patterns, it is necessary to determine the green and blue patterns. In our data structure we have stored red, green and blue patterns in blocks. This means that first all red elements are stored and then either a block of green or blue elements. To distinguish these blocks from each other, the number of green or blue elements is tracked in a global variable. This information helps to distinguish all relevant green or blue patterns to be deleted from the red ones. The next step is to determine the relationship between the elements. In the case of red-green for triangular meshes, there is only one green pattern that can exist in three different orientations, cf.~left pattern in Figure~\ref{fig:green}. A green pattern consists of two elements that are stored consecutively. To define their parent element, their common newest node is deleted and the remaining nodes are stored counterclockwise.

For quadrilateral meshes, the red-green strategy has three different green patterns that can be oriented in each direction, cf.~second left to right in Figure~\ref{fig:green}. One of the three patterns (right) consists of quadrilaterals and is thus stored in a block after all red quadrilaterals. The number of green quadrilaterals is tracked in a global variable. As in the triangular case, two consecutive elements have the same parent element. The definition of the parent element is easily made by deletion of the newest nodes and storage of the old ones in a mathematical positive sense. In addition, triangular patterns are used to regularize the mesh. These are stored in an additional variable and are thus already classified as green patterns. There are two types of green patterns consisting of triangles. The different types of green patterns are also stored in blocks. In this case, the number of elements belonging to a type is not stored and must be determined via characteristics. This is done using the node numbering within a green pattern. The number determined in this way tells us how many green patterns of which type exist. We also know how many elements per type have the same parent element. These are stored one after the other. With this information and the determination of the newest nodes, the parent element can be redefined.

For red-blue meshes, the number of blue patterns is tracked and can therefore be eliminated analogously. 

The $\rec$ step involves not only coarsening green and blue patterns to their corresponding parent elements, but also defining irregular edges resulting from this process and updating the marked elements $\mathcal{T}_\mathrm{mark}$. In the case of green patterns, deleting the newest node within a green pattern results in an irregular edge with this newest node as a hanging node. In the case of blue patterns, a hanging node is usually also introduced. However, there are cases where they do not become a hanging node. For example, if a blue pattern shares its newest node with another blue pattern, this newest node is completely eliminated from the mesh and does not become a hanging node. Further, the markings of green or blue elements in $\mathcal{T}_\mathrm{mark}$ are eliminated and the updated set $\mathcal{T}_\mathrm{mark}^R$ is output.

Overall, we have received a $1$-irregular mesh $\mathcal{T}^R$ out of the red-green or red-blue mesh $\mathcal{T}$ with an updated set of marked elements $\mathcal{T}_\mathrm{mark}^R$ that can now be processed via Algorithm~\ref{alg:algo1}.

\subsubsection{$\reg$}

The output of Algorithm~\ref{alg:algo1} is a coarse $1$-irregular mesh to which the $d$-Neighbor Rule applies with $d=3$ for quadrilaterals and $d=2$ for triangles. This ensures, that the specified green patterns have the shapes required to regularize the mesh. In other words, the patterns match the hanging nodes. Regularization is the elimination of hanging nodes by adding more elements to the mesh. This is the reverse operation of $\rec$. So we make sure to store the elements in blocks and track the number of green and blue patterns accordingly. This helps to coarsen the mesh again in a subsequent coarsening step to $\rec$. For the red-blue strategy a further note has to be made. Often, closing one hanging node by a blue pattern introduces another hanging node. This means that the blue patterns do not readily match the hanging nodes. To this end, the step $\reg$ for red-blue includes a further closure step that makes sure to remove all hanging nodes, compare the red-blue refinement strategy in \cite{funkenschmidt}.

\subsubsection{Properties of \textsc{CoarsenRg} and \textsc{CoarsenRb}}

For red-green and red-blue strategies, the shape regularity is fulfilled. Again, only elements that previously belonged together are merged. In addition, coarsening cannot result in hanging nodes, since $\rec$ creates a $1$-irregular grid in compliance with the $d$-Neighbor Rule that can then be coarsened by \textsc{CoarsenR}. From Theorem~\ref{th:1} it is known that \textsc{CoarsenR} outputs a $1$-irregular mesh in accordance with the $d$-Neighbor Rule. This ensures that no other green patterns then the predefined are needed. In $\reg$, all hanging nodes can be eliminated by adding these matching green patterns. For red-blue strategies, $\reg$ guarantees the regularity of the mesh by a further closure step, which is not specified here. In total, the following applies:

\begin{theorem}[Output \textsc{CoarsenRg}/\textsc{CoarsenRb}]\label{th:2}
Let $\mathcal{T}$ be a conforming triangulation obtained by red-green or red-blue refinement of an initial triangulation $\mathcal{T}_0$ and $\mathcal{T}_\mathrm{mark} \subset \mathcal{T}$. Then \textsc{CoarsenRg/} \textsc{CoarsenRb}$ (\mathcal{T},\mathcal{T}_\mathrm{mark})$ from Algorithm~\ref{alg:algo2} generates a conforming and shape regular triangulation.
\end{theorem}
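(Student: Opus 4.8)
The plan is to read off the three stages of Algorithm~\ref{alg:algo2} in order — $\rec$, then \textsc{CoarsenR} (Algorithm~\ref{alg:algo1}), then $\reg$ — and to propagate the two claimed properties, conformity and shape regularity, through each stage. Theorem~\ref{th:1} does the heavy lifting in the middle stage, so the work concentrates on the first and last stages and on checking that the hypotheses of Theorem~\ref{th:1} are met when it is invoked.

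First I would analyse $\rec$. Starting from the conforming mesh $\mathcal{T}$, the green (and, for red-blue, blue) blocks are identified from the global counters, and each green or blue patch is merged to its parent red element by deleting its newest node(s) and re-storing the surviving vertices counterclockwise. The deleted newest node of a green patch — and, generically, of an interior blue patch — becomes a hanging node, so $\mathcal{T}^R$ is a $1$-irregular mesh consisting of red patterns only, carrying an updated marked set $\mathcal{T}_\mathrm{mark}^R$. Shape regularity is not lost here: each element of $\mathcal{T}^R$ is either an unchanged red element of $\mathcal{T}$ or the (one level coarser) red parent of a patch, and red patterns of triangles and of convex quadrilaterals are shape regular by the discussion in Section~\ref{sect:refine}. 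I would also record that the $d$-Neighbor Rule holding for the red skeleton underlying the regular mesh $\mathcal{T}$ is inherited by $\mathcal{T}^R$, so that $(\mathcal{T}^R,\mathcal{T}_\mathrm{mark}^R)$ is a legitimate input for Algorithm~\ref{alg:algo1}.

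Applying Theorem~\ref{th:1} to this input then yields directly that $\textsc{CoarsenR}(\mathcal{T}^R,\mathcal{T}_\mathrm{mark}^R)$ is again $1$-irregular and shape regular, with the $d$-Neighbor Rule satisfied for $d=3$ on quadrilaterals and, since the $2$-Neighbor Rule is switched on in this pipeline, for $d=2$ on triangles; shape regularity persists because \textsc{CoarsenR} only merges quartets with a common parent, introducing no new similarity classes.

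Finally I would treat $\reg$, the reverse of $\rec$. The crucial claim is that $1$-irregularity together with the $d$-Neighbor Rule forces the hanging-node pattern around every red element of the coarsened mesh to be exactly one of the admissible green stencils of Figure~\ref{fig:green} (for red-blue, the blue pattern), so that the jigsaw assembly of Figure~\ref{fig:regularize_green} applies, removes all hanging nodes, and produces a conforming triangulation; the inserted green patterns create no small angles and no extra similarity classes, so shape regularity survives. For the red-blue case there is the additional subtlety that the single blue pattern need not fit — closing one hanging node may open another, and two opposite hanging nodes even force a red pattern — so $\reg$ contains a further \textsc{Closure} step whose termination and correctness I would cite from the red-blue refinement analysis in \cite{funkenschmidt}. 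I expect this matching-of-patterns point to be the main obstacle, since it is exactly where the careful bookkeeping of $1$-irregularity and of the $d$-Neighbor Rule through \textsc{CoarsenR} has to pay off; the shape-regularity accounting, by contrast, is routine once the three stages are isolated.
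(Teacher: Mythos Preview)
Your proposal is correct and mirrors the paper's own argument almost step for step: the paper, too, argues informally that $\rec$ produces a $1$-irregular red mesh obeying the $d$-Neighbor Rule, invokes Theorem~\ref{th:1} for \textsc{CoarsenR}, and then observes that the resulting $1$-irregularity plus $d$-Neighbor Rule guarantee the predefined green (resp.\ blue, with an extra closure) patterns suffice in $\reg$. If anything, your write-up is more explicit about shape regularity at each stage than the paper's terse ``only elements that previously belonged together are merged.''
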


With the same arguments, Theorem~\ref{th:coarsen} applies for a triangulation $\mathcal{T}$ obtained by red-green or red-blue refinement and successive applications of \textsc{CoarsenRG} and \textsc{CoarsenRB} as described in Algorithm~\ref{alg:algo2}. This is apparent as we operate on the $1$-irregular grid and the green respectively blue refinement is only an un-/closing operation.

\subsection{Newest Vertex Bisection and Red-Green-Blue Refinements}

Other works already discuss coarsening algorithms for the newest vertex bisection \cite{chenzhang,p1afem} and the red-green-blue refinement \cite{RGB}. At this point we only give a brief insight into the ideas.  We would like to draw particular attention to the difficulties encountered in these refinement strategies in terms of coarsening. Let us remember that we want to use the same algorithmic framework as for red refinements, namely Algorithm~\ref{alg:algo1}. A red refinement quarters an element. A green refinement or a bisection halves an element. This means that we can imagine the refinement again analogously with a tree -- here even a binary tree. So we have to find a criterion with which we can go back in the binary tree. This was shown in the work of Chen and Zhang \cite{chenzhang}. Now we have seen in Figure~\ref{fig:NVBRGB} that the red-green-blue refinement differs only by the last pattern. This is where a red refinement is applied instead of bisec(3). This makes it possible to have a different tree structure. Some elements are halved and others are quartered. This presents certain challenges, which are described in detail in the work of Funken and Schmidt \cite{RGB}. We introduce the ideas of the individual functions briefly, but refer to \cite{chenzhang,RGB} for a detailed description.

\subsubsection{$\adm$}
In contrast to the red refinement, we do not determine quartets of elements for coarsening. Instead, we find admissible-to-coarsening nodes via the determination of the cardinality of adjacent elements to the newest nodes.
The newest node of an element can be eliminated if there are two or four adjacent elements to this node. Otherwise, the elimination is blocked and can be resolved in a preceding step. 

A similar determination of admissible-to-coarsening nodes can be made in the red-green-blue refinement. Here, the red pattern plays a different role. To this end, we treat the middle element of a red pattern differently and count the adjacent elements to a newest node excluding these middle elements. The same criterion as for {\TNVB} can then be used to classify the newest nodes, i.e., for the quantity two or four adjacent elements excluding middle elements, the node can be eliminated. We refer to \cite{chenzhang, RGB} for a more thorough presentation.

\subsubsection{$\mar$}

The marking operation marks all nodes of a marked element.

\subsubsection{$\clos$}

The newest vertex bisection does not need a $\clos$ step as the admissible-to-coarsen nodes are already selected so that no further processing steps are necessary.

The red pattern in the red-green-blue refinement however plays a special role and thus an additional $\clos$ step is needed to ensure the shape regularity. This includes to take care of the assigned reference edges. The idea here is to go back as if the mesh was refined by the newest vertex bisection. This still guarantees the shape regularity but also ensures that coarsening is not blocked, see \cite{RGB}.

\subsubsection{$\upd$}
After determining the relevant nodes, the adjacent elements to these nodes are returned to their parent element by coarsening green or red patterns. The return of the green pattern is analogous to the return of a bisection. Blue patterns are eliminated via a two-step coarsening of green patterns but are not considered separately. Red patterns are returned to their parent element or a green or blue intermediate pattern, see \cite{RGB}.

We want to note that Theorem~\ref{th:2} holds analogously for {\TNVB} and {\TRGB}. A similar result as in Theorem~\ref{th:coarsen} can also be obtained, with restrictions to the assignment of reference edges in the initial triangulation $\mathcal{T}_0$. A more thorough discussion of these results can be found in \cite{chenzhang,RGB}.

\section{Implementation and Overview of Toolbox}\label{sect:implement}

In this section, we want to focus on how we can implement the ideas in \textsc{Matlab}. In particular, we have seen that coarsening red refined meshes plays a central role in our work. Coarsening green and blue patterns is an easy task that we will not go into further in this section. The implementation of {\TNVB} has already been discussed in detail in \cite{p1afem} and of {\TRGB} in \cite{RGB}. We therefore omit their presentation here and focus exclusively on the implementation of red coarsening in \textsc{Matlab} according to Algorithm~\ref{alg:algo1}.

\subsection{\textsc{MATLAB} Implementation: Coarsening of Red Refinements}

An implementation of the coarsening is based on the presented criteria and the utilization of data structures. We will first discuss the implementation of coarsening of adaptively refined quadrilateral meshes using the red refinement. Since the implementation for triangles differs only slightly, we present the differences in the step $\adm$ and otherwise refer to the codes provided in \cite{ameshcoars}. The reader will see that the ideas can be translated one-to-one with minor adjustments.

\subsubsection{QcoarsenR}

To get a better overview of the individual blocks, we will structure this section analogous to the individual steps $\adm$, $\mar$, $\clos$ and $\upd$ and an additional preparation step. 

\paragraph{$\adm$}

In this part, we determine admissible quartets of elements as described in Algorithm~\ref{alg:Qadm}.
\begin{itemize}
\item Lines 1--5: The function is usually called by \\
 \matlab{[coordinates,elements4,irregular,boundary] ...}\\
 \indent\matlab{ = QcoarsenR(N0,coordinates,elements4,irregular,boundary,marked)}. \\
 The variables \matlab{coordinates}, \matlab{elements4}, \matlab{irregular} and \matlab{boundary} are clear from the definition in Section~\ref{sect:datastructure}. Note, that the input \matlab{elements4} is already sorted as required in Algorithm~\ref{alg:Qadm}, i.e., the smallest index is at position one. \matlab{N0} is the number of nodes in the initial triangulation $\mathcal{T}_0$. Boundary data is an optional argument. \matlab{marked} determines the elements for coarsening and is considered to be the last entry of \matlab{varargin} (Line 5). The number of nodes \matlab{nC} and element \matlab{nE} is determined.
 \item Lines 6--13: Here, admissible middle nodes are determined. Due to the sorting within an element, the possible middle nodes are the ones at position three. To this end, we sort the elements such that all elements with the same third node are listed after each other. If there are four in a row, they are considered admissible middle nodes, see Figure~\ref{fig:middlenode}.
 \item Lines 14--18: Based on the admissible middle nodes, we determine the four elements adjacent to this node. To this end, we define a variable with the indices of the coordinates and change the sign for admissible middle nodes. Applied to the elements, we can determine for which elements such a negative index is present. Then, \matlab{node2elem} represents the admissible quartets of elements. 
\end{itemize}

\lstinputlisting[xleftmargin=0.8cm,language=matlab,frame=tb,linerange ={1-18},label=lst:QcoarsenR1,caption= {The step $\adm$ in QcoarsenR.m}]{tikz/QcoarsenR.m}

\paragraph{$\mar$}

In this part, we reduce the admissible quartets of elements as described in Algorithm~\ref{alg:mark}.
\begin{itemize}
\item Lines 19--23: We mark elements for coarsening. If at least one element in a quartet is marked, it is considered admissible. The condition in Line 21 can be changed to \matlab{==4} if it is preferable to require that all elements of an quartet shall be marked that this quartet is considered for coarsening.
\end{itemize}

\lstinputlisting[xleftmargin=0.8cm,language=matlab,frame=tb,linerange ={19-23},firstnumber = 19,label=lst:QcoarsenR2,caption= {The step $\mar$ in QcoarsenR.m}]{tikz/QcoarsenR.m}

\paragraph{Preparation Step}

Until now, it was not necessary to use neighboring properties other than those already stored in the data structures. However, in order to implement the $\clos$ and the $\upd$ step, further geometric information is required, which is now generated. In addition, the existing data is being prepared for further use.

\begin{itemize}
\item Lines 24--32: The elements in a quartet are not sorted in \matlab{node2elem}. To ensure storing the coordinates of the parent element in counterclockwise order, we need to ensure that they are stored counterclockwise. To this end, a sorting is made until the four elements are sorted counterclockwise.
 \item Lines 33--35: With the help of the auxiliary function \matlab{provideGeometricData}, we generate an edge numbering. 
 \item Lines 36--39: For the $1$-irregularity, it is of importance if any of the elements in an admissible quartet has an irregular edge. To this end, in \matlab{adm2edges} the outer edges of an admissible quartet is stored.
 \item 40--41: We mark an edge if it has a hanging node.
\end{itemize}
\lstinputlisting[xleftmargin=0.8cm,language=matlab,frame=tb,linerange ={24-41},firstnumber = 24,label=lst:QcoarsenR3,caption= {Some preparation for subsequent steps in QcoarsenR.m}]{tikz/QcoarsenR.m}

\paragraph{$\clos$}

In this part, we reduce the admissible quartets of elements as described in Algorithm~\ref{alg:Qclos}.

\begin{itemize}
\item Lines 43: We first determine boundary edges since this information is not always directly available. \matlab{boundary} is an optional argument. The key here is that irregular and inner edges occur twice, i.e., single occuring edges must lie on the boundary. This is important for implementing the assignment of node values specified in the criterion for the 3-Neighbor Rule.
\item Lines 45--49: If any of the elements in an admissible quartet has an irregular edge it is eliminated from the set of admissible middle nodes \matlab{adm} and admissible quartets \matlab{node2elem}, see Figure~\ref{fig:irregular}.
\item Lines 47--59: The values for the nodes in the five-point stencil are assigned. Nodes from boundary or irregular edges are counted twice. In Lines 53--54, it is determined whether this node is shared by two stencils of admissible quartets or not.  The value assignment is accordingly. See Figure~\ref{fig:numbering} for reference. Line 57 determines whether a quartet needs to be eliminated due to the 3-Neighbor Rule. The whole process is repeated until no further changes occur.
\end{itemize}

\lstinputlisting[xleftmargin=0.8cm,language=matlab,frame=tb,linerange ={42-59},firstnumber = 42,label=lst:QcoarsenR4,caption= {The step $\clos$ in QcoarsenR.m}]{tikz/QcoarsenR.m}

\paragraph{$\upd$}

In this part, we update the mesh as described in Algorithm~\ref{alg:upd}.
\begin{itemize}
\item Lines 60--62: Elements that are not in any of the admissible quartets are not changed. The parent elements of the quartets are appended to the array \matlab{elements}. The parent element is defined by the four oldest nodes in the quartet. These are in the first position in the respective child elements. The previous sorting guarantees that the nodes are mathematically positively defined within this element.
\item Lines 63--77: Irregularity data is updated, too. To this end, we store all irregular edges and additionally all edges that we eliminated. If any edges appear twice, they are deleted (Lines 70--71). If any of the remaining edges is a boundary edge, they are also deleted (Lines 72-77). What remains are the updated irregular edges. 
\item Lines 78--84: Due to the element updates, some nodes are no longer in the mesh. For this purpose the corresponding nodes are deleted in \matlab{coordinates}. The indexing to the coordinates in the arrays \matlab{elements} and \matlab{irregular} is updated accordingly.
\item 85--99: If additionally boundary data is provided, these edges are also updated.
\item Lines 100--106: Finally, we ensure that all elements are sorted such that the minimal index is at position one.
\end{itemize}

\lstinputlisting[xleftmargin=0.8cm,language=matlab,frame=tb,linerange ={60-107},firstnumber = 60,label=lst:QcoarsenR5,caption= {The step $\upd$ in QcoarsenR.m}]{tikz/QcoarsenR.m}

\subsubsection{TcoarsenR}

For triangular meshes, the most ideas can be adopted. What distinguishes the triangular case from the quadrilateral is mainly the step $\adm$. All other steps are done with minor changes and are thus not addressed separately. To get to the $\adm$ step, some preparations are necessary which we discuss first.

\paragraph{Preparation Step}
In this part, we prepare the input arrays for our algorithm and generate further geometric information on the mesh.
\begin{itemize}
\item Lines 1--5: This is analogous to the quadrilateral case.
\item Lines 6--12: In contrast to our quadrilateral mesh, the triangular mesh is not stored in a way that the minimal index is stored at position one. To this end a preliminary sorting is performed.
\item Lines 13--18: With the auxiliary functions \matlab{provideGeometricData} and \matlab{createEdge2Elements} we generate edge information and their correspondence to elements.
\end{itemize}

\lstinputlisting[xleftmargin=0.8cm,language=matlab,frame=tb,linerange ={1-18},firstnumber = 1,label=lst:TcoarsenR1,caption= {Preparation step in TcoarsenR.m}]{tikz/TcoarsenR.m}

\paragraph{$\adm$}

In this part, we determine admissible quartets of elements as described in Algorithm~\ref{alg:Tadm}, cf.~Figure~\ref{fig:middleelement}.

\begin{itemize}
\item Line 20: The oldest node in an element is the node with the minimal index. Due to the pre-sorting this node is stored at position one in \matlab{elements}.
\item Lines 21--22: We assign, as described in Algorithm~\ref{alg:Tadm} Line 6, the value $v_\mathrm{old,e}$ to each edge $e \in \mathcal{E}(\mathcal{T})$.
\item Lines 23--24: We check the criterion that all values assigned to the edges of an element must be equal. If this is not the case, we consider it to be a middle element.
\item Lines 25--26: We exclude the middle elements that contain a node from the initial triangulation $\mathcal{T}_0$ ensuring that coarsening does not occur beyond the initial triangulation.
\item Lines 27--28: For each edge of the middle elements the adjacent elements are determined and stored in \matlab{elem2coarse}
\item Lines 29--30: If any of the adjacent elements is $0$, i.e., there does not exist a direct neighbor, this middle element is not considered for an admissible quartet.
\item Lines 31--33: So far, \matlab{elem2coarse} has six entries, with the middle element listed three times. However, we would like each element to occur only once and we would like to have a quartet in the end. For this purpose we determine the entries that are not equal to the middle element. These are then stored in an updated \matlab{elem2coarse} with the middle elements \matlab{midElements}. This completes the determination of the admissible quartets of elements.
\item Lines 34--38: As in the quadrilateral case, we want to have the elements in such an order that we store the coordinates of the parent element counterclockwise. For this purpose, the outer three elements are stored counterclockwise and the middle element at the end.
\end{itemize}

\lstinputlisting[xleftmargin=0.8cm,language=matlab,frame=tb,linerange ={19-38},firstnumber = 19,label=lst:TcoarsenR2,caption= {The step $\adm$ in TcoarsenR.m}]{tikz/TcoarsenR.m}

The steps $\mar$, $\clos$ and $\upd$ are equivalent to the ones presented for quadrilaterals with some minor changes. The full code is provided in \cite{ameshcoars}.

\subsection{Overview of Functions Provided by \texttt{ameshcoars}-Package}\label{sect:overview}

Our \texttt{ameshcoars} toolbox consists of several modules and is available on \cite{ameshcoars}. We list the functions and state the function call for each function. For the different input parameters, we use the abbreviations \matlab{C(coordinates)}, \matlab{E3(elements3)}, \matlab{E4(elements4)}, \matlab{I(irregular)}, and \matlab{b(boundary)}. Boundary data is an optional argument that can be left out. \matlab{N0} is the number of coordinates in the initial mesh $\mathcal{T}_0$, i.e., the initial mesh before any refinement. With this information, we make sure to not coarsen further than to the initial triangulation.

In this toolbox, we stick to the naming from the \texttt{ameshref}-package. For the coarsening routines, we simply replace the \emph{refine} by a \emph{coarsen}. The rest of the naming remains the same. As our toolbox is based on \texttt{ameshref} there is the folder \texttt{refinement/} that includes all the refinement and auxiliary functions used in the refinement. What is interesting for us are the functions in the folder \texttt{coarsening/} and their call.

\subsubsection{Local Mesh Coarsening}

The functions for adaptive mesh coarsening are called by

\begin{itemize}
\item \matlab{[C,E3,B] = TcoarsenNVB(N0,C,E3,B,marked)}
\item \matlab{[C,E3,B] = TcoarsenRGB(N0,C,E3,B,marked)}
\item \matlab{[C,E3,B] = TcoarsenRG(N0,C,E3,B,marked)}
\item \matlab{[C,E3,I,B] = TcoarsenR(N0,C,E3,I,B,marked)}
\item \matlab{[C,E4,I,B] = QcoarsenR(N0,C,E4,I,B,marked)}
\item \matlab{[C,E4,B] = QcoarsenRB(N0,C,E4,B,marked)}
\item \matlab{[C,E3,E4,B] = QcoarsenRG(N0,C,E3,E4,B,marked)}
\end{itemize}
We want to remark that the functions \matlab{TcoarsenRG}, \matlab{QcoarsenRB} and \matlab{QcoarsenRG} are called in a three-step-procedure calling the functions \matlab{TcoarsenR_2neighbor}, which is a function operating on a $1$-irregular triangular grid as \matlab{TcoarsenR} with the addition to stick to the 2-Neighbor Rule, and \matlab{QcoarsenR}.  Additional pre- and post-processing is done 
in
\begin{itemize}
\item \matlab{coarse_greenelements} and \matlab{regularize_Tgreen} for TcoarsenRG, \item \matlab{coarse_blueelements} and \matlab{regularize_Qblue} for QcoarsenRB and  \item\matlab{coarse_Qgreenelements} and \matlab{regularizeedges_tri} for QcoarsenRG. 
\end{itemize}

Additional auxiliary functions are needed to provide more information on the geometric data, see an explanation in Section~\ref{sect:datastructure}. These include the functions 

\begin{itemize}
\item \matlab{[edge2nodes,element3edges,element4edges,boundary2edges] =} \\
\indent  \matlab{provideGeometricData(E3,E4,B)} and 
\item \matlab{edge2elements = createEdge2Elements(element2edges)}.
\end{itemize} 
For the latter there is also an extended version \matlab{createEdge2Elements_adv}, which additionally provides the information about the numbering of the edges within this element. Thus, \matlab{createEdge2Elements} outputs the elements that share an edge, and \matlab{createEdge2Elements_adv} additionally outputs the information about which edge within the element it is.

\subsubsection{A Minimal Example}\label{sect:minimalexample}

Listing~\ref{lst:minimalexample} shows an exemplary code of how to embed the coarsening routine into a framework. We start with defining an initial mesh $\mathcal{T}_0$ (Lines 1--5). The number of coordinates in the initial mesh is stored in \matlab{N0}. A refined mesh $\tilde{\mathcal{T}}$ is created via \texttt{TrefineR} (Lines 8--16). For a given triangulation $\mathcal{T}$ and a given discrete point set $\mathcal{P}$, the function \texttt{point2element} determines the elements of $\mathcal{T}$ that include $p$ for some $p \in \mathcal{P}$. Thus, for the defined discrete point set in Lines 17--23, elements in $\tilde{\mathcal{T}}$ are marked according to \texttt{point2element}. We coarsen the mesh via the function call \texttt{TcoarsenR} (Line 27--28) until no further change is made (Line 29). Lines 34--36 plot the locally coarsened mesh. 

\lstinputlisting[xleftmargin=0.8cm,language=matlab,frame=tb,label=lst:minimalexample,caption= {A minimal example}]{minimal_example.m}

This minimal example can be called in an adapted way for all implemented coarsening routines. This includes the definition of the mesh and the calls of the refinement and coarsening strategy. Especially, for red-green or red-blue strategies it is important to first \matlab{clear all} variables, as in these functions global variables \matlab{nG} or \matlab{nB} are defined. If they are not cleared before a new script is started, the global variable may already have a predefined value and an error may be displayed when the script is executed.

\subsubsection{Examples and Demo Files}
Numerical examples and demo files based on the interplay of refinement and coarsening are provided in subdirectories of the \texttt{ameshcoars}--toolbox:
\begin{itemize}
\item \texttt{example1/}: refinement along a moving circle,
\item \texttt{example2/}: adaptive finite element implementation following \cite{p1afem} for a quasi-stationary partial differential equation,
\item \texttt{example3/}: triangulation of a GIF, 
\item \texttt{example4/}: local coarsening of a uniformly refined triangulation.
\end{itemize}

\section{Numerical Experiments}\label{sect:experiments}
To demonstrate the efficiency of the developed \textsc{Matlab} code, we provide some numerical experiments performed on an Apple MacBook Air with a 1.6 GHz Intel Core i5, a RAM of 8 GB 1600MHz DDR3 on Mac OS High Sierra, version 10.13.6. Throughout, Matlab version 9.2.0 (R2017a) is used.

For the first experiment, we consider a refinement along a circle, which is implemented as a part in \texttt{example1/}, and coarsen this refinement by marking all elements for coarsening until the initial triangulation is restored. We measure with \textsc{Matlab}'s \matlab{tic/toc} 20 times the computational time needed to coarsen the refined circle until the initial triangulation is restored, and take the average of the measured times. The results are shown in Figure~\ref{fig:scale}. There the number of nodes and the computational time in seconds show a nearly linear behavior. We would like to emphasize that the time displayed for a number of nodes is the computational time required to perform a coarsening step with all elements marked. To determine the total computational time required to coarsen a mesh with a given number of nodes back to the initial triangulation, the times must be cumulated. One can see that the computational times for a coarsening step scale linearly with the number of nodes in that mesh.

% This file was created by matlab2tikz.
%
%The latest updates can be retrieved from
%  http://www.mathworks.com/matlabcentral/fileexchange/22022-matlab2tikz-matlab2tikz
%where you can also make suggestions and rate matlab2tikz.
%
\definecolor{mycolor1}{rgb}{0.00000,0.44700,0.74100}%
\definecolor{mycolor2}{rgb}{0.85000,0.32500,0.09800}%
\definecolor{mycolor3}{rgb}{0.92900,0.69400,0.12500}%
\definecolor{mycolor4}{rgb}{0.49400,0.18400,0.55600}%
\definecolor{mycolor5}{rgb}{0.46600,0.67400,0.18800}%
\definecolor{mycolor6}{rgb}{0.30100,0.74500,0.93300}%
\definecolor{mycolor7}{rgb}{0.63500,0.07800,0.18400}%
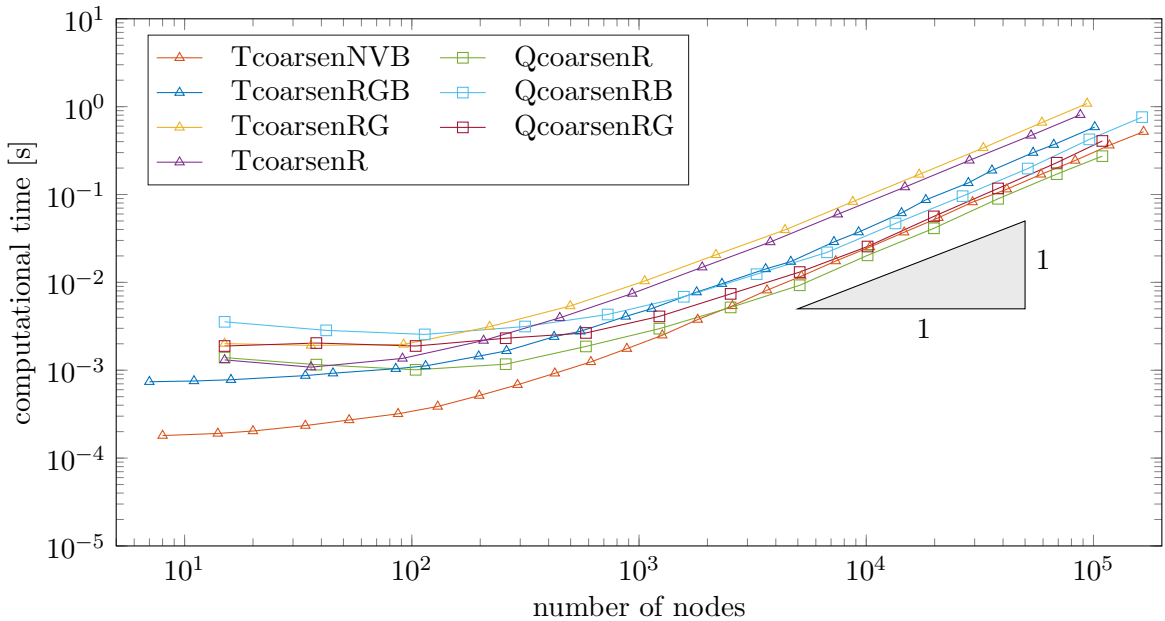
\begin{figure}
\begin{tikzpicture}

\begin{axis}[%
width=0.8\textwidth,
height=0.27\textheight,
at={(3.467in,1.367in)},
scale only axis,
xmode=log,
xmin=5,
xmax=200000,
xminorticks=true,
xlabel={number of nodes},
ymode=log,
ymin=1e-05,
ymax=10,
yminorticks=true,
ylabel={computational time [s]},
axis background/.style={fill=white},
legend columns=2,
legend style={column sep=0.3cm,legend pos = north west, legend cell align=left, align=left, draw=white!15!black}
]
\addplot [color=mycolor2, mark=triangle, mark options={solid, mycolor2}]
  table[row sep=crcr]{%
%835135	2.9148303216\\
%640556	2.13493937855\\
%470138	1.5734318517\\
%332364	1.0846874101\\
%235076	0.7593663075\\
166190	0.5195997892\\
117548	0.36315741705\\
83104	0.2438660294\\
58780	0.1695051417\\
41554	0.114864975\\
29390	0.08203458315\\
20772	0.05435086115\\
14690	0.0371924118\\
10380	0.0252625229\\
7338	0.01748132315\\
5182	0.01176272185\\
3658	0.00814031715\\
2576	0.0053987895\\
1812	0.00377355695\\
1266	0.0025010359\\
884	0.00175936695\\
614	0.0012445871\\
426	0.0009229767\\
292	0.00068318685\\
198	0.0005145303\\
130	0.0003868203\\
87	0.00031890985\\
53	0.0002714325\\
34	0.0002344383\\
20	0.00020336695\\
14	0.00019078055\\
8	0.00018039365\\
};
\addlegendentry{TcoarsenNVB}

\addplot [color=mycolor5, mark=square, mark options={solid, mycolor5}]
  table[row sep=crcr]{%
109412	0.2734670956\\
68923	0.1705318288\\
38031	0.08938165115\\
19851	0.0412500901\\
10136	0.0203037156\\
5095	0.00926711665\\
2530	0.00518779745\\
1229	0.00297052115\\
584	0.0018655824\\
259	0.0011718106\\
104	0.0010114091\\
38	0.00115820785\\
15	0.00139229135\\
0	0\\
0	0\\
0	0\\
0	0\\
0	0\\
0	0\\
0	0\\
0	0\\
0	0\\
0	0\\
0	0\\
0	0\\
0	0\\
0	0\\
0	0\\
0	0\\
0	0\\
0	0\\
0	0\\
0	0\\
0	0\\
0	0\\
0	0\\
0	0\\
0	0\\
0	0\\
0	0\\
0	0\\
0	0\\
0	0\\
0	0\\
0	0\\
0	0\\
0	0\\
0	0\\
0	0\\
0	0\\
0	0\\
0	0\\
0	0\\
0	0\\
0	0\\
0	0\\
0	0\\
0	0\\
0	0\\
0	0\\
0	0\\
0	0\\
0	0\\
0	0\\
0	0\\
0	0\\
0	0\\
0	0\\
0	0\\
0	0\\
0	0\\
0	0\\
0	0\\
0	0\\
0	0\\
0	0\\
0	0\\
0	0\\
0	0\\
0	0\\
0	0\\
0	0\\
0	0\\
0	0\\
0	0\\
0	0\\
0	0\\
0	0\\
0	0\\
0	0\\
0	0\\
0	0\\
0	0\\
0	0\\
0	0\\
0	0\\
0	0\\
0	0\\
0	0\\
0	0\\
};
\addlegendentry{QcoarsenR}

\addplot [color=mycolor1, mark=triangle, mark options={solid, mycolor1}]
  table[row sep=crcr]{%
101330	0.58690345875\\
66883	0.3695228924\\
54243	0.2997602269\\
35756	0.18838079315\\
28141	0.13568310595\\
18289	0.08694636485\\
14297	0.0616365271\\
9256	0.03733339775\\
7207	0.02889003465\\
4655	0.0172263497\\
3608	0.0142030244\\
2317	0.009652451\\
1793	0.0077470757\\
1136	0.0050251457\\
874	0.0040918044\\
552	0.0027750994\\
423	0.0024071503\\
261	0.00166195165\\
197	0.00144440645\\
115	0.00111956135\\
85	0.0010406393\\
45	0.00092496635\\
34	0.0008684498\\
16	0.0007794369\\
11	0.0007544094\\
7	0.00073952635\\
0	0\\
0	0\\
0	0\\
0	0\\
0	0\\
0	0\\
0	0\\
0	0\\
0	0\\
0	0\\
0	0\\
0	0\\
0	0\\
0	0\\
0	0\\
0	0\\
0	0\\
0	0\\
0	0\\
0	0\\
0	0\\
0	0\\
0	0\\
0	0\\
0	0\\
0	0\\
0	0\\
0	0\\
0	0\\
0	0\\
0	0\\
0	0\\
0	0\\
0	0\\
0	0\\
0	0\\
0	0\\
0	0\\
0	0\\
0	0\\
0	0\\
0	0\\
0	0\\
0	0\\
0	0\\
0	0\\
0	0\\
0	0\\
0	0\\
0	0\\
0	0\\
0	0\\
0	0\\
0	0\\
0	0\\
0	0\\
0	0\\
0	0\\
0	0\\
0	0\\
0	0\\
0	0\\
0	0\\
0	0\\
0	0\\
0	0\\
0	0\\
0	0\\
0	0\\
0	0\\
0	0\\
0	0\\
0	0\\
0	0\\
};
\addlegendentry{TcoarsenRGB}

\addplot [color=mycolor6, mark=square, mark options={solid, mycolor6}]
  table[row sep=crcr]{%
163758	0.7599595943\\
95928	0.42433579375\\
51406	0.19734273115\\
26544	0.09579400305\\
13448	0.04685640555\\
6724	0.02196455\\
3292	0.01239303005\\
1574	0.00685701505\\
728	0.0043086685\\
315	0.00314602265\\
114	0.00255598505\\
42	0.0028430379\\
15	0.0035570589\\
0	0\\
0	0\\
0	0\\
0	0\\
0	0\\
0	0\\
0	0\\
0	0\\
0	0\\
0	0\\
0	0\\
0	0\\
0	0\\
0	0\\
0	0\\
0	0\\
0	0\\
0	0\\
0	0\\
0	0\\
0	0\\
0	0\\
0	0\\
0	0\\
0	0\\
0	0\\
0	0\\
0	0\\
0	0\\
0	0\\
0	0\\
0	0\\
0	0\\
0	0\\
0	0\\
0	0\\
0	0\\
0	0\\
0	0\\
0	0\\
0	0\\
0	0\\
0	0\\
0	0\\
0	0\\
0	0\\
0	0\\
0	0\\
0	0\\
0	0\\
0	0\\
0	0\\
0	0\\
0	0\\
0	0\\
0	0\\
0	0\\
0	0\\
0	0\\
0	0\\
0	0\\
0	0\\
0	0\\
0	0\\
0	0\\
0	0\\
0	0\\
0	0\\
0	0\\
0	0\\
0	0\\
0	0\\
0	0\\
0	0\\
0	0\\
0	0\\
0	0\\
0	0\\
0	0\\
0	0\\
0	0\\
0	0\\
0	0\\
0	0\\
0	0\\
0	0\\
0	0\\
};
\addlegendentry{QcoarsenRB}

\addplot [color=mycolor3, mark=triangle, mark options={solid, mycolor3}]
  table[row sep=crcr]{%
93755	1.0856479235\\
59308	0.65952968515\\
32756	0.33928194525\\
17097	0.16920570725\\
8729	0.0828804507\\
4389	0.039562535\\
2181	0.0205995019\\
1059	0.01036070185\\
498	0.00538420755\\
220	0.0031336146\\
92	0.00196487975\\
36	0.00191436405\\
15	0.00201055855\\
0	0\\
0	0\\
0	0\\
0	0\\
0	0\\
0	0\\
0	0\\
0	0\\
0	0\\
0	0\\
0	0\\
0	0\\
0	0\\
0	0\\
0	0\\
0	0\\
0	0\\
0	0\\
0	0\\
0	0\\
0	0\\
0	0\\
0	0\\
0	0\\
0	0\\
0	0\\
0	0\\
0	0\\
0	0\\
0	0\\
0	0\\
0	0\\
0	0\\
0	0\\
0	0\\
0	0\\
0	0\\
0	0\\
0	0\\
0	0\\
0	0\\
0	0\\
0	0\\
0	0\\
0	0\\
0	0\\
0	0\\
0	0\\
0	0\\
0	0\\
0	0\\
0	0\\
0	0\\
0	0\\
0	0\\
0	0\\
0	0\\
0	0\\
0	0\\
0	0\\
0	0\\
0	0\\
0	0\\
0	0\\
0	0\\
0	0\\
0	0\\
0	0\\
0	0\\
0	0\\
0	0\\
0	0\\
0	0\\
0	0\\
0	0\\
0	0\\
0	0\\
0	0\\
0	0\\
0	0\\
0	0\\
0	0\\
0	0\\
0	0\\
0	0\\
0	0\\
0	0\\
};
\addlegendentry{TcoarsenRG}

\addplot [color=mycolor7, mark=square, mark options={solid, mycolor7}]
  table[row sep=crcr]{%
109412	0.4069204412\\
68923	0.22996601805\\
38031	0.1172187833\\
19851	0.05637060355\\
10136	0.0255958446\\
5095	0.013106394\\
2530	0.0073964186\\
1229	0.00409977215\\
584	0.0026565655\\
259	0.0023128485\\
104	0.001887278\\
38	0.00203877595\\
15	0.00188622625\\
0	0\\
0	0\\
0	0\\
0	0\\
0	0\\
0	0\\
0	0\\
0	0\\
0	0\\
0	0\\
0	0\\
0	0\\
0	0\\
0	0\\
0	0\\
0	0\\
0	0\\
0	0\\
0	0\\
0	0\\
0	0\\
0	0\\
0	0\\
0	0\\
0	0\\
0	0\\
0	0\\
0	0\\
0	0\\
0	0\\
0	0\\
0	0\\
0	0\\
0	0\\
0	0\\
0	0\\
0	0\\
0	0\\
0	0\\
0	0\\
0	0\\
0	0\\
0	0\\
0	0\\
0	0\\
0	0\\
0	0\\
0	0\\
0	0\\
0	0\\
0	0\\
0	0\\
0	0\\
0	0\\
0	0\\
0	0\\
0	0\\
0	0\\
0	0\\
0	0\\
0	0\\
0	0\\
0	0\\
0	0\\
0	0\\
0	0\\
0	0\\
0	0\\
0	0\\
0	0\\
0	0\\
0	0\\
0	0\\
0	0\\
0	0\\
0	0\\
0	0\\
0	0\\
0	0\\
0	0\\
0	0\\
0	0\\
0	0\\
0	0\\
0	0\\
0	0\\
0	0\\
};
\addlegendentry{QcoarsenRG}

\addplot [color=mycolor4, mark=triangle, mark options={solid, mycolor4}]
  table[row sep=crcr]{%
87548	0.80796091445\\
53101	0.4704932151\\
28462	0.2447355005\\
14792	0.1214922627\\
7498	0.05958132755\\
3782	0.02880449005\\
1897	0.01487452085\\
933	0.0074716319\\
448	0.00395244555\\
207	0.0021783632\\
91	0.0013623355\\
36	0.00108307005\\
15	0.00131079795\\
0	0\\
0	0\\
0	0\\
0	0\\
0	0\\
0	0\\
0	0\\
0	0\\
0	0\\
0	0\\
0	0\\
0	0\\
0	0\\
0	0\\
0	0\\
0	0\\
0	0\\
0	0\\
0	0\\
0	0\\
0	0\\
0	0\\
0	0\\
0	0\\
0	0\\
0	0\\
0	0\\
0	0\\
0	0\\
0	0\\
0	0\\
0	0\\
0	0\\
0	0\\
0	0\\
0	0\\
0	0\\
0	0\\
0	0\\
0	0\\
0	0\\
0	0\\
0	0\\
0	0\\
0	0\\
0	0\\
0	0\\
0	0\\
0	0\\
0	0\\
0	0\\
0	0\\
0	0\\
0	0\\
0	0\\
0	0\\
0	0\\
0	0\\
0	0\\
0	0\\
0	0\\
0	0\\
0	0\\
0	0\\
0	0\\
0	0\\
0	0\\
0	0\\
0	0\\
0	0\\
0	0\\
0	0\\
0	0\\
0	0\\
0	0\\
0	0\\
0	0\\
0	0\\
0	0\\
0	0\\
0	0\\
0	0\\
0	0\\
0	0\\
0	0\\
0	0\\
0	0\\
};
\addlegendentry{TcoarsenR}

\addplot[area legend, draw=black, fill=white!80!black, fill opacity=0.4]
table[row sep=crcr] {%
x	y\\
5000	 0.005\\
50000	0.005\\
50000	0.05\\
5000	 0.005\\
}--cycle;

\node[below, align=center]
at (axis cs:60000,0.03) {1};
\node[right, align=left]
at (axis cs:15000,0.003) {1};

\end{axis}
\end{tikzpicture}%
\caption{Computational times in seconds vs.~number of nodes. We observe a nearly linear behavior between the number of nodes and the computational time. The newest vertex bisection implemented in \cite{p1afem} is a bit more performant because no further $\clos$ step is necessary.}
\label{fig:scale}
\end{figure}

What is noticeable in Figure~\ref{fig:scale} is that {\TRGB} and {\TNVB} have many more data points than the other strategies. To this end, we have studied the interaction between refinement and coarsening. Specifically, we wanted to know how many refinement steps are required to preserve the adaptive mesh, and on the contrary, how many coarsening steps are required to restore the original mesh with all elements marked for coarsening. We have already indicated that in the case of the red, red-green and red-blue strategies the number of coarsening steps is identical to the number of refinement steps. This is due to the hierarchical structure of the meshes. In a coarsening step, less coarsening is generally possible than was refined in the last refinement step. This is because in coarsening 1-irregularity and the $d$-Neighbor Rule are ensured by blocking the coarsening. With the refinement, on the other hand, this is achieved by further refinements. In general, the number of coarsening steps is not less than the number of refinement steps needed to restore the initial mesh. To see this, Figure~\ref{fig:refvscoars} shows the refinement and coarsening steps as a function of the number of elements. The refinement curve shows the number of elements achieved in each refinement step. For a better comparison, we have reversed the coarsening steps and the corresponding number of elements. Normally for coarsening we would start with a mesh with a large number of elements and decrease with each step. Here we have simply reversed the data points beginning with the smallest to the highest number of elements to get a better comparison of refinement and coarsening.

% This file was created by matlab2tikz.
%
%The latest updates can be retrieved from
%  http://www.mathworks.com/matlabcentral/fileexchange/22022-matlab2tikz-matlab2tikz
%where you can also make suggestions and rate matlab2tikz.
%
\definecolor{mycolor1}{rgb}{0.00000,0.44700,0.74100}%
\definecolor{mycolor2}{rgb}{0.85000,0.32500,0.09800}%
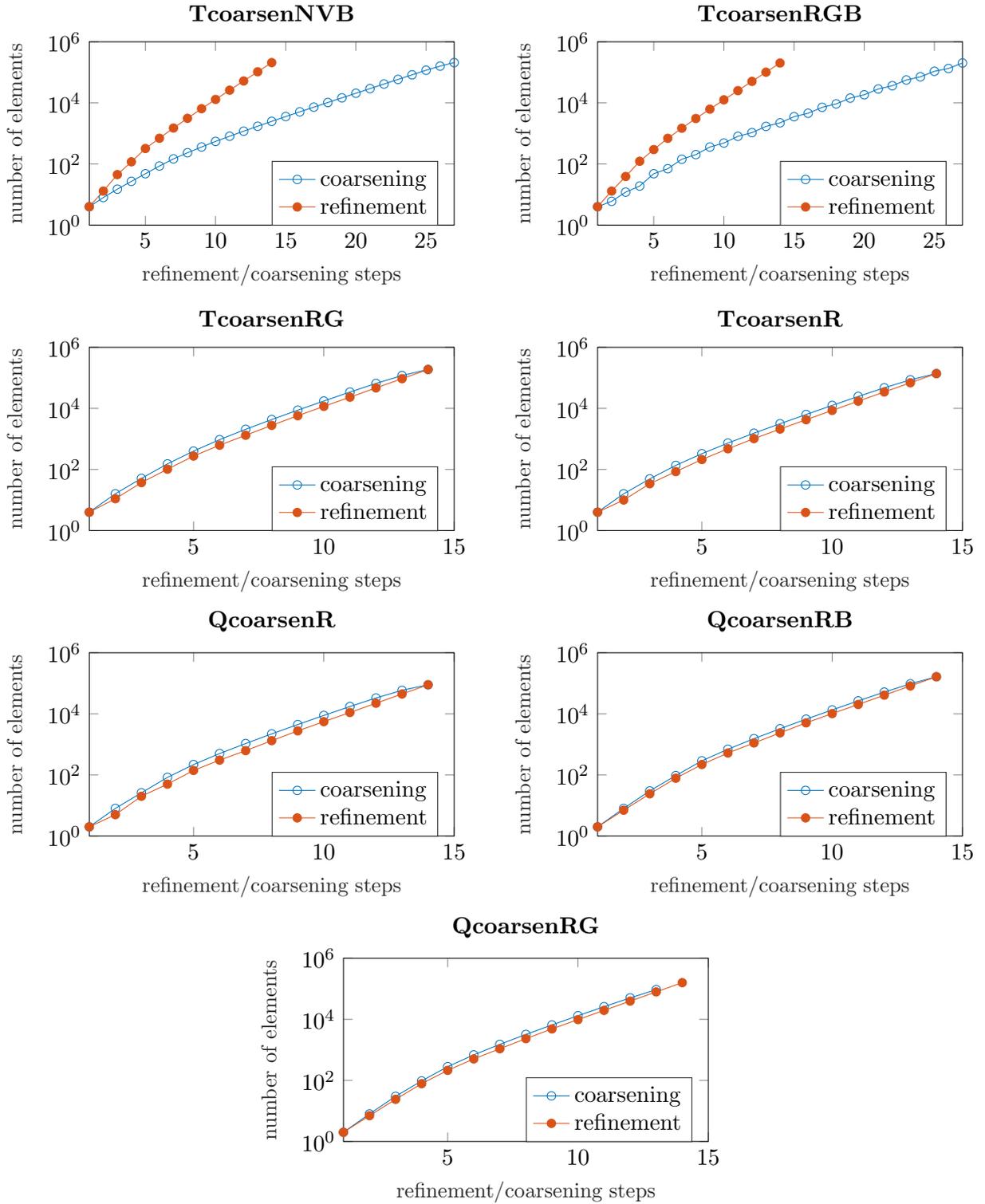
\begin{figure}
\begin{tikzpicture}

\begin{axis}[%
width=0.35\textwidth,
height=1.2in,
at={(0.7in,9.068in)},
scale only axis,
%xmode=log,
xmin=1,
xmax=27,
%xminorticks=true,
xlabel style={font=\color{white!15!black}},
xlabel={\small refinement/coarsening steps},
ymode=log,
ymin=1,
ymax=1000000,
yminorticks=false,
ylabel style={font=\color{white!15!black}},
ylabel={\small number of elements},
axis background/.style={fill=white},
title style={font=\bfseries},
title={TcoarsenNVB},
legend style={at={(0.5,0.35)}, anchor=north west, legend cell align=left, align=left, draw=white!15!black}
]

\addplot [color=mycolor1, mark=o, mark options={solid, mycolor1}]
  table[row sep=crcr]{%
1	4\\
2	8\\
3	15\\
4	27\\
5	48\\
6	86\\
7	147\\
8	233\\
9	360\\
10	548\\
11	810\\
12	1186\\
13	1720\\
14	2484\\
15	3570\\
16	5098\\
17	7258\\
18	10306\\
19	14614\\
20	20698\\
21	29312\\
22	41476\\
23	58706\\
24	83034\\
25	117482\\
26	160094\\
27	208746\\
};
\addlegendentry{coarsening}

\addplot [color=mycolor2, mark=*, mark options={solid, mycolor2}]
  table[row sep=crcr]{%
1	4\\
2	13\\
3	45\\
4	118\\
5	321\\
6	696\\
7	1502\\
8	3132\\
9	6462\\
10	12986\\
11	26042\\
12	52148\\
13	104370\\
14	208746\\
};
\addlegendentry{refinement}

\end{axis}

\begin{axis}[%
width=0.35\textwidth,
height=1.2in,
at={(4in,9.068in)},
scale only axis,
%xmode=log,
xmin=1,
xmax=27,
xminorticks=true,
xlabel style={font=\color{white!15!black}},
xlabel={\small refinement/coarsening steps},
ymode=log,
ymin=1,
ymax=1000000,
yminorticks=true,
ylabel style={font=\color{white!15!black}},
ylabel={\small number of elements},
axis background/.style={fill=white},
title style={font=\bfseries},
title={TcoarsenRGB},
legend style={at={(0.5,0.35)}, anchor=north west, legend cell align=left, align=left, draw=white!15!black}
]
\addplot [color=mycolor1, mark=o, mark options={solid, mycolor1}]
  table[row sep=crcr]{%
1	4\\
2	6\\
3	12\\
4	19\\
5	48\\
6	70\\
7	143\\
8	203\\
9	359\\
10	486\\
11	804\\
12	1062\\
13	1701\\
14	2224\\
15	3533\\
16	4581\\
17	7159\\
18	9253\\
19	14353\\
20	18451\\
21	28528\\
22	36512\\
23	56212\\
24	71442\\
25	108413\\
26	133691\\
27	202582\\
};
\addlegendentry{coarsening}

\addplot [color=mycolor2, mark=*, mark options={solid, mycolor2}]
  table[row sep=crcr]{%
1	4\\
2	13\\
3	39\\
4	123\\
5	297\\
6	693\\
7	1482\\
8	3085\\
9	6239\\
10	12597\\
11	25221\\
12	50589\\
13	101226\\
14	202582\\
};
\addlegendentry{refinement}

\end{axis}

\begin{axis}[%
width=0.35\textwidth,
height=1.2in,
at={(0.7in,7.068in)},
scale only axis,
%xmode=log,
xmin=1,
xmax=15,
xminorticks=true,
xlabel style={font=\color{white!15!black}},
xlabel={\small refinement/coarsening steps},
ymode=log,
ymin=1,
ymax=1000000,
yminorticks=true,
ylabel style={font=\color{white!15!black}},
ylabel={\small number of elements},
axis background/.style={fill=white},
title style={font=\bfseries},
title={TcoarsenRG},
legend style={at={(0.5,0.35)}, anchor=north west, legend cell align=left, align=left, draw=white!15!black}
]
\addplot [color=mycolor1, mark=o, mark options={solid, mycolor1}]
  table[row sep=crcr]{%
1	4\\
2	16\\
3	51\\
4	152\\
5	399\\
6	944\\
7	2060\\
8	4299\\
9	8710\\
10	17385\\
11	34116\\
12	65429\\
13	118528\\
14	187419\\
};
\addlegendentry{coarsening}

\addplot [color=mycolor2, mark=*, mark options={solid, mycolor2}]
  table[row sep=crcr]{%
1	4\\
2	11\\
3	37\\
4	102\\
5	274\\
6	622\\
7	1315\\
8	2777\\
9	5736\\
10	11597\\
11	23186\\
12	46655\\
13	93533\\
14	187419\\
};
\addlegendentry{refinement}

\end{axis}

\begin{axis}[%
width=0.35\textwidth,
height=1.2in,
at={(4in,7.068in)},
scale only axis,
%xmode=log,
xmin=1,
xmax=15,
xminorticks=true,
xlabel style={font=\color{white!15!black}},
xlabel={\small refinement/coarsening steps},
ymode=log,
ymin=1,
ymax=1000000,
yminorticks=true,
ylabel style={font=\color{white!15!black}},
ylabel={\small number of elements},
axis background/.style={fill=white},
title style={font=\bfseries},
title={TcoarsenR},
legend style={at={(0.5,0.35)}, anchor=north west, legend cell align=left, align=left, draw=white!15!black}
]
\addplot [color=mycolor1, mark=o, mark options={solid, mycolor1}]
  table[row sep=crcr]{%
1	4\\
2	16\\
3	49\\
4	136\\
5	328\\
6	730\\
7	1537\\
8	3133\\
9	6289\\
10	12490\\
11	24568\\
12	47089\\
13	85840\\
14	137515\\
};
\addlegendentry{coarsening}

\addplot [color=mycolor2, mark=*, mark options={solid, mycolor2}]
  table[row sep=crcr]{%
1	4\\
2	10\\
3	34\\
4	85\\
5	211\\
6	475\\
7	1021\\
8	2095\\
9	4246\\
10	8551\\
11	17152\\
12	34369\\
13	68737\\
14	137515\\
};
\addlegendentry{refinement}

\end{axis}

\begin{axis}[%
width=0.35\textwidth,
height=1.2in,
at={(0.7in,5.068in)},
scale only axis,
%xmode=log,
xmin=1,
xmax=15,
xminorticks=true,
xlabel style={font=\color{white!15!black}},
xlabel={\small refinement/coarsening steps},
ymode=log,
ymin=1,
ymax=1000000,
yminorticks=true,
ylabel style={font=\color{white!15!black}},
ylabel={\small number of elements},
axis background/.style={fill=white},
title style={font=\bfseries},
title={QcoarsenR},
legend style={at={(0.5,0.35)}, anchor=north west, legend cell align=left, align=left, draw=white!15!black}
]
\addplot [color=mycolor1, mark=o, mark options={solid, mycolor1}]
  table[row sep=crcr]{%
1	2\\
2	8\\
3	26\\
4	83\\
5	218\\
6	500\\
7	1064\\
8	2204\\
9	4448\\
10	8852\\
11	17300\\
12	32948\\
13	58778\\
14	89144\\
};
\addlegendentry{coarsening}

\addplot [color=mycolor2, mark=*, mark options={solid, mycolor2}]
  table[row sep=crcr]{%
1	2\\
2	5\\
3	20\\
4	50\\
5	140\\
6	305\\
7	623\\
8	1322\\
9	2762\\
10	5534\\
11	11036\\
12	22202\\
13	44486\\
14	89144\\
};
\addlegendentry{refinement}

\end{axis}

\begin{axis}[%
width=0.35\textwidth,
height=1.2in,
at={(4in,5.068in)},
scale only axis,
%xmode=log,
xmin=1,
xmax=15,
xminorticks=true,
xlabel style={font=\color{white!15!black}},
xlabel={\small refinement/coarsening steps},
ymode=log,
ymin=1,
ymax=1000000,
yminorticks=true,
ylabel style={font=\color{white!15!black}},
ylabel={\small number of elements},
axis background/.style={fill=white},
title style={font=\bfseries},
title={QcoarsenRB},
legend style={at={(0.5,0.35)}, anchor=north west, legend cell align=left, align=left, draw=white!15!black}
]
\addplot [color=mycolor1, mark=o, mark options={solid, mycolor1}]
  table[row sep=crcr]{%
1	2\\
2	8\\
3	30\\
4	95\\
5	289\\
6	693\\
7	1533\\
8	3245\\
9	6673\\
10	13393\\
11	26485\\
12	51343\\
13	95861\\
14	163687\\
};
\addlegendentry{coarsening}

\addplot [color=mycolor2, mark=*, mark options={solid, mycolor2}]
  table[row sep=crcr]{%
1	2\\
2	7\\
3	24\\
4	78\\
5	219\\
6	521\\
7	1111\\
8	2371\\
9	5055\\
10	10145\\
11	20213\\
12	40823\\
13	81595\\
14	163687\\
};
\addlegendentry{refinement}

\end{axis}

\begin{axis}[%
width=0.35\textwidth,
height=1.2in,
at={(2.35in,3.068in)},
scale only axis,
%xmode=log,
xmin=1,
xmax=15,
xminorticks=true,
xlabel style={font=\color{white!15!black}},
xlabel={\small refinement/coarsening steps},
ymode=log,
ymin=1,
ymax=1000000,
yminorticks=true,
ylabel style={font=\color{white!15!black}},
ylabel={\small number of elements},
axis background/.style={fill=white},
title style={font=\bfseries},
title={QcoarsenRG},
legend style={at={(0.5,0.35)}, anchor=north west, legend cell align=left, align=left, draw=white!15!black}
]
\addplot [color=mycolor1, mark=o, mark options={solid, mycolor1}]
  table[row sep=crcr]{%
1	2\\
2	8\\
3	30\\
4	96\\
5	281\\
6	683\\
7	1511\\
8	3187\\
9	6509\\
10	13067\\
11	25817\\
12	50073\\
13	93121\\
14	0\\
};
\addlegendentry{coarsening}

\addplot [color=mycolor2, mark=*, mark options={solid, mycolor2}]
  table[row sep=crcr]{%
1	2\\
2	7\\
3	24\\
4	78\\
5	213\\
6	504\\
7	1084\\
8	2321\\
9	4829\\
10	9749\\
11	19541\\
12	39329\\
13	78839\\
14	157931\\
};
\addlegendentry{refinement}

\end{axis}

%\begin{axis}[%
%width=\textwidth,
%height=\textheight,
%at={(0in,0in)},
%scale only axis,
%xmin=0,
%xmax=1,
%ymin=0,
%ymax=1,
%axis line style={draw=none},
%ticks=none,
%axis x line*=bottom,
%axis y line*=left
%]
%\end{axis}
\end{tikzpicture}%
\caption{Refinement/Coarsening steps vs.~number of elements. In coarsening, the data points start from a high number of elements and decrease with the number of coarsening steps, whereas in refinement, the opposite is true. For a better comparison between refinement and coarsening steps, a reversed data set for coarsening is plotted. We see that {\TNVB} and {\TRGB} use more coarsening steps than the refinement. It is typical for these strategies that twice as many coarsening steps are needed as refinement steps. The hierarchical structure of red meshes ensures an identical number of refinement/coarsening steps. The coarsening is nevertheless more restrained in the intermediate steps than in the refinement. This is not surprising, as coarsening can be blocked in the $\clos$ step, whereas further refinements are made in the refinement step to guarantee 1-irregularity and the $d$-Neighbor Rule. Un-/Closure of the meshes with green or blue patterns does not change this behavior. }
\label{fig:refvscoars}
\end{figure}

{\TRGB} and {\TNVB} require more coarsening steps than refinement steps. This can easily be explained by the fact that a blue, bisec(3) and possibly a red refinement is coarsened in a two-step process that reverses green refinements. In the refinement process, these patterns were introduced within one refinement step. This leads to the difference in the number of refinement and coarsening steps. A more detailed discussion is given in \cite{RGB}.

In the toolbox, some examples are implemented that present our coarsening algorithm in different contexts. In this work, we restrict ourselves to present local coarsening implemented in \texttt{example4/} because the presentation of moving circles, singularities or a triangulation of an image sequence is not suitable for a presentation in a paper. The interested reader should call up the sample implementations to convince himself of the functionality of the toolbox. In \texttt{example4/} a mesh is first refined uniformly and with a local marking strategy elements are coarsened only in a specific region, see Figure~\ref{fig:localcoarsening}.

\begin{figure}

\begin{minipage}{0.4\textwidth}
\centering \small \bfseries TcoarsenNVB\\
\vspace*{1ex}
\includegraphics[width=\textwidth]{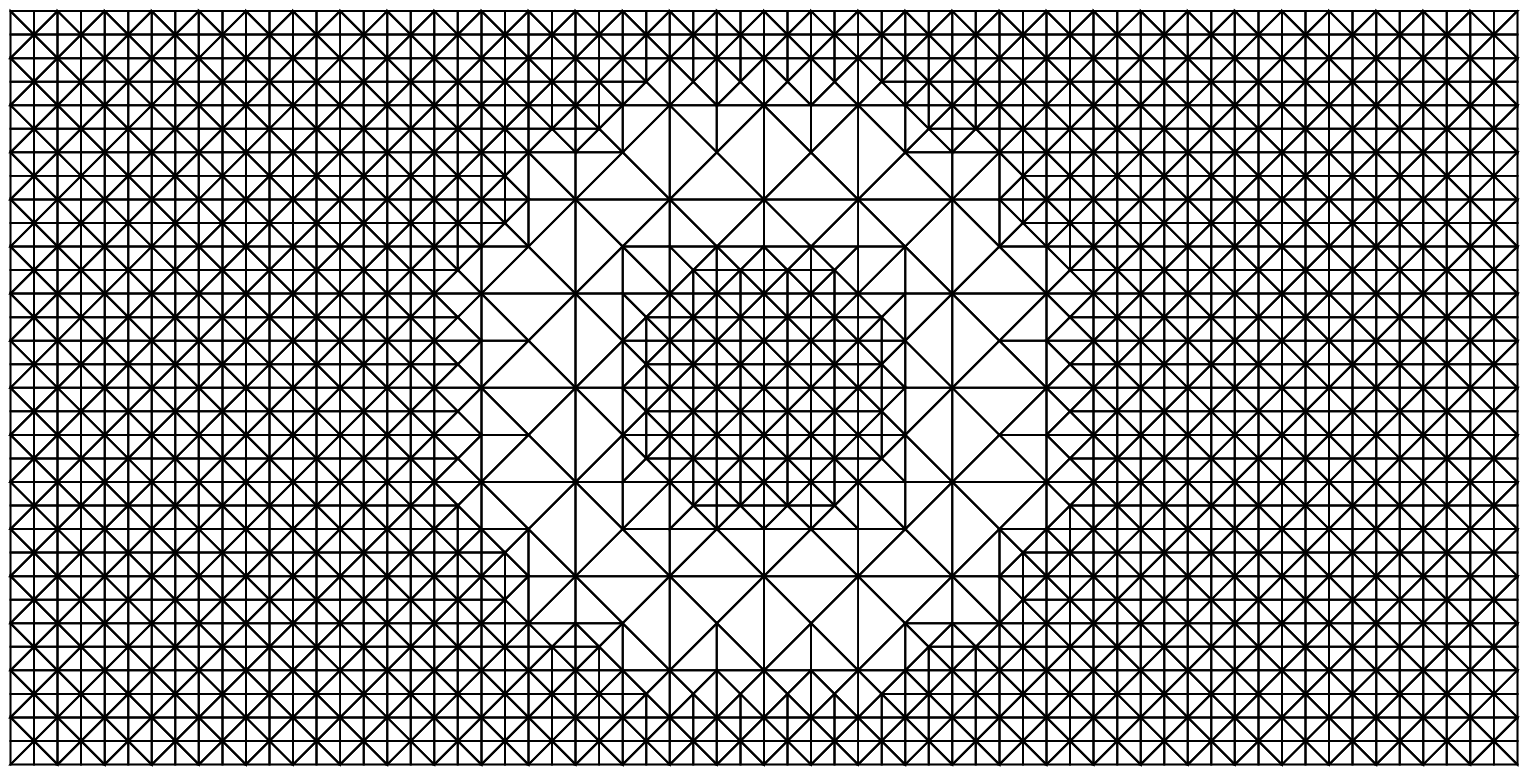}
\end{minipage}\hspace*{2ex}
\begin{minipage}{0.4\textwidth}
\centering \small \bfseries TcoarsenRGB\\
\vspace*{1ex}
\includegraphics[width=\textwidth]{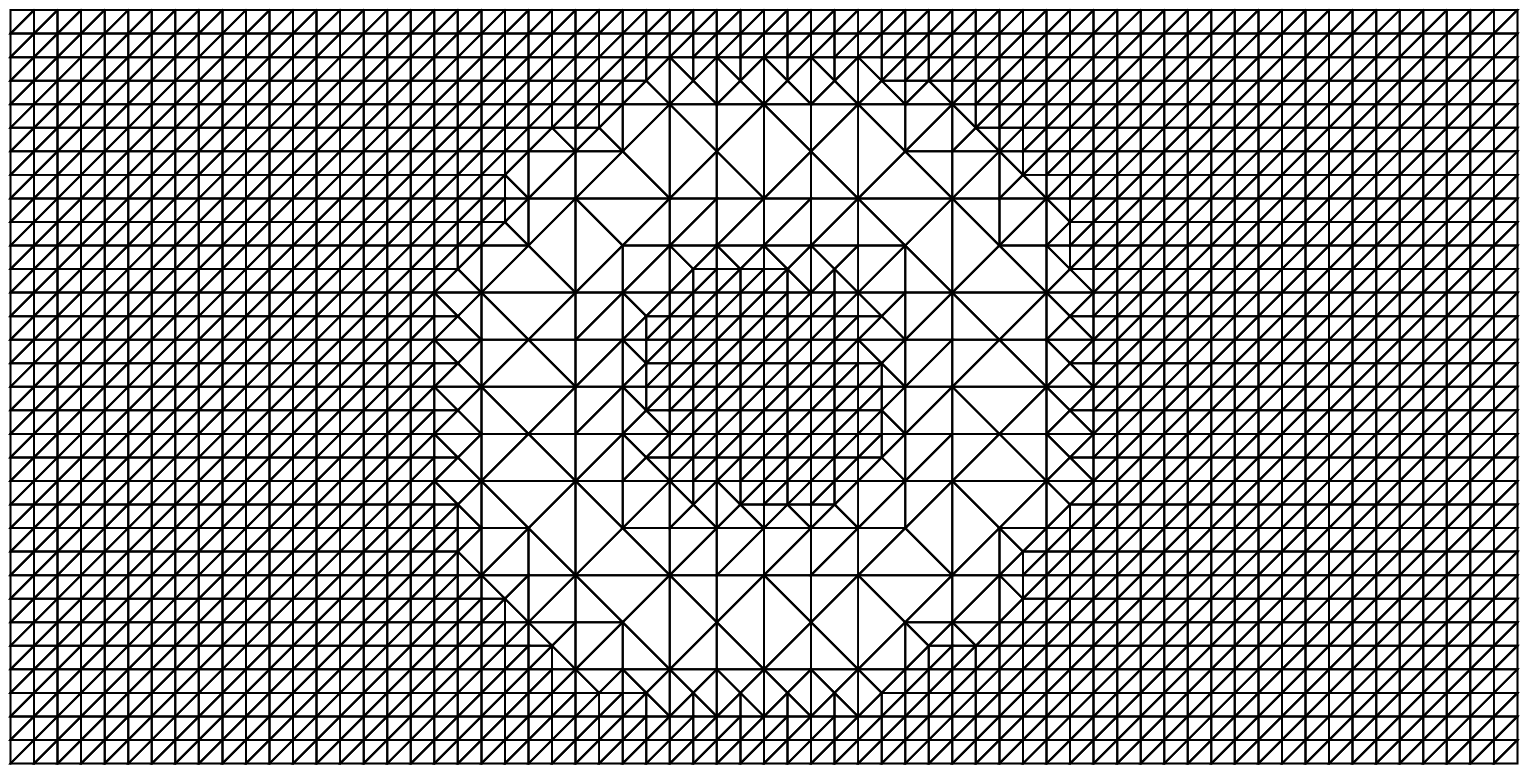}
\end{minipage}\\
\vspace*{1ex}
\begin{minipage}{0.4\textwidth}
\centering \small \bfseries TcoarsenRG\\
\vspace*{1ex}
\includegraphics[width=\textwidth]{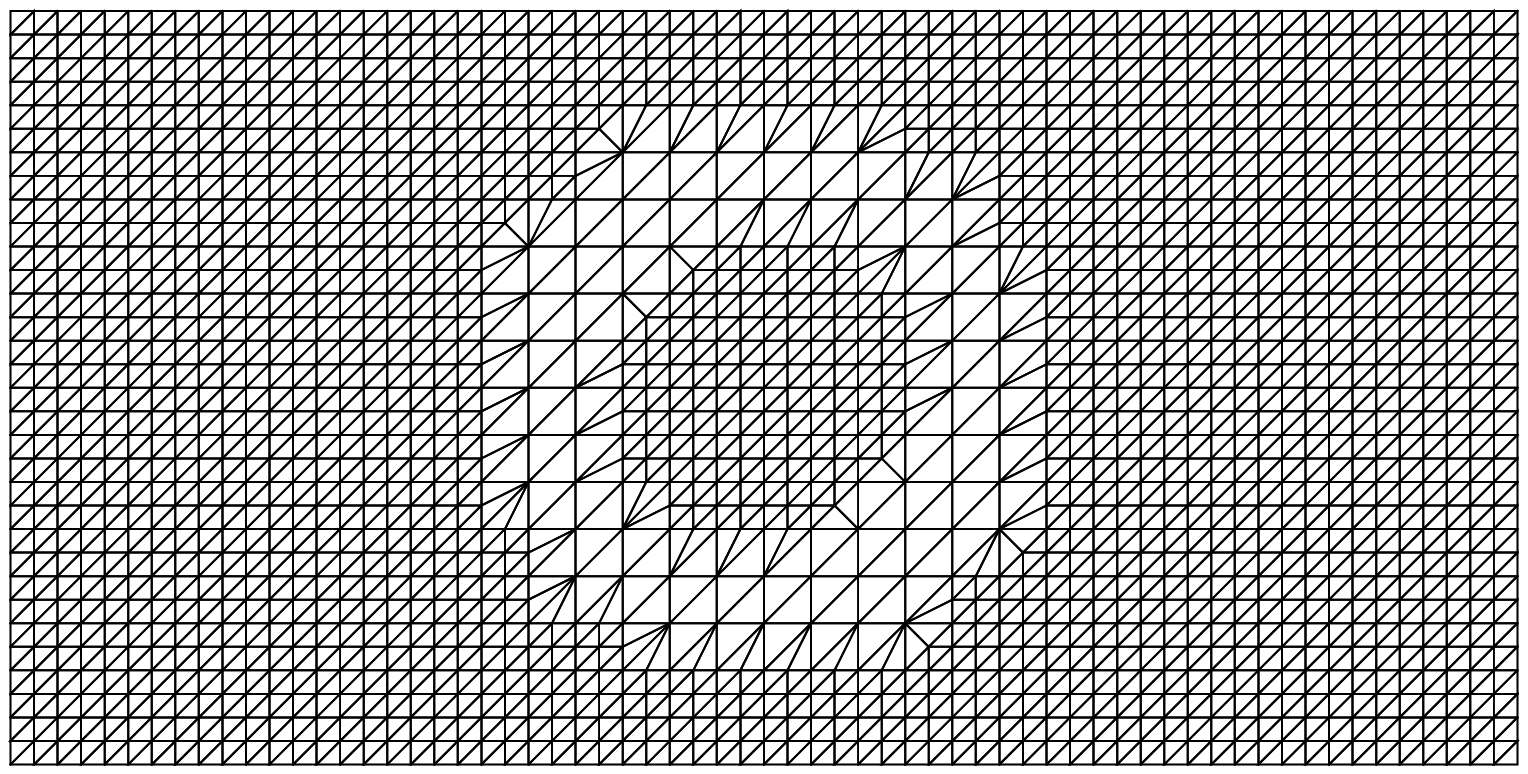}
\end{minipage}\hspace*{2ex}
\begin{minipage}{0.4\textwidth}
\centering \small \bfseries TcoarsenR\\
\vspace*{1ex}
\includegraphics[width=\textwidth]{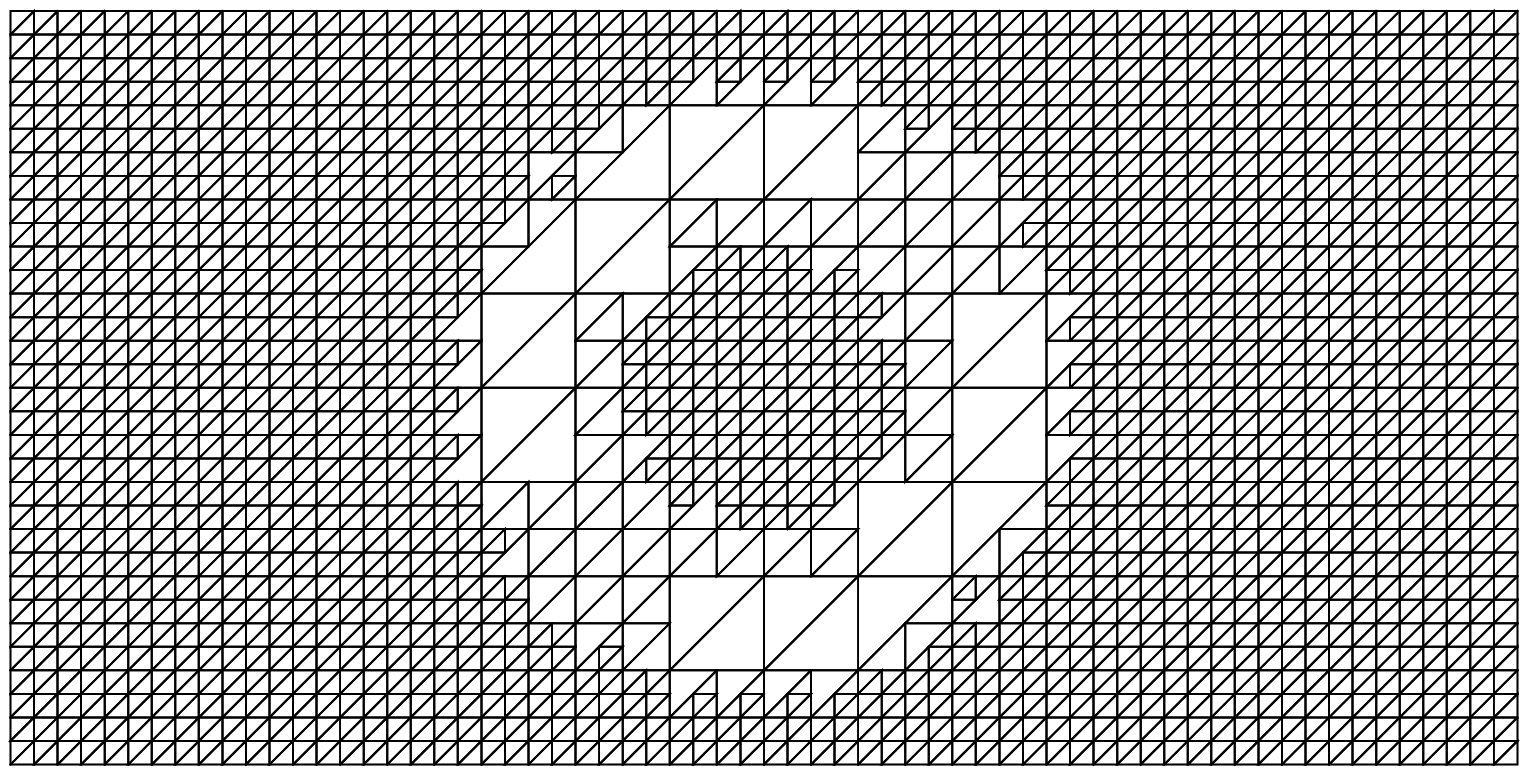}
\end{minipage}\\
\vspace*{1ex}
\begin{minipage}{0.4\textwidth}
\centering \small \bfseries QcoarsenR\\
\vspace*{1ex}
\includegraphics[width=\textwidth]{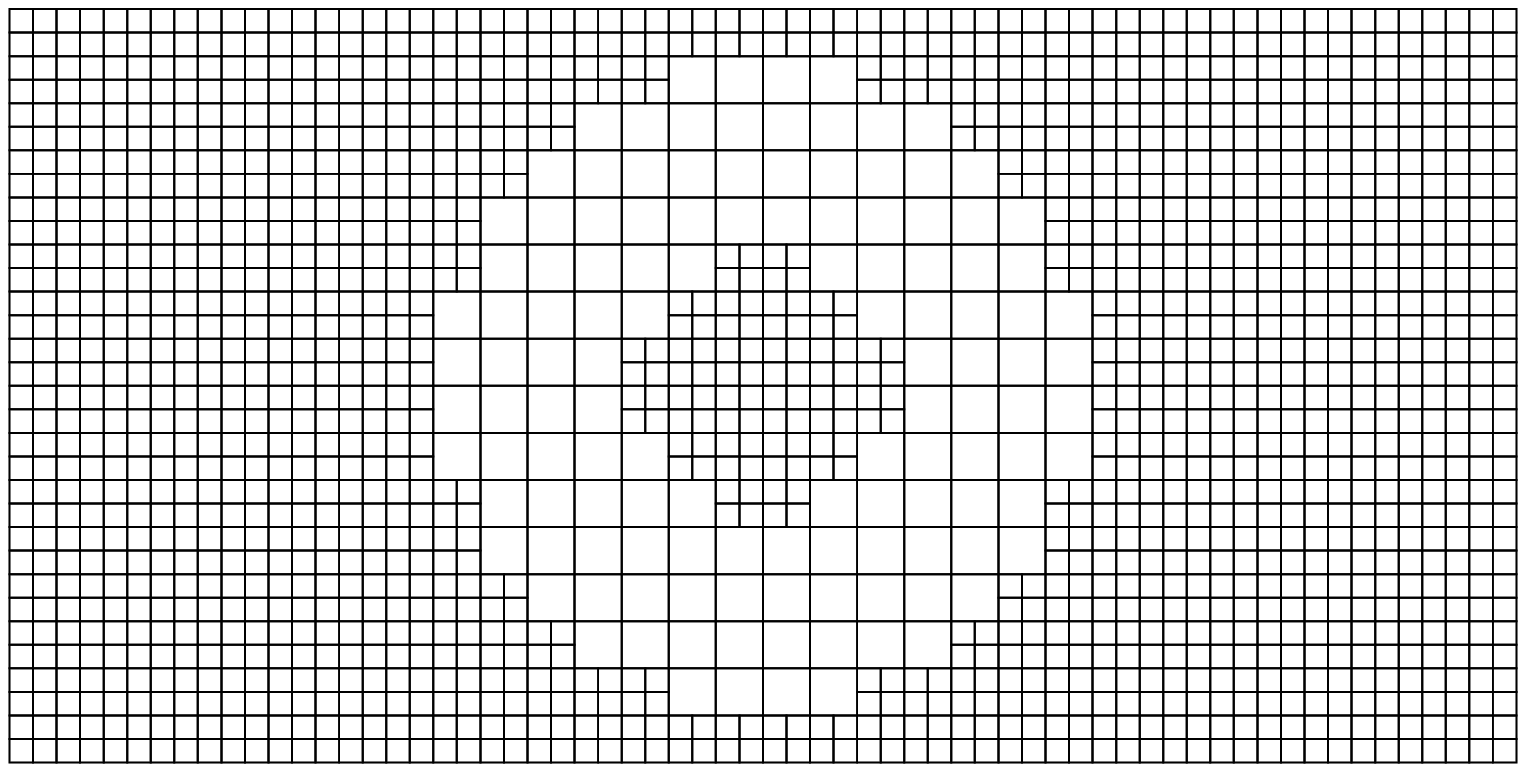}
\end{minipage}\hspace*{2ex}
\begin{minipage}{0.4\textwidth}
\centering \small \bfseries QcoarsenRB\\
\vspace*{1ex}
\includegraphics[width=\textwidth]{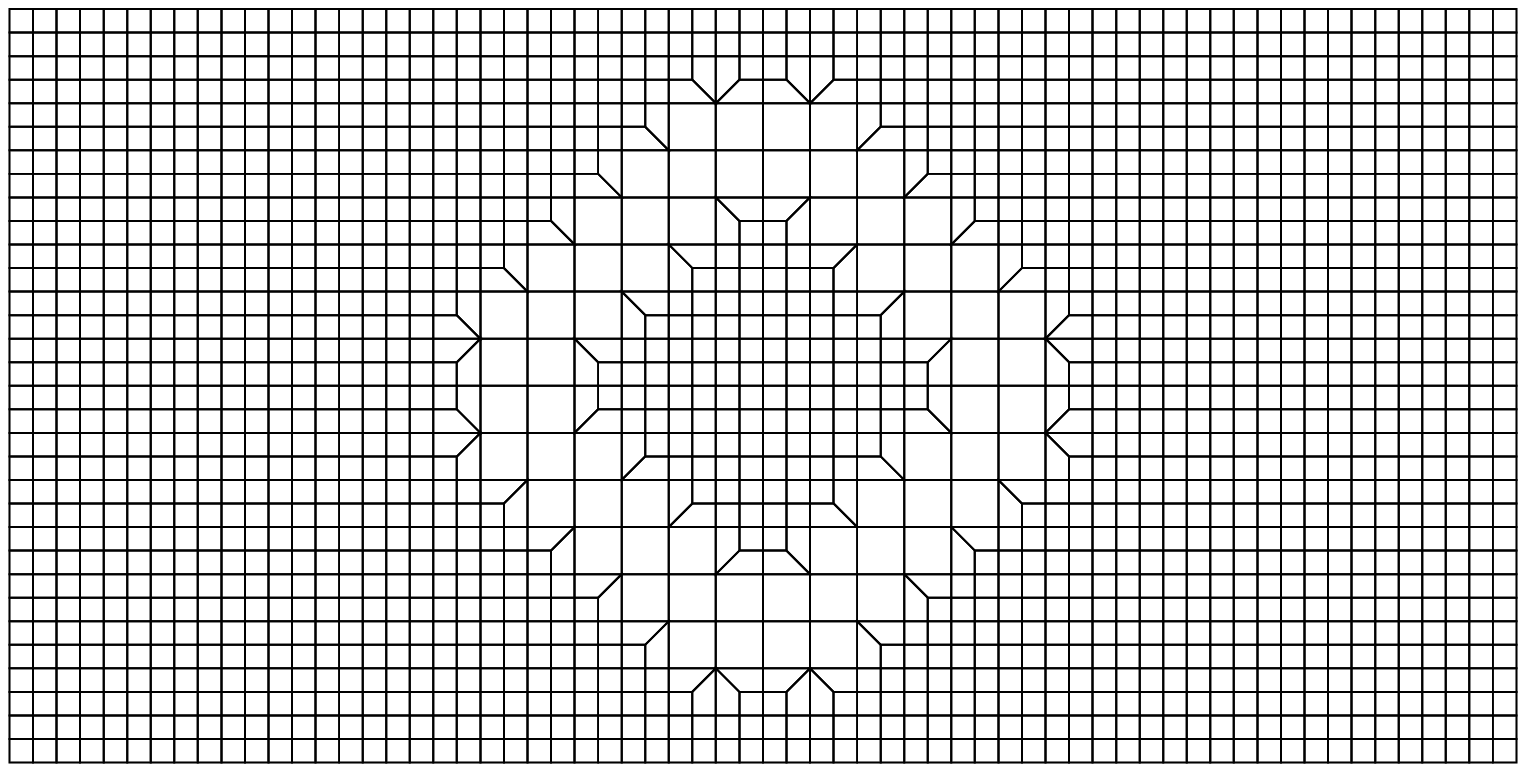}
\end{minipage}\\
\vspace*{1ex}
\centering \begin{minipage}{0.4\textwidth}
\centering \small \bfseries QcoarsenRG\\
\vspace*{1ex}
\includegraphics[width=\textwidth]{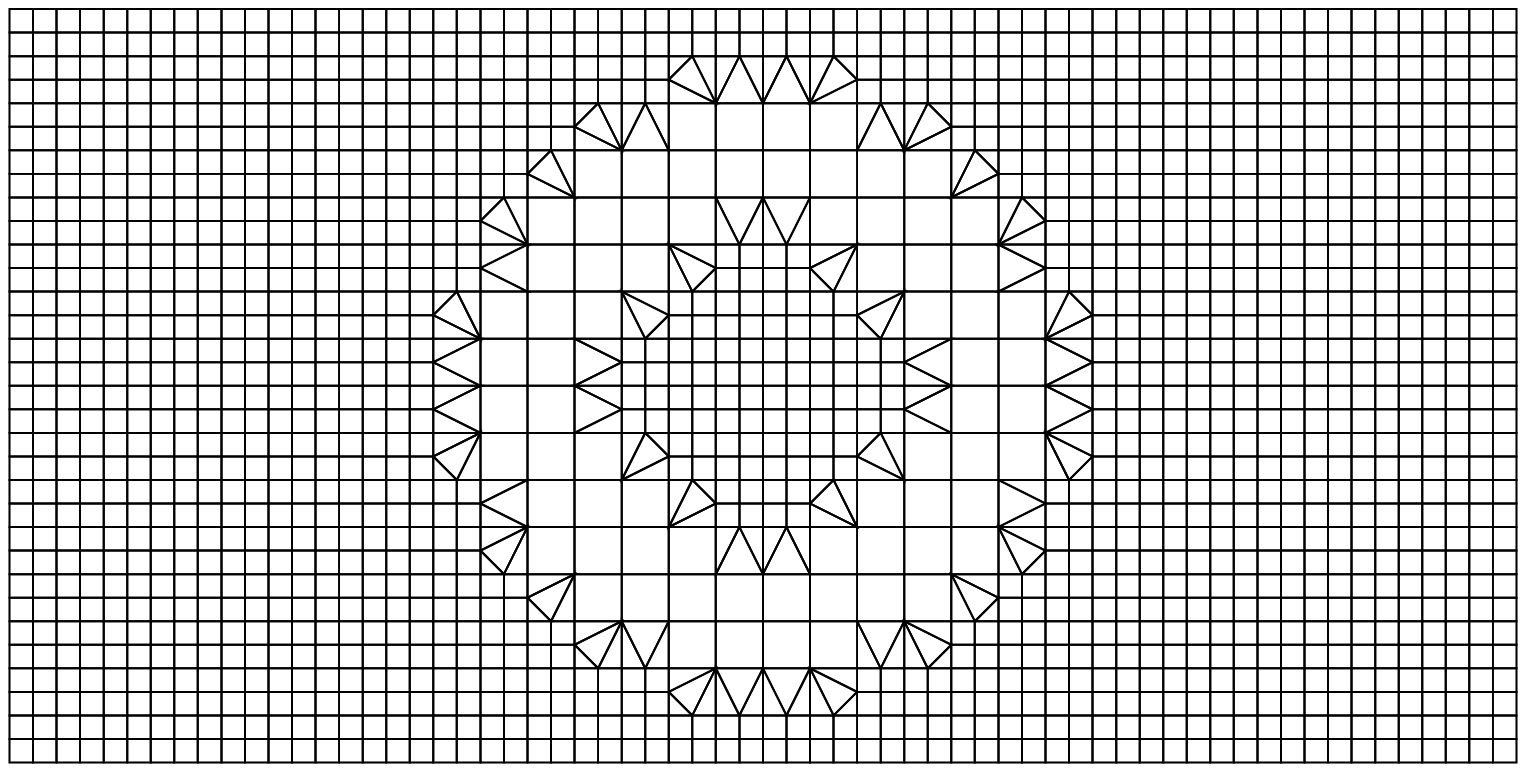}
\end{minipage}
\caption{Local Coarsening. A uniformly refined mesh is coarsened locally in the shape of a circle. The resulting meshes are shown for each mesh coarsening strategy.}
\label{fig:localcoarsening}
\end{figure}

\FloatBarrier
\bibliographystyle{acm}
\bibliography{references}
\end{document}